\newtheorem{theorem}{Theorem}
\newaliascnt{proposition}{theorem}
\newtheorem{proposition}[proposition]{Proposition}
\newaliascnt{lemma}{theorem}
\newtheorem{lemma}[lemma]{Lemma}
\newaliascnt{example}{theorem}
\newtheorem{example}[example]{Example}
\newaliascnt{remark}{theorem}
\newtheorem{remark}[remark]{Remark}
\newaliascnt{corollary}{theorem}
\newaliascnt{definition}{theorem}
\newtheorem{definition}[definition]{Definition}
\newcounter{hypA'}
\newenvironment{hyp}[1]{\begin{sf}\refstepcounter{hyp#1}\begin{itemize}
  \item[({\bf #1\arabic{hyp#1}})]}{\end{itemize}\end{sf}}
\newenvironment{hyp*}[1]{\begin{sf}\refstepcounter{hyp#1}\begin{itemize}
  \item[({\bf #1\arabic{hyp#1}})]}{\end{itemize}\end{sf}}
\newcommand{\refhyp}[2][A]{$\mathbf{(#1\ref{#2})}$}
\newcommand{\refhyps}[3][A]{$\mathbf{(#1\ref{#2}\-\ref{#3})}$}
\renewcommand{\-}{\mbox{-}}
\newenvironment{subproof}[1]
{\textbf{\em #1}.}{\hfill $\blacktriangleleft$ \newline \indent}
\def\rme{\mathrm{e}}
\def\Xset{\mathsf{X}}
\def\Yset{\mathsf{Y}}
\def\Eset{\mathsf{E}}
\def\Xsigma{\mathcal{X}}
\def\Ysigma{\mathcal{Y}}
\def\Esigma{\mathcal{E}}
\def\rset{\ensuremath{\mathbb{R}}}
\def\nset{\ensuremath{\mathbb{N}}}
\def\zset{\ensuremath{\mathbb{Z}}}
\def\mcf{\mathcal{F}}
\def\mcm{\mathcal{M}}
\def\mcp{\mathcal{P}}
\def\mcn{\mathcal{N}}
\def\eqsp{\;}
\def\PE{\mathbb{E}}
\def\PP{\mathbb{P}}
\def\rmd{\mathrm{d}}
\newcommandx\LogInt[5][1=\theta,4=,5=Y]{\upsilon_{#4}^{#1}\langle {#5}_{#2:#3} \rangle}
\newcommand{\ball}[2]{\mathsf{B}(#1, #2)}
\newcommand{\wrt}{with respect to}
\newcommand{\rhs}{right-hand side}
\newcommand{\as}{\mbox{-a.s.}}
\newcommand{\ie}{i.e.}
\newcommandx{\aslim}[1]{\ensuremath{\stackrel{#1-\text{a.s.}}{\longrightarrow}}}
\newcommandx\sequence[3][2=n,3=\zset]{\ensuremath{\{ #1_{#2} \}_{#2 \in #3}}}
\newcommand{\CPE}[3][]
{\ifthenelse{\equal{#1}{}}%
{\mathbb{E}\left[\left. #2 \, \right| #3 \right]}
{\mathbb{E}_{#1}\left[\left. #2 \, \right| #3 \right]}
}
\newcommand{\CPEu}[3][]
{\ifthenelse{\equal{#1}{}}%
{\mathbb{E}\left[\left. #2 \, \right| #3 \right]}
{\mathbb{E}^{#1}\left[\left. #2 \, \right| #3 \right]}
}
\newcommand{\CPEv}[3][]
{\ifthenelse{\equal{#1}{}}%
{\mathbb{E}_\star\left[\left. #2 \, \right| #3 \right]}
{\mathbb{E}_\star_{#1}\left[\left. #2 \, \right| #3 \right]}
}
\newcommand{\CPP}[3][]
{\ifthenelse{\equal{#1}{}}%
{\mathbb{P}\left[\left. #2 \, \right| #3 \right]}
{\mathbb{P}_{#1}\left[\left. #2 \, \right| #3 \right]}
}
\newcommand{\CPPu}[3][]
{\ifthenelse{\equal{#1}{}}%
{\mathbb{P}\left[\left. #2 \, \right| #3 \right]}
{\mathbb{P}^{#1}\left[\left. #2 \, \right| #3 \right]}
}
\newcommandx{\chunk}[3]%
{\ensuremath{#1}_{#2:#3}}
\def\1{\mathbbm{1}}
\newcommandx\proj[2][1=,2=]{
\ifthenelse{\equal{#1}{}}
{\operatorname{X}}
{\operatorname{X}_{#1:#2}}
}
\newcommand{\eqdef}{:=}
\newcommand{\set}[2]{\left\{#1\, : \, #2\right\}}
\newcommand{\ensemble}[1]{\mathsf{#1}}
\newcommand{\shift}{\ensuremath{\operatorname{S}}}
\newcommandx\lkdM[3][1=,3=]{
\ifthenelse{\equal{#2}{}}
{ \mathsf{L}_{#1}^{#3}}
{ \mathsf{L}_{#1}^{#3}\langle #2\rangle}
}
\newcommandx\lkdMStat[3][1=,3=]{
\ifthenelse{\equal{#2}{}}
{ \bar{\mathsf{L}}_{#1}^{#3}}
{ \bar{\mathsf{L}}_{#1}^{#3}\langle #2 \rangle}
}
\newcommandx\lkd[3][1=,3=]{
\ifthenelse{\equal{#2}{}}
{ \ell_{#1}^{#3}}
{ \ell_{#1}^{#3}\langle #2\rangle}
}
\newcommandx\lkdStat[3][1=,3=]{
\ifthenelse{\equal{#2}{}}
{ \bar \ell_{#1}^{#3}}
{ \bar \ell_{#1}^{#3}\langle #2 \rangle}
}
\newcommand{\mlStat}[1]{\bar \theta_{#1}}
\newcommand{\ml}[1]{\theta_{#1}}
\newcommand{\mlY}[1]{\theta_{#1}}
\newcommand{\argmax}{\mathrm{argmax}}
\newcommand{\existLim}[1]{\to_{#1}}
\newcommandx{\normLip}[2][1=]{\mathrm{Lip}(#2;#1)}
\newcommandx{\wass}[2][1]{\lVert #2\rVert_{#1}}
\newcommandx{\wasser}[3][1=]{\mathcal{W}_{#1}\left(#2,#3\right)}
\newcommandx{\proho}[3][1=]{\mathcal{P}_{#1}\left(#2,#3\right)}
\newcommandx{\dobru}[2][1=]{\dobrush_{#1}\left( #2\right)}
\newcommand{\dobrush}{\Delta}
\newcommandx{\Pcan}[2][1=,2=]{\mathbb{P}_{#1}^{#2}}
\newcommandx{\Ecan}[2][1=,2=]{\mathbb{E}_{#1}^{#2}}
\newcommand{\Xinit}{\xi}
\newcommandx\cesp[4][1=,2=]{\ensuremath{{\mathbb E}_{#1}^{#2}\left[ \left. #3 \right| #4 \right]}}
\newcommandx{\f}[2][1=\theta]{f^{#1}\langle #2 \rangle}
\newcommand{\thv}{{\theta_\star}}
\begin{document}

%
\begin{frontmatter}



\title{Ergodicity of observation-driven time series models and consistency of the maximum likelihood estimator}
\author[tsp]{R. Douc}
\address[tsp]{Department  CITI, CNRS UMR 5157, Telecom Sudparis, Evry. France.}
\ead{randal.douc@telecom-sudparis.eu}

\author[uc]{P. Doukhan}
\address[uc]{Department of mathematics, University of Cergy-Pontoise, France.}
\ead{doukhan@u-cergy.fr}

\author[tpt]{E. Moulines}
\address[tpt]{Department  LTCI, CNRS UMR 5141, Telecom Paristech, Paris. France.}
\ead{moulines@telecom-paristech.fr}

\journal{Stochastic Processes and their applications}
\begin{keyword}
consistency, ergodicity,  time series of counts, maximum likelihood, observation-driven models, stationarity.
\end{keyword}


\begin{abstract}
This paper deals with a general class of observation-driven time series models with a special focus on time series of counts. We provide conditions under which there exist strict-sense stationary and ergodic versions of such processes. The consistency of the maximum
likelihood estimators is then derived for well-specified and misspecified models.
\end{abstract}

\end{frontmatter}
There has recently been a strong renewed interest in developing models for time series of counts which arise in a wide variety of applications:  economics, finance, epidemiology, population dynamics...
Among the models proposed so far,  observation-driven models introduced by \cite{cox:1981} plays an important role (see \cite[Chapter 4]{kedem:fokianos:2002} for a comprehensive account and \cite{tjostheim:2012} for a recent survey). In time series of counts, the observations are the realisations of some integer-valued distribution (e.g. Poisson, negative binomial, ...) depending on some parameters that drives the dynamic of the model. In this paper, we focus on the so-called observation-driven time series models in which the parameter depends solely the past observations. Examples of such models include Poisson integer-valued GARCH (INGARCH) (see \cite{ferland:latour:oraichi:2006} or \cite{zhu:2012}, \cite{fokianos:rahbek:tjostheim:2009}), Poisson threshold models (see \cite{henderson:matteson:woodard:2011}), log-linear Poisson autoregression (see \cite{fokianos:tjostheim:2011}); see also  \cite{davis:dunsmuir:streett:2003}, \cite{davis:liu:2012} and \cite{neuman:2011}  for other observation-driven  models for Poisson counts.

This paper discuses the theory and inference for a general class of observation-driven models which includes the models introduced above as particular examples. Compared to the approach introduced in \cite{fokianos:tjostheim:2011}, our argument is not based on the so-called perturbation technique. Recall that this technique consists in two steps: in a first step, a perturbed version of the process is shown to be geometrically ergodic, in a second step, the perturbed process is shown to converge to the original one by letting the perturbation goes to $0$. These two steps make it possible to develop a likelihood theory on the perturbed process and then to take the limit. As argued by \cite{doukhan:2012}, this approximation technique might seem unnatural and is technically involved. In addition, it heavily  relies on the Poisson assumption. The approach developed by \cite{neuman:2011} is more direct but is based on a contraction assumption on the intensity of the Poisson variable which is not satisfied, for example neither in the log-linear Poisson autoregression model nor in  the Poisson threshold model. We do not follow the weak dependence approach which as outlined in \cite{doukhan:fokianos:tjostheim:2012} also implies unnecessary Lipshitz assumptions of the model and does not yield directly a theory for likelihood inference. Those authors apply  \cite{doukhan:wintenberger:2008} results; the latter use a contraction argument also adapted to deal with more general infinite memory models which essentially extends on assumptions \eqref{eq:contrac} below relative to the current Markov case. Those authors also derived weak dependence conditions for such models; we should anyway quote that such Taylor-made dependence conditions do not allow as performing results as the present techniques.

Our approach is based on the theory of Markov chains without irreducibility assumption. We first prove the existence of a stationary distribution using the result of \cite{tweedie:1988}. The main difficulty when the Markov chain is not necessarily irreducible consists in proving the uniqueness of the stationary distribution. For that purpose, we extend the delicate argument introduced by \cite{henderson:matteson:woodard:2011} and based on the theory of asymptotically strong Feller Markov chains (see \cite{hairer:mattingly:2006}).  Our extension introduces a drift term which adds considerable flexibility on the model assumptions and allow to cover the log-linear Poisson autoregression model under assumptions which are weaker than those reported in \cite{fokianos:tjostheim:2011}. We then establish ergodicity for the two-sided stationary version of the process under the sole assumption of existence and uniqueness of the stationary distribution. Finally, we develop the theory of likelihood inference by approximating the conditional likelihood by an appropriately defined stationary version of it, which is shown to converge using classical ergodic theory arguments. Our likelihood inference theory covers both well-specified and misspecified models. We focus on the consistency of the conditional likelihood estimator but the asymptotic normality can also be covered using stationary martingale arguments. Due to space constraints, this will be reported in a forthcoming paper.

The organization of the paper is as follows. \autoref{sec:exist:unic} formulates the model, establishes
the existence and uniqueness of the invariant distribution and shows the ergodicity and existence of some moments for the observation process.
The maximum likelihood estimates of the parameters and the relevant asymptotic theory are then derived in \autoref{sec:consist}. Examples of threshold autoregressive and log Poisson counts are used to illustrate our findings.
The proofs are given in \autoref{sec:proof}. Finally, the Appendix contains general statements about the ergodicity of Markov chains under minimal assumptions which might be of independent interest.
\section{Ergodicity of the Observation-driven time series model} \label{sec:exist:unic}
Let $(\Xset,d)$ be a locally compact, complete
and separable metric space and denote by $\Xsigma$ the associated Borel sigma-field. Let $(\Yset,\Ysigma)$ be a measurable space, $H$ a Markov kernel from $(\Xset,\Xsigma)$ to $(\Yset,\Ysigma)$ and $(x,y) \mapsto f_y(x)$ a measurable function from $(\Xset\times \Yset, \Xsigma \otimes \Ysigma)$ to $(\Xset, \Xsigma)$.
\begin{definition} \label{defi:odts:model}
An {\em observation-driven time series model on $\nset$} is a stochastic process $\{(X_n,Y_n)\, , \, n \in \nset\}$ on $\Xset \times \Yset$ satisfying the following recursions: for all $k \in \nset$,
\begin{align}
& Y_{k+1}|\mcf_k\sim H(X_k;\cdot)\eqsp,\nonumber \\
& X_{k+1}=f_{Y_{k+1}}(X_k)\eqsp, \label{eq:def:XY}
\end{align}
where $\mcf_k=\sigma(X_\ell,\, Y_\ell\, ; \, \ell \leq k\,, \ell \in \nset)$.
Similarly, $\{(X_n,Y_n)\, , \, n \in \zset\}$ is an {\em observation-driven time series model on $\zset$} if the previous recursion holds for all $k \in\zset$ with $\mcf_k=\sigma(X_\ell,\, Y_\ell\, ; \, \ell \leq k\,, \ell \in \zset)$.
\end{definition}

Observation-driven time series models have been introduced by \cite{cox:1981} and later considered by \cite{streett:2000}, \cite{davis:dunsmuir:streett:2003}, \cite{fokianos:rahbek:tjostheim:2009}, \cite{neuman:2011} and \cite{doukhan:fokianos:tjostheim:2012}.

In an \emph{observation-driven time series} model, \sequence{Y}[n][\nset] are observed whereas \sequence{X}[n][\nset] are not observed. This model shares similarities with Hidden Markov Models, the main difference lying in the fact that given $X_0$ and $k$ successive observations $Y_0,\ldots,Y_k$, \eqref{eq:def:XY} allows to compute $X_k$. In the following, the notation $\chunk{u}{s}{t}$ stands for $(u_s,\ldots,u_t)$ for $s\leq t$.
\begin{example}\label{example:garch:defi} The GARCH(1,1) model defined by
\begin{align*}
&Y_{k+1}|\chunk{\sigma^2}{0}{k},\chunk{Y}{0}{k} \sim \mcn(0,\sigma_k^2)\eqsp,\\ &\sigma_{k+1}^2=d+a\sigma_k^2+b Y_{k+1}^2 \eqsp,
\end{align*}
where $\min(d,a,b)>0$  can be written as in \eqref{eq:def:XY} by setting $X_k=\sigma_k^2$ and $f_y(x)=d+a x +by^2$.
\end{example}

\begin{example}\label{example:Poisson:threshold:defi}
The Poisson threshold model defined by
\begin{align*}
&Y_ {k+1}|\chunk{X}{0}{k},\chunk{Y}{0}{k}\sim \mcp(X_k)\eqsp,\\
&X_{k+1}=\omega+a X_k+bY_{k+1}+(cX_k+dY_{k+1})\1\{Y_{k+1} \notin (L,U)\}\eqsp,
\end{align*}
where $\mcp(\lambda)$ is the Poisson distribution of parameter $\lambda$ and $0<L<U<\infty$ can be written  as in \eqref{eq:def:XY} by setting $f_y(x)=\omega+ax+b y+(cx+dy)\1\{y \notin (L,U)\}$.

Note that $X_n$ being the parameter of a Poisson distribution, it should be nonnegative. It is therefore  usually assumed that $X_0\geq \omega$ and $\min(\omega,a,b,a+c,b+d)>0$.
\end{example}

\begin{example}\label{example:logPoisson:defi}
The log-linear Poisson autoregression model introduced by \cite{fokianos:tjostheim:2011} and defined by
\begin{align*}
&Y_{k+1}|\chunk{X}{0}{k},\chunk{Y}{0}{k} \sim \mcp(\rme^{X_k})\eqsp,\\
&X_{k+1}=d+aX_k+b \ln(1+Y_{k+1}) \eqsp,
\end{align*}
where $\mcp(\lambda)$ is the Poisson distribution of parameter $\lambda$ can also be written  as in \eqref{eq:def:XY} by setting $f_y(x)=d+ax+b \ln(1+y)$.
\end{example}

A natural question is to find conditions under which there exists a strict-sense stationary and ergodic version of the observation process $\sequence{Y}[k][\nset]$. Note For the GARCH(1,1) model as described in \autoref{example:garch:defi}, this problem can be easily solved by exploiting known results on random coefficient autoregressive processes; see for example \cite{brandt:1986}, \cite{bougerol:picard:1992} and the references therein.

Since $\sequence{Y}[k][\nset]$ is not itself a Markov chain, a classical approach is to prove the existence of a strict-sense stationary ergodic process $\sequence{Y}[k][\nset]$  as a deterministic function of an ergodic Markov chain.  To this aim, it is worthwhile to note that  $\{((X_n,Y_n),\mcf^{X,Y})\, , \, n \in \nset\}$ is a Markov chain on $(\Xset \times \Yset,\Xsigma \otimes \Ysigma)$ with respect to its natural filtration
$$
\mcf^{X,Y}=(\mcf^{X,Y}_k\, , \, k \in \nset)\, , \quad \mbox{where} \quad \mcf^{X,Y}_\ell=\sigma((X_k,Y_k)\, , \, 1 \leq k \leq  \ell , X_0) \eqsp,
$$
and that $\{(X_n,\mcf^{X})\, , \, n \in \nset\}$ is also a Markov chain on $(\Xset,\Xsigma)$ with respect to its natural filtration
$$
\mcf^{X}=(\mcf^{X}_k\, , \, k \in \nset)\, , \quad \mbox{where} \quad \mcf^{X}_k=\sigma(X_\ell\, , \, 0\leq \ell \leq k) \eqsp.
$$
Denote now by $Q$ the Markov kernel associated to $\{X_k\,, k \in \nset\}$ defined implicitly by the recursions \eqref{eq:def:XY}.

In this section, we derive general conditions expressed in terms of $H$ and $f$ under which $\{X_k\,, k \in \nset\}$ and $\{(X_k,Y_k)\,, k \in \nset\}$ admits a unique invariant probability distribution. This is a particularly tricky task when the observation process \sequence{Y}[n][\nset] is integer-valued as in \autoref{example:Poisson:threshold:defi} and \autoref{example:logPoisson:defi}. In such case, the Markov chain \sequence{X}[n][\nset] takes value on
$$
\set{f_{y_k} \circ \dots \circ f_{y_1}(x_0)}{k \in \nset,\, (y_1,\ldots,y_k) \in \zset^k} \eqsp,
$$
which is a countable subset of $\Xset$. When starting from two different points $x_0$ and $x'_0$, the values taken by \sequence{X}[n][\nset] may belong to two disjoint countable  subsets of $\Xset$. In that case, the total variation distance between $Q^n(x_0,\cdot)$ and $Q^n(x'_0,\cdot)$ is always equal to $2$ regardless the values of $n \in \nset$ and thus does not converge to $0$. We therefore stress that the results obtained in the sequel do not assume that the Markov chain is irreducible.

\subsection{Coupling construction and main results}
The proof is based on a coupling construction on Markov chains which is now described.

Introduce a kernel $\bar H$ from $(\Xset^2,\Xsigma^{\otimes 2})$ to $(\Yset^2,\Ysigma^{\otimes 2})$ satisfying the following conditions on the marginals: for all $(x,x') \in \Xset^2$ and $A \in \Ysigma$,
\begin{equation}\label{eq:marginalConditionsH}
\bar H((x,x');A \times \Yset)=H(x,A)\,, \quad \bar H((x,x');\Yset \times A )=H(x',A) \eqsp.
\end{equation}
Let $\ensemble{C} \in \Ysigma^{\otimes 2}$ such that $\bar H((x,x');\ensemble{C}) \neq 0$ and consider the Markov chain $\{Z_k=(X_k,X'_k,U_k)\,, \,  n \in \nset\}$ on the "extended" space $(\Xset^2\times \{0,1\}, \Xsigma^{\otimes 2}\otimes \mcp(\{0,1\}))$ with transition kernel $\bar Q$ implicitly defined as follows. Given $Z_k=(x,x',u) \in \Xset^2\times\{0,1\}$, draw $(Y_{k+1},Y'_{k+1})$ according to $\bar H((x,x');\cdot)$ and set
 \begin{align*}
 & X_{k+1}= f_{Y_{k+1}}(x)\, , \quad  X'_{k+1}= f_{Y'_{k+1}}(x')\, , \quad U_{k+1}=\1_{\ensemble{C}}(Y_{k+1},Y'_{k+1})\eqsp, \\
 &Z_{k+1}=(X_{k+1},X'_{k+1},U_{k+1})\eqsp.
 \end{align*}
 The conditions on the marginals of $\bar H$, given by \eqref{eq:marginalConditionsH} also imply  conditions on the marginals of $\bar Q$: for all $A \in \Xsigma$ and  $z=(x,x',u) \in \Xset^2 \times \{0,1\}$,
\begin{equation}
\label{eq:marginalConditionsQ}
\bar Q(z;A \times \Xset \times \{0,1\})=Q(x;A)\, , \quad \bar Q(z;\Xset \times A  \times \{0,1\})=Q(x';A) \eqsp.
\end{equation}
For $z=(x,x',u) \in  \Xset^2\times \{0,1\}$, write
\begin{equation}\label{eq:expressionAlpha}
\alpha(x,x')=\bar Q(z;\Xset^2 \times \{1\})=\bar H((x,x');\ensemble{C})\neq 0\eqsp.
\end{equation}
The quantity $\alpha(x,x')$ is thus the probability of the event $\{U_{1}=1\}$ conditionally on $Z_0$, taken on $Z_0=z$. Denote by $Q^\sharp$ the kernel on $(\Xset^2,\Xsigma^{\otimes 2})$ defined by: for all $z=(x,x',u) \in \Xset^2\times \{0,1\}$ and $A \in \Xsigma^{\otimes 2}$,
$$
Q^\sharp((x,x'); A)= \frac{\bar Q(z;A \times \{1\})}{\bar Q(z;\Xset^2 \times \{1\})}\eqsp,
 $$
 so that using \eqref{eq:expressionAlpha},
\begin{equation}
\label{eq:barQ:alpha:Qstar}
\bar Q(z;A \times \{1\})=\alpha\left(x,x'\right) Q^\sharp((x,x'); A) \eqsp.
\end{equation}
This shows that $Q^\sharp((x,x');\cdot)$ is the distribution of $(X_1,X'_1)$ conditionally on $(X_0,X'_0,U_1)=(x,x',1)$. Consider the following assumptions:

\begin{hyp}{A}\label{assum:weakFeller}
The Markov kernel $Q$ is weak Feller. Moreover, there exist a compact set $C\in \Xsigma$, $(b,\epsilon) \in \rset^+_* \times \rset^+_*$ and a function $V : \Xset \to \rset^+$ such that
\begin{equation}
\label{eq:driftCond}
QV\leq V -\epsilon+b\1_C \eqsp.
\end{equation}
\end{hyp}

Following \cite[Definition 6.1.2]{meyn:tweedie:1993}, a point $x_0 \in \Xset$ is said to be {\em reachable} for the Markov kernel $Q$ if for all $x \in \Xset$ and all open sets $A$ containing $x_0$, we have $\sum_n Q^n(x,A) > 0$.
\begin{hyp}{A} \label{assum:reachable}
The Markov kernel $Q$ has  a reachable point.
\end{hyp}
In what follows, if $(\Eset,\Esigma)$ a measurable space, $\Xinit$ a probability distribution on  $(\Eset,\Esigma)$ and $R$ a Markov kernel on $(\Eset,\Esigma)$, we denote by $\PP_{\Xinit}^R$ the probability induced on $(\Eset^\nset,\Esigma^{\otimes \nset})$ by a Markov chain with transition kernel $R$ and initial distribution $\Xinit$. We denote by $\PE_{\Xinit}^R$ the associated expectation.

\begin{hyp}{A} \label{assum:asympStrgFeller:general}
There exist a kernel $\bar Q$ on $(\Xset^2\times \{0,1\}, \Xsigma^2\otimes \mcp(\{0,1\}))$, a kernel $Q^\sharp$ on $(\Xset^2, \Xsigma^{\otimes 2})$ and a measurable function $\alpha: \Xset^2 \to \{0,1\}$ satisfying  \eqref{eq:marginalConditionsQ} and \eqref{eq:barQ:alpha:Qstar}, a measurable function $W: \Xset^2 \to [1,\infty)$ and real numbers  $(D,\zeta_1,\zeta_2,\rho) \in (\rset^+)^3 \times (0,1)$   such that for all $(x,x') \in \Xset^2$,
\begin{align}
&1-\alpha(x,x') \leq   d(x,x') W(x,x')\label{eq:hyp:beta:gene}\\
& \PE^{Q^{\sharp}}_{\delta_x \otimes \delta_{x'}}[d(X_n,X'_n)] \leq D \rho^n d(x,x') \label{eq:majo:d}\\
& \PE^{Q^{\sharp}}_{\delta_x \otimes \delta_{x'}}[d(X_n,X'_n)W(X_n,X'_n)] \leq D \rho^n d^{\zeta_1}(x,x')W^{\zeta_2}(x,x') \label{eq:majo:d:w}
\end{align}
Moreover, for all $x \in \Xset$, there exists $\gamma_x>0$ such that
\begin{equation}
\label{eq:definition-gamma-x}
\sup_{x' \in \ball{x}{\gamma_x}} W(x,x')<\infty\eqsp.
\end{equation}
\end{hyp}
\begin{remark}
The assumption \refhyp[A]{assum:weakFeller} implies by \cite[Theorem 2]{tweedie:1988} that the Markov kernel $Q$ admits at least one stationary distribution. Assumptions \refhyps{assum:reachable}{assum:asympStrgFeller:general} are then used to show that this stationary distribution is unique.
\end{remark}
\begin{remark}
These assumptions weaken the Lipshitz conditions obtained by \cite[eq (15)]{henderson:matteson:woodard:2011} by introducing a "drift" function $W$ in \eqref{eq:hyp:beta:gene}. This allows to treat for example the Log-linear Poisson autoregression under minimal assumptions. It thus answers to an open question raised by \cite[p. 816]{henderson:matteson:woodard:2011} on dealing with models which do not satisfy Lipshitz condition as expressed in \cite[eq (15)]{henderson:matteson:woodard:2011}.
\end{remark}
\begin{remark}
Eq \eqref{eq:barQ:alpha:Qstar} shows that we can simulate $(X_1,X'_1,U_1)$ according to $\bar Q((x,x',u);\cdot)$ as follows. Toss a coin with probability of heads $\alpha(x,x')$. If the coin lands head, then set $U_1=1$ and draw $(X_1,X'_1) \sim Q^\sharp((x,x');\cdot)$. Otherwise, set $U_1=0$ and draw $(X_1,X'_1)$ according to
\begin{equation}
\label{eq:residual}
A \mapsto  \frac{\bar Q((x,x',u);A \times \{0\})}{1 -\alpha(x,x')}\eqsp.
\end{equation}
Under \eqref{eq:majo:d} and \eqref{eq:majo:d:w}, the stochastic processes
$$
\{d(X_k,X'_k)\,, \eqsp k \in \nset \}\,, \quad \mbox{and}\quad \{d(X_k,X'_k)W(X_k,X'_k)\,, \eqsp k \in \nset \}\eqsp,
$$
conditionally on the fact that the coin lands heads repeatedly, goes geometrically fast to $0$ in expectation. When the coin lands tail, nothing is assumed about the behavior of these processes but we can bound the probability of this event by \eqref{eq:hyp:beta:gene}.
\end{remark}

\begin{theorem} \label{thm:uniqueInvariantProbaMeasure}
Assume that \refhyps[A]{assum:weakFeller}{assum:asympStrgFeller:general} hold. Then, the Markov kernel $Q$ admits a unique invariant probability measure.
\end{theorem}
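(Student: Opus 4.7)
Existence of an invariant probability measure is immediate from \refhyp{assum:weakFeller} via \cite[Theorem 2]{tweedie:1988}, as already noted in the remark. The plan for uniqueness is to apply the asymptotic strong Feller methodology of \cite{hairer:mattingly:2006}: I will establish that for every bounded Lipschitz function $f:\Xset\to\rset$ and every $x \in \Xset$,
\begin{equation*}
\lim_{\gamma \to 0} \limsup_{n\to\infty} \sup_{x' \in \ball{x}{\gamma}} |Q^n f(x) - Q^n f(x')| = 0 \eqsp.
\end{equation*}
This asymptotic equicontinuity is the form in which I will produce the asymptotic strong Feller property. Once it is proved, uniqueness follows by combining it with the reachable point assumption \refhyp{assum:reachable} through the standard argument of \cite[Cor.~3.17]{hairer:mattingly:2006}: any two distinct invariant measures would have disjoint topological supports, which cannot coexist with the existence of a point reachable from every initial condition (since such a point must lie in the support of every invariant measure).

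\textbf{Coupling bound.} To obtain the above display, my plan is to run the coupling kernel $\bar Q$ starting from $\delta_{(x,x',0)}$ and introduce the event $A_n = \{U_1 = \cdots = U_n = 1\}$. A Markov-property induction based on \eqref{eq:barQ:alpha:Qstar} shows that the law of $(X_1,X'_1,\ldots,X_n,X'_n)$ conditioned on $A_n$ coincides with the law of the $Q^\sharp$-chain started at $(x,x')$. The marginal identity \eqref{eq:marginalConditionsQ} then gives
\begin{equation*}
|Q^n f(x) - Q^n f(x')| \leq 2\|f\|_\infty\, \PP^{\bar Q}_{\delta_{(x,x',0)}}(A_n^c) + \|f\|_{\lipshitz}\, \PE^{\bar Q}_{\delta_{(x,x',0)}}\bigl[\1_{A_n} d(X_n,X'_n)\bigr]\eqsp.
\end{equation*}
The second summand is controlled by \eqref{eq:majo:d}, which directly yields the bound $D\rho^n d(x,x')$. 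For the first summand, I would decompose $A_n^c = \bigcup_{k=1}^{n} \{A_{k-1} \cap \{U_k = 0\}\}$, condition on $\mcf_{k-1}$ to replace $\PP(U_k = 0 \mid \mcf_{k-1})$ by $1 - \alpha(X_{k-1},X'_{k-1})$, apply \eqref{eq:hyp:beta:gene}, and then invoke \eqref{eq:majo:d:w} on the conditional $Q^\sharp$-law to obtain the bound, \emph{uniform in $n$},
\begin{equation*}
\PP^{\bar Q}_{\delta_{(x,x',0)}}(A_n^c) \leq \sum_{k=1}^n D\rho^{k-1} d^{\zeta_1}(x,x') W^{\zeta_2}(x,x') \leq \frac{D}{1-\rho}\, d^{\zeta_1}(x,x') W^{\zeta_2}(x,x')\eqsp.
\end{equation*}

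\textbf{Conclusion and main obstacle.} Collecting the two estimates yields
\begin{equation*}
|Q^n f(x) - Q^n f(x')| \leq \frac{2\|f\|_\infty D}{1-\rho}\, d^{\zeta_1}(x,x') W^{\zeta_2}(x,x') + D\|f\|_{\lipshitz}\,\rho^n d(x,x')\eqsp.
\end{equation*}
Sending $n \to \infty$ kills the second summand, and \eqref{eq:definition-gamma-x} guarantees that $W(x,x')$ stays bounded when $x'$ runs over a sufficiently small ball around $x$; since $\zeta_1 > 0$, the first summand then vanishes as $\gamma \to 0$, delivering the required asymptotic equicontinuity. The main obstacle I anticipate is the rigorous justification of the step ``conditioning on $A_n$ turns the $\bar Q$-dynamics into the $Q^\sharp$-dynamics''---in principle a routine Markov-property computation, but one that must be set up carefully because $Q^\sharp$ is defined only implicitly by \eqref{eq:barQ:alpha:Qstar} and one has to verify that the $U$-coordinates disintegrate out cleanly; everything else reduces to mechanical applications of the coupling hypotheses and the Hairer--Mattingly machinery.
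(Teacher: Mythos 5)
Your proposal is correct and follows essentially the paper's own argument: existence from \cite[Theorem 2]{tweedie:1988} under \refhyp{assum:weakFeller}, uniqueness from the Hairer--Mattingly result (\autoref{thm:hairerMattingly}) applied with the reachable point of \refhyp{assum:reachable}, and the asymptotic strong Feller property obtained from the very same coupling decomposition (the all-heads event bounded through \eqref{eq:majo:d}, the first-tail decomposition through \eqref{eq:hyp:beta:gene} and \eqref{eq:majo:d:w}, local boundedness of $W$ from \eqref{eq:definition-gamma-x}), your bounded-Lipschitz estimate being just the Kantorovich--Rubinstein dual of the paper's bound on $\wass[d_n]{\delta_x Q^n-\delta_{x'}Q^n}$ for the totally separating metrics $d_n=1\wedge nd$. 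The only point to fix is the step you yourself flagged: conditioning on $A_n=\{U_1=\cdots=U_n=1\}$ does \emph{not} give exactly the $Q^\sharp$-chain law but its tilting by $\prod_{i=0}^{n-1}\alpha(X^\sharp_i)$; the correct tool is the unnormalized identity of \autoref{lem:passageEversEetoile}, namely $\Ecan[\delta_x\otimes\delta_{x'}\otimes\mathcal{B}(u)][\bar Q]\left[\varphi(X^\sharp_n)\1\{T>n\}\right]=\Ecan[\delta_x\otimes\delta_{x'}][Q^\sharp]\left[\varphi(X^\sharp_n)\prod_{i=0}^{n-1}\alpha(X^\sharp_i)\right]$, which combined with $\alpha\le 1$ yields all the upper bounds you use, so this is a mild rephrasing rather than a gap.
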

\begin{proof}
The proof is postponed to \autoref{sec:proof}.
\end{proof}
Note that \autoref{thm:uniqueInvariantProbaMeasure} does not provide a rate of convergence to the stationary distribution. Nevertheless, when discussing inference in these models, some moment conditions with respect to the stationary distribution are needed. The following Lemma allows to assess if a function $f$ is integrable with respect to an invariant distribution of the Markov kernel $Q$.
\begin{lemma} \label{lem:piV}
Assume that the Markov kernel $Q$ admits an invariant kernel $\pi$ and that there exist a measurable function $V: \Xset \to \rset^+$ and real numbers $(\lambda,\beta) \in (0,1) \times \rset^+$ such that $QV \leq \lambda V +\beta$.
Then,
$$
\pi V\leq \beta/(1-\lambda) <\infty\eqsp.
$$
\end{lemma}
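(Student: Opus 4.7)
The plan is to exploit the invariance identity $\pi V = \pi Q^n V$ (valid in $[0,\infty]$ by monotone convergence) together with iteration of the drift inequality. Heuristically, applying $\pi$ to $QV\leq \lambda V+\beta$ and using invariance gives $(1-\lambda)\pi V\leq \beta$, which is the claim. The only real issue is that this manipulation is meaningful only once one knows $\pi V<\infty$. The bulk of the proof therefore consists in ruling out $\pi V=+\infty$, which I would do by a truncation argument.

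First I would iterate the drift inequality: a straightforward induction using $Q\mathbf{1}=\mathbf{1}$ gives, pointwise on $\Xset$,
\begin{equation*}
Q^n V \;\leq\; \lambda^n V + \beta\sum_{k=0}^{n-1}\lambda^k \;\leq\; \lambda^n V + \frac{\beta}{1-\lambda}.
\end{equation*}
For an integer $N\geq 1$, set $V_N:=V\wedge N$. Since $V_N\leq V$ and $V_N\leq N$, the operator $Q^n$ being monotone and preserving constants yields
\begin{equation*}
Q^n V_N \;\leq\; \min\!\Bigl(N,\;\lambda^n V + \frac{\beta}{1-\lambda}\Bigr).
\end{equation*}

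Now I would use invariance $\pi V_N=\pi Q^n V_N$ together with dominated convergence. Because $V$ takes values in $\rset^+$, i.e.\ is finite at every point, $\lambda^n V(x)\to 0$ for each $x\in\Xset$, so the right-hand side above converges pointwise to $\min(N,\beta/(1-\lambda))$. The constant $N$ is integrable against the probability measure $\pi$, so dominated convergence yields
\begin{equation*}
\pi V_N \;=\; \lim_{n\to\infty}\pi Q^n V_N \;\leq\; \pi\min\!\Bigl(N,\frac{\beta}{1-\lambda}\Bigr) \;\leq\; \frac{\beta}{1-\lambda}.
\end{equation*}
Since $V_N\uparrow V$ as $N\to\infty$, one final application of monotone convergence gives $\pi V\leq \beta/(1-\lambda)$, which is what we wanted.

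No step poses a genuine difficulty; the only subtle point is the need for the truncation $V_N$. Without it, one cannot apply $\pi$ to the iterated bound and subtract $\lambda^n\pi V$ from both sides, because $\pi V$ could a priori be infinite. The truncation bypasses this circularity by ensuring $\pi V_N\leq N<\infty$ at every intermediate stage, so that the invariance identity becomes genuinely usable.
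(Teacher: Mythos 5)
Your proof is correct and follows essentially the same route as the paper's: truncate $V$ at a level $N$, combine the iterated drift bound $Q^nV\leq \lambda^n V+\beta/(1-\lambda)$ with invariance of $\pi$, pass to the limit in $n$ by dominated convergence, and then let $N\to\infty$ by monotone convergence. The only cosmetic difference is that you justify $Q^n(V\wedge N)\leq(\lambda^nV+\beta/(1-\lambda))\wedge N$ via monotonicity of $Q^n$ and $Q^n\mathbf{1}=\mathbf{1}$, whereas the paper invokes concavity of $x\mapsto x\wedge N$; both give the same inequality.
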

\begin{proof}
The proof is postponed to \autoref{sec:proof}.
\end{proof}

\begin{proposition} \label{prop:ergo-Y:general}
Assume that the Markov kernel $Q$ admits a unique invariant probability measure. Then, there exists a strict-sense stationary ergodic process on $\zset$, \sequence{Y}[n][\zset], solution to the recursion \eqref{eq:def:XY}.
\end{proposition}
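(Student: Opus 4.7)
The plan is to lift the problem from the observation process $\{Y_n\}$ to the bivariate Markov chain $\{(X_n,Y_n)\}$, transfer uniqueness of the invariant distribution from $Q$ to the kernel of this bivariate chain, and then read off the conclusion by projection onto the $\Yset$-coordinate.

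Let $\tilde Q$ denote the Markov kernel on $(\Xset\times\Yset,\Xsigma\otimes\Ysigma)$ that governs $\{(X_n,Y_n)\}$: from $(x,y)$, draw $Y'\sim H(x,\cdot)$ and set $X'=f_{Y'}(x)$; crucially $\tilde Q$ depends only on the first coordinate. Writing $\pi$ for the unique invariant probability of $Q$ supplied by hypothesis, I would define $\tilde\pi$ as the image of $\pi\otimes H$ under the map $(x,y)\mapsto(f_y(x),y)$. The $\Xset$-marginal of $\tilde\pi$ is then $\pi$ by the relation $\pi Q=\pi$, and a short computation reducing $\tilde\pi(\tilde Q g)$ to $\pi(Q\psi)=\pi(\psi)$, with $\psi(x):=\int g(f_{y'}(x),y')\,H(x,dy')$, shows that $\tilde\pi\tilde Q=\tilde\pi$.

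Uniqueness of $\tilde\pi$ comes almost for free: if $\nu$ is any invariant probability for $\tilde Q$, then the identity $\tilde Q((x,y),A\times\Yset)=Q(x,A)$ forces the $\Xset$-marginal $\nu_X$ of $\nu$ to satisfy $\nu_X Q=\nu_X$, hence $\nu_X=\pi$ by hypothesis; plugging this back into $\nu=\nu\tilde Q$ and using the explicit form of $\tilde Q$ yields $\nu=\tilde\pi$. I would then extend the one-sided stationary $\tilde Q$-Markov chain with marginal $\tilde\pi$ to a two-sided stationary process $\{(X_n,Y_n)\}_{n\in\zset}$ by Kolmogorov's extension theorem applied to the projective family of finite-dimensional laws. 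Because the transitions of the bivariate chain embed the recursion $X'=f_{Y'}(x)$, $Y'\sim H(x,\cdot)$ tautologically, this extended process automatically satisfies \eqref{eq:def:XY} for every $k\in\zset$.

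The main obstacle is the ergodicity of the shift on path space. Since the bivariate chain is not assumed irreducible, I cannot appeal to the standard machinery of $\psi$-irreducible recurrence; instead I would use the general principle that the extreme points of the simplex of shift-invariant probabilities on $(\Xset\times\Yset)^{\zset}$ whose finite-dimensional laws are $\tilde Q$-Markov are exactly those whose time-zero marginal is extremal in the convex set of $\tilde Q$-invariant probabilities. As $\tilde\pi$ is the only $\tilde Q$-invariant probability it is trivially extremal, so the shift is ergodic under $\tilde\pi$. The coordinate projection $\{Y_n\}_{n\in\zset}$ is then a measurable factor of a stationary ergodic process and is therefore itself stationary and ergodic, which is the desired conclusion; this abstract ergodicity step is presumably what the Appendix announced in the introduction is designed to make precise.
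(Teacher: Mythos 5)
Your proposal is correct and follows essentially the same route as the paper: lift to the bivariate chain $\{(X_n,Y_n)\}$, identify its unique invariant law as the image of $\pi\otimes H$ under $(x,y)\mapsto(f_y(x),y)$ via the marginal argument, deduce ergodicity of the stationary path measure from uniqueness of the invariant law, extend to a two-sided stationary process, and project onto the $\Yset$-coordinate. The only difference is that you invoke the extreme-point characterization of ergodic stationary Markov measures as a known principle, whereas the paper proves exactly this step self-containedly (Theorems \ref{thm:unicInvMeasImplyErgodic} and \ref{thm:ergod:N:Z} in the Appendix), as you yourself anticipate.
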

\begin{proof}
Denote by $\pi$ the unique invariant distribution of the Markov kernel $Q$.
Now, let $\{(X_n,Y_n)\, , n \in \nset\}$ be the Markov chain satisfying \eqref{eq:def:XY}. If $\bar \pi$ is an invariant distribution for $\{(X_n,Y_n)\, , n \in \nset\}$, then the marginal distribution $A \mapsto \bar \pi(A \times \Yset)$ is a stationary distribution for the Markov kernel $Q$ and since $\pi$ is unique, $\bar \pi(A \times \Yset)=\pi(A)$. If $(X_0,Y_0) \sim \bar \pi$, then by \eqref{eq:def:XY}, $(X_1,Y_1)$ is distributed according to $B \mapsto \iint \pi(\rmd x) H(x;\rmd y_1) \1_B(f_{y_1}(x),y_1)$. Since $\bar \pi$ is an invariant distribution for $\{(X_n,Y_n)\, , n \in \nset\}$, we therefore obtain,
\begin{equation}
\label{eq:definition-bar-pi}
\bar \pi(B)=\iint \pi(\rmd x) H(x;\rmd y_1) \1_B(f_{y_1}(x),y_1)\eqsp, \quad \text{for all $B \in \Xsigma \otimes \Ysigma$.}
\end{equation}
Thus, the  Markov chain$\{(X_n,Y_n)\, , n \in \nset\}$ has a unique invariant distribution given by \eqref{eq:definition-bar-pi}. By applying \autoref{thm:unicInvMeasImplyErgodic} and \autoref{thm:ergod:N:Z}, there exists a strict-sense stationary ergodic process on $\zset$, $\{(X_n,Y_n)\, , n \in \zset\}$, solution to the recursion \eqref{eq:def:XY}. The proof follows.
\end{proof}
We end the section by providing some practical conditions for checking \eqref{eq:majo:d} and \eqref{eq:majo:d:w} in \refhyp[A]{assum:asympStrgFeller:general}.
\begin{lemma} \label{lem:practical:conditions}
Assume that either \eqref{item:assum:rho:beta} or \eqref{item:assum:W:bounded} or \eqref{item:assum:general} (defined below) holds.
\begin{enumerate}[(i)]
\item \label{item:assum:rho:beta} There exists $(\rho, \beta) \in (0,1) \times \rset$ such that for all $(x,x')\in \Xset^2$
\begin{align}
& d(X_1,X'_1) \leq \rho d(x,x')\, , \quad \PP^{Q^\sharp}_{\delta_x \otimes \delta_{x'}}\as \label{eq:contrac}\\
& Q^\sharp W \leq W +\beta \label{eq:driftCond:W}
\end{align}
\item \label{item:assum:W:bounded} $\eqref{eq:majo:d}$ holds and $W$ is bounded.
\item \label{item:assum:general} $\eqref{eq:majo:d}$ holds and there exists $0<\alpha<\alpha'$ and $\beta \in \rset^+$ such that for all $(x,x')\in \Xset^2$
    \begin{align*}
    &d(x,x') \leq W^\alpha(x,x')\\
    &Q^\sharp W^{1+\alpha'} \leq W^{1+\alpha'}+\beta
    \end{align*}
\end{enumerate}
Then, \eqref{eq:majo:d} and \eqref{eq:majo:d:w} hold.
\end{lemma}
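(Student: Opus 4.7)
\emph{Plan.} Conditions (ii) and (iii) both include \eqref{eq:majo:d} as a hypothesis, so in those cases only \eqref{eq:majo:d:w} remains to be proved; in (i) both conclusions must be derived. The unifying idea is to iterate the one-step bound on $d$ together with an iterated drift bound on a suitable power of $W$ under $Q^\sharp$, and in case (iii) to combine these through a Hölder interpolation calibrated so that the required moment of $W$ coincides exactly with the exponent at which a drift is available. The real obstacle lies in the choice of interpolation exponent in case (iii); (i) and (ii) are direct.

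\emph{Cases (i) and (ii).} For (i) the plan is to propagate the almost-sure one-step contraction via the Markov property to obtain $d(X_n,X'_n)\leq\rho^n d(x,x')$ $\PP^{Q^\sharp}_{\delta_x\otimes\delta_{x'}}$-a.s., which yields \eqref{eq:majo:d} with $D=1$. Iterating $Q^\sharp W\leq W+\beta$ gives $\PE^{Q^\sharp}_{\delta_x\otimes\delta_{x'}}[W(X_n,X'_n)]\leq W(x,x')+n\beta$; multiplying the almost-sure bound on $d$ by $W(X_n,X'_n)$, taking expectation and using $W\geq 1$ to absorb the constant should produce
\[
\PE^{Q^\sharp}_{\delta_x\otimes\delta_{x'}}[d(X_n,X'_n)W(X_n,X'_n)]\leq\rho^n(1+n\beta)\,d(x,x')W(x,x'),
\]
after which any $\tilde\rho\in(\rho,1)$ absorbs the polynomial factor and delivers \eqref{eq:majo:d:w} with $\zeta_1=\zeta_2=1$. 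Case (ii) is immediate: boundedness of $W$ gives $\PE^{Q^\sharp}[d\,W]\leq\|W\|_\infty\,\PE^{Q^\sharp}[d]$, and combined with the assumed \eqref{eq:majo:d} and $W(x,x')\geq 1$ this yields \eqref{eq:majo:d:w} again with $\zeta_1=\zeta_2=1$.

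\emph{Case (iii), the main obstacle.} Here the task is to recover geometric decay of $\PE^{Q^\sharp}[d\,W]$ from a first-moment bound on $d$ and a drift on $W^{1+\alpha'}$ only. The plan is to use $d\leq W^\alpha$ to rewrite $dW\leq d^s W^{1+\alpha(1-s)}$ for some $s\in(0,1)$ to be chosen, then apply Hölder with conjugate exponents $1/s$ and $1/(1-s)$ to split the expectation as
\[
\PE^{Q^\sharp}_{\delta_x\otimes\delta_{x'}}[d(X_n,X'_n)W(X_n,X'_n)]\leq\bigl(\PE^{Q^\sharp}_{\delta_x\otimes\delta_{x'}}[d(X_n,X'_n)]\bigr)^{s}\bigl(\PE^{Q^\sharp}_{\delta_x\otimes\delta_{x'}}[W^{(1+\alpha(1-s))/(1-s)}(X_n,X'_n)]\bigr)^{1-s}.
\]
The decisive step is the choice $s=(\alpha'-\alpha)/(1+\alpha'-\alpha)$, which lies in $(0,1)$ exactly because $0<\alpha<\alpha'$ and which makes $(1+\alpha(1-s))/(1-s)=1+\alpha'$, matching the exponent of the available drift. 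Iterating $Q^\sharp W^{1+\alpha'}\leq W^{1+\alpha'}+\beta$ then yields $\PE^{Q^\sharp}[W^{1+\alpha'}(X_n,X'_n)]\leq(1+n\beta)W^{1+\alpha'}(x,x')$ (using $W\geq 1$), and plugging this together with the assumed \eqref{eq:majo:d} into the Hölder bound and absorbing $\rho^{ns}(1+n\beta)^{1-s}$ into a rate $\tilde\rho\in(\rho^s,1)$ should deliver \eqref{eq:majo:d:w} with $\zeta_1=s$ and $\zeta_2=(1+\alpha')(1-s)$. The substantive content of the argument is precisely this calibration of $s$; everything else is iteration of Markov and drift arguments.
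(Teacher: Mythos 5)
Your proof is correct and follows essentially the same route as the paper: iterate the almost-sure contraction together with the drift inequality in case (i), use boundedness of $W$ in case (ii), and in case (iii) interpolate via H\"older with $s=(\alpha'-\alpha)/(1+\alpha'-\alpha)$ so that the required $W$-moment matches the exponent $1+\alpha'$ of the available drift, finally absorbing the polynomial factor $(1+n\beta)$ into a slightly larger geometric rate $\tilde\rho$. (Only a trivial touch-up is needed in case (i), where $\beta\in\rset$ may be negative: replace $\beta$ by $\beta\vee 0$ before writing $W(x,x')+n\beta\leq(1+n\beta)W(x,x')$, since otherwise that last inequality can fail, although the conclusion is then even easier.)
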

\begin{remark}
Ergodicity under Lipshitz conditions have been studied in a wide literature including \cite{sunyach:1975} or \cite{diaconis:freedman:1999}, but the fact that the various contraction conditions in \autoref{lem:practical:conditions} are related to the kernel $Q^\sharp$ and not to the kernels $Q$ or $\bar Q$ make it possible to check the assumptions \eqref{eq:majo:d} and \eqref{eq:majo:d:w} quite directly.
\end{remark}
\begin{proof}
See \autoref{sec:proof}.
\end{proof}

\subsection{Examples}
\subsubsection{A Poisson threshold model}
Existence and uniqueness of the stationary distribution for the Poisson threshold model
have been already discussed in \cite{henderson:matteson:woodard:2011}. We can obtain the same results by applying \autoref{thm:uniqueInvariantProbaMeasure} provided that assumptions \refhyps[A]{assum:weakFeller}{assum:asympStrgFeller:general} hold.
Consider a Markov chain $\{X_n\, , \ n \in \nset\}$ with a transition kernel $Q$ given implicitly by the following recursive equations:
\begin{align*}
&Y_ {n+1}|\chunk{X}{0}{n},\chunk{Y}{0}{n}\sim \mcp(X_n)\eqsp,\\
&X_{n+1}=\omega+a X_n+bY_{n+1}+(cX_n+dY_{n+1})\1\{Y_{n+1} \notin (L,U)\}\eqsp,
\end{align*}
where $0<L<U<\infty$. Moreover, to keep the parameter of the Poisson distribution positive, it is assumed that $X_0\geq \omega$ and $\min(\omega,a,b,a+c,b+d)>0$. Here, we set $\Xset=\rset^+$, $d(x,x')=|x-x'|$ and
$$
f_y(x)=\omega+ax+b y+(cx+dy)\1\{y \notin (L,U)\}\eqsp.
$$
\begin{lemma}\label{lem:threshold:A3}
Assume that $a\vee (a+c)<1$, then \refhyp{assum:asympStrgFeller:general} holds.
\end{lemma}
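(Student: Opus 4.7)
The plan is to verify \refhyp[A]{assum:asympStrgFeller:general} directly by an explicit coupling, and then to invoke Lemma~\ref{lem:practical:conditions}\eqref{item:assum:rho:beta} with the trivial choice $W\equiv 1$. This obviates any drift construction and exploits the fact that the contraction in the Poisson threshold model is almost sure on the synchronization event $\{Y_{k+1}=Y'_{k+1}\}$.

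First I construct the coupling kernel $\bar H$ by Poisson thinning: for $x\le x'$, draw $Y\sim\mcp(x)$ and $Z\sim\mcp(x'-x)$ independently, set $Y':=Y+Z$; apply the symmetric construction when $x'\le x$. Then $Y\sim\mcp(x)$ and $Y'\sim\mcp(x')$, so \eqref{eq:marginalConditionsH} holds. Take $\ensemble{C}:=\{(y,y')\in\nset^2:\,y=y'\}$, so that
\[
\alpha(x,x')=\bar H((x,x');\ensemble{C})=\PP(Z=0)=\rme^{-|x-x'|}>0.
\]
With $W\equiv 1$, the elementary inequality $1-\rme^{-t}\le t$ for $t\ge 0$ yields \eqref{eq:hyp:beta:gene}, and \eqref{eq:definition-gamma-x} is trivial.

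Next I identify $Q^\sharp$ and establish the contraction. Conditioning on $U_1=1$ (i.e.\ $Z=0$) leaves $Y$ with its unchanged law $\mcp(x)$ by independence of $Y$ and $Z$, and on this event $X_1=f_Y(x)$, $X'_1=f_Y(x')$. A direct computation gives
\begin{equation*}
f_y(x)-f_y(x')=(x-x')\bigl(a+c\,\1\{y\notin(L,U)\}\bigr),
\end{equation*}
so that $|X_1-X'_1|\le\rho\,|x-x'|$ almost surely under $Q^\sharp_{\delta_x\otimes\delta_{x'}}$, with $\rho:=a\vee(a+c)<1$. This is the almost-sure contraction \eqref{eq:contrac}, and \eqref{eq:driftCond:W} holds trivially with $\beta=0$ since $W\equiv 1$. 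Lemma~\ref{lem:practical:conditions}\eqref{item:assum:rho:beta} then delivers \eqref{eq:majo:d} and \eqref{eq:majo:d:w}, completing the verification.

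There is no serious obstacle: the only non-routine observation is that the thinning coupling of two Poisson laws yields the linear bound $1-\alpha(x,x')\le|x-x'|$, which is precisely what allows us to dispense with a non-trivial drift $W$. The threshold indicator in $f_y$ does not spoil the contraction, because conditional on $Y=Y'$ the difference $X_1-X'_1$ is deterministically either $a(x-x')$ or $(a+c)(x-x')$, both of whose moduli are strictly smaller than $|x-x'|$ by the hypothesis $a\vee(a+c)<1$.
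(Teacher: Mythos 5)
Your proof is correct and follows essentially the same route as the paper: the same Poisson-thinning coupling with synchronization event $\{Y=Y'\}$, the same $\alpha(x,x')=\rme^{-|x-x'|}$ bounded via $1-\rme^{-t}\le t$ with the trivial drift $W\equiv 1$, and the same almost-sure contraction with $\rho=a\vee(a+c)$ fed into Lemma~\ref{lem:practical:conditions}\eqref{item:assum:rho:beta}. Your explicit justification that conditioning on $\{Z=0\}$ leaves the law of $Y$ unchanged is a small clarification of a step the paper treats as obvious, but the argument is the same.
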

\begin{proof}
Define implicitly $\bar Q$ as the transition kernel Markov chain $\{Z_n\, , \ n \in \nset\}$ with $Z_n=(X_n,X'_n,U_n)$ in the following way. Given $Z_n=(x,x',u)$, if $x \leq x'$, draw independently $Y_{n+1} \sim \mcp(x)$, $V_{n+1}\sim \mcp(x'-x)$ and set $Y'_{n+1}=Y_{n+1}+V_{n+1}$. Otherwise, draw independently $Y_{n+1}' \sim \mcp(x')$ and $V_{n+1}\sim \mcp(x-x')$ and set $Y_{n+1}=Y'_{n+1}+V_{n+1}$. In all cases, set
\begin{align*}
&X_{n+1}=f^\theta_{Y_{n+1}}(x)\eqsp,\\
 &X'_{n+1}=f^\theta_{Y'_{n+1}}(x')\eqsp, \\
 &U_{n+1}=\1\{Y_{n+1}=Y'_{n+1}\}=\1\{V_{n+1}=0\}\eqsp,\\
 & Z_{n+1}=(X_{n+1},X'_{n+1},U_{n+1})\eqsp.
\end{align*}
Note again that if $Y\sim \mcp(\lambda)$, $V\sim \mcp(\lambda')$ and $(Y,V)$ are independent, then $Y+V\sim \mcp(\lambda+\lambda')$. This implies that $\bar Q$ satisfies the marginal conditions \eqref{eq:marginalConditionsQ}.
Define for all $x^\sharp=(x,x') \in \rset^2$, $Q^\sharp(x^\sharp,\cdot)$ as the law of $(X_1,X'_1)$ where
\begin{equation*}
X_{1}=f^\theta_Y(x)\, , \quad X'_{1}=f^\theta_Y(x')\eqsp,
\end{equation*}
and $Y \sim \mcp(x \wedge x')$, and set, for all $x^\sharp=(x, x') \in \rset^2$,
$$
\alpha(x^\sharp)=\exp\left\{-|x-x'|\right\}\eqsp.
$$
With these definitions, obviously, $\bar Q$, $\alpha$ and $Q^\sharp$ satisfy \eqref{eq:barQ:alpha:Qstar}.
Moreover, using $1-\rme^{-u}\leq u$, we obtain
\begin{equation*}
1-\alpha(x^\sharp)=1-\exp\left\{-|x-x'|\right\} \leq |x-x'|\eqsp.
\end{equation*}
so that \eqref{eq:hyp:beta:gene} holds with $W=\1_{\rset^2}$. To obtain \eqref{eq:majo:d} and \eqref{eq:majo:d:w}, we apply \autoref{lem:practical:conditions} by checking \eqref{item:assum:rho:beta} in \autoref{lem:practical:conditions}.
\begin{equation}
\label{eq:threshold:brutos}
\Pcan[\delta_x\otimes\delta_{x'}][Q^\sharp]\{|X_1-X'_1|= |a+c\1\{Y_1 \notin (L,U)\}| |x-x'|\leq \rho |x-x'|\}=1\eqsp,
\end{equation}
where $\rho=a\vee(a+c)<1$. The function $W$ being constant, \eqref{item:assum:rho:beta} holds and the proof is completed.
\end{proof}

\begin{proposition}\label{prop:threshold:unique}
Assume that $(a+b+c+d)\vee a<1$, then the Markov kernel $Q$ admits a unique stationary distribution $\pi$. Moreover, $\pi V<\infty$ where $V$ is the function $V:\, \rset^+\to \rset^+$ defined by $V(x)=x$.
\end{proposition}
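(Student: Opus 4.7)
The plan is to invoke \autoref{thm:uniqueInvariantProbaMeasure} for uniqueness of the invariant distribution and then \autoref{lem:piV} for the moment bound. The hypothesis $(a+b+c+d)\vee a<1$ is strictly stronger than the $a\vee(a+c)<1$ required in \autoref{lem:threshold:A3} (since $b,d>0$), so \refhyp[A]{assum:asympStrgFeller:general} comes for free. What remains is to verify \refhyp[A]{assum:weakFeller} and \refhyp[A]{assum:reachable}.

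For \refhyp[A]{assum:weakFeller}, the weak Feller property can be obtained by dominated convergence on the Poisson series $Qg(x)=\sum_{y\ge 0} g(f_y(x))\rme^{-x}x^y/y!$: each summand is continuous in $x$ since $f_y$ is, and on any bounded neighbourhood $[a,b]$ of a point one has $|g(f_y(x))\rme^{-x}x^y/y!|\le \|g\|_\infty \rme^{-a}b^y/y!$, which is summable and licenses passage to the limit. For the drift I take $V(x)=x$. Using $\PE_{\mcp(x)}[Y]=x$ together with $\1\{Y\notin(L,U)\}\le 1$ and $Y\1\{Y\notin(L,U)\}\le Y$ gives the affine bound
\[
QV(x)\le \omega+(a+b+c+d)x=\rho V(x)+\omega, \qquad \rho=a+b+c+d<1,
\]
and the Foster--Lyapunov form \eqref{eq:driftCond} follows by taking a compact $C=[\omega,M]$ with $M$ large enough: outside $C$ the increment $QV(x)-V(x)\le -(1-\rho)V(x)+\omega$ is at most $-\epsilon$, while $QV$ is bounded on $C$.

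The main step requiring care is \refhyp[A]{assum:reachable}, because the support of $Q^n(x,\cdot)$ is countable and $Q$ is therefore not irreducible, so a reachable point must be exhibited by hand. The natural candidate is $x_\star=\omega/(1-a-c)$, the unique fixed point of the contraction $\phi(x)=\omega+(a+c)x$; crucially $\phi=f_0$, since $L>0$ forces $0\notin(L,U)$. The event $\{Y_1=\cdots=Y_n=0\}$ has $\PP_x^Q$-probability $\exp\bigl(-\sum_{k=0}^{n-1}\phi^k(x)\bigr)>0$ and drives the chain along the deterministic orbit $\phi^k(x)\to x_\star$, so for any open neighbourhood $A$ of $x_\star$ one gets $Q^n(x,A)>0$ as soon as $\phi^n(x)\in A$, proving reachability.

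With the three assumptions in place, \autoref{thm:uniqueInvariantProbaMeasure} yields a unique invariant measure $\pi$, and feeding the geometric drift $QV\le\rho V+\omega$ into \autoref{lem:piV} gives $\pi V\le \omega/(1-\rho)<\infty$. The only genuinely delicate point is the identification of the reachable point: the contractive deterministic branch triggered by $Y=0$ pins down a common accumulation point for all sample paths despite the failure of irreducibility, and everything else reduces to the coefficient inequality $a+b+c+d<1$ killing the linear growth in the drift.
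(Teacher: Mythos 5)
Your overall route is the paper's own: verify \refhyp[A]{assum:weakFeller} and \refhyp[A]{assum:reachable}, combine them with \autoref{lem:threshold:A3} and \autoref{thm:uniqueInvariantProbaMeasure} to get existence and uniqueness, and then feed the geometric drift into \autoref{lem:piV}. Your dominated-convergence proof of the weak Feller property (the paper uses a Slutsky-type argument instead, an immaterial difference) and your reachability argument via the zero-observation branch $f_0(x)=\omega+(a+c)x$ with fixed point $\omega/(1-a-c)$ are both correct and coincide with the paper's.

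The gap is in the drift bound. The model only assumes $\min(\omega,a,b,a+c,b+d)>0$, so $c$ or $d$ may be negative (only the sums $a+c$ and $b+d$ are required to be positive). Your inequalities $\1\{Y\notin(L,U)\}\le 1$ and $Y\1\{Y\notin(L,U)\}\le Y$ bound $c\,x\,\PP[Y\notin(L,U)]$ by $cx$ and $d\,\PE[Y\1\{Y\notin(L,U)\}]$ by $dx$ only when $c\ge 0$ and $d\ge 0$; if, say, $c<0$ the inequality runs the wrong way and the best crude bound degrades to $QV(x)\le\omega+(a+b+d)x$, whose coefficient need not be below $1$ under the stated hypothesis $(a+b+c+d)\vee a<1$ (take for instance $a=0.2$, $b=0.8$, $c=-0.15$, $d=0.1$). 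The same sign issue touches your claim ``since $b,d>0$'': the correct justification that $a\vee(a+c)<1$ is $a+c=(a+b+c+d)-(b+d)<1$ because $b+d>0$. The paper avoids the problem by writing $QV(x)=\bigl(a+b+c\,\PP[N(x)\notin(L,U)]+d\,\PE[N(x)\1_{N(x)\notin(L,U)}]/x\bigr)V(x)+\omega$ exactly and using $\lim_{x\to\infty}\PP[N(x)\notin(L,U)]=1$ and $\lim_{x\to\infty}\PE[N(x)\1_{N(x)\notin(L,U)}]/x=1$, together with boundedness of $QV$ on compact sets, to deduce $QV\le\lambda V+\beta$ for some $\lambda\in(0,1)$ regardless of the signs of $c$ and $d$. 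Replace your term-by-term bound by this asymptotic argument (or explicitly add the hypothesis $c,d\ge 0$), and the rest of your proof goes through unchanged.
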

\begin{proof}
According to \autoref{thm:uniqueInvariantProbaMeasure} and \autoref{lem:threshold:A3}, it is enough to show \refhyps{assum:weakFeller}{assum:reachable} to obtain the existence and unicity of an invariant probability measure $\pi$. We start with \refhyp{assum:weakFeller}. A random variable of distribution $\mcp(\lambda)$ converges weakly to a random variable of distribution $\mcp(\lambda')$ as $\lambda \to \lambda'$. This implies by Slutsky's Lemma that if $N: \rset_+ \to \nset$, $x \mapsto N(x)$ is a Poisson process of unit intensity, then
$$
X_1(x) = \omega+a x+bN(x)+(cx+dN(x))\1\{N(x) \notin (L,U)\}
$$
converges weakly to $X_1(x')$ as $x \to x'$. Therefore, $Q$ is weakly Feller. Moreover, it can be readily checked that the nonnegative function $V(x)=x$ ($V$ is indeed nonnegative as a function defined on $\Xset=\rset^+$) satisfies:
$$
Q V(x) = (a+b+c\PP[N(x)\notin (L,U)]+d\PE[N(x)\1_{N(x)\notin (L,U)}]/x)V(x)+\omega\eqsp.
$$
It can be easily checked that
$$
\lim_{x \to \infty} \PP[N(x)\notin (L,U)] = 1 \, , \quad \mbox{and}\quad \lim_{x \to \infty} \PE[N(x)\1_{N(x)\notin (L,U)}]/x = 1\eqsp,
 $$
so that
$$
\lim_{x \to \infty} \frac{Q V(x)}{V(x)} = a+b+c+d<1 \, , \quad \mbox{and}\quad \sup_{0\leq x\leq M} Q V(x)<\infty\, , \, \forall M \in \rset^+\eqsp.
 $$
 These two properties imply that there exist $(\lambda, \beta) \in (0,1) \times \rset^+$ such that
  \begin{equation}
  \label{eq:threshold:driftGeom}
  QV \leq \lambda V+\beta\eqsp.
  \end{equation}
  Thus, the drift condition \eqref{eq:driftCond} holds. Thus, \refhyp{assum:weakFeller} is satisfied. Set $x_\infty=\omega/(1-a-c)$ and let $C$ be an open set containing $x_\infty$.
  Let $x \in \rset$ and define recursively the sequence $x_0=x$ and for all $k \geq 1$, $x_k=\omega+(a+c) x_{k-1}$. Since $(a+c) < 1$, this sequence has a unique limiting point, $\lim_{n \to \infty} x_n=x_\infty$. Therefore, there exists some $n$ such that for all $k\geq n$, $x_k \in C$. For such $n$, we have
\begin{multline*}
Q^n(x,C)=\Pcan[\delta_x][Q](X_n \in C)\geq \Pcan[\delta_x][Q](X_n \in C,\ Y_1=\ldots=Y_{n}=0) \\
=\Pcan[\delta_x][Q](Y_1=\ldots=Y_{n}=0)>0\eqsp.
\end{multline*}
so that \refhyp{assum:reachable} holds. Moreover, since the function $V(x)=x$ satisfies \eqref{eq:threshold:driftGeom}, \autoref{lem:piV} show that $\pi V<\infty$.
\end{proof}
\begin{remark}
In the proof of \autoref{lem:threshold:A3}, we check \refhyp{assum:asympStrgFeller:general} by verifying \autoref{lem:practical:conditions}-\eqref{item:assum:rho:beta}. In some models, applying \autoref{lem:practical:conditions}-\eqref{item:assum:W:bounded} may provide more flexibility as can be seen in the following example:
\begin{align*}
&Y_ {n+1}|\chunk{X}{0}{n},\chunk{Y}{0}{n}\sim \mcp(X_n)\eqsp,\\
&X_{n+1}=\omega+(a +c\1_{Y_{n+1}=0}) X_n+bY_{n+1}+dY_{n+1}\1_{Y_{n+1}=0}\eqsp,
\end{align*}
This is a particular Poisson threshold model as defined in \autoref{example:Poisson:threshold:defi} with $(L,U)=(1/2,\infty)$. It is assumed that $X_0\geq \omega$ and $\min(\omega,a,b,a+c,b+d)>0$. Now, according to \autoref{lem:threshold:A3}, \refhyp{assum:asympStrgFeller:general} holds if $a \vee (a+c)<1$. We can now prove that \refhyp{assum:asympStrgFeller:general} holds even if $a+c>1$ provided that $a \vee (a+c\rme^{-\omega})<1$. To see this, we just adapt the proof of \autoref{lem:threshold:A3} by replacing \eqref{eq:threshold:brutos} by
$$
\PE^{Q^\sharp}_{\delta_x \otimes \delta_{x'}}(|X_1-X'_1|) =\PE^{Q^\sharp}_{\delta_x \otimes \delta_{x'}}(a+c\1\{Y_1 =0\}) |x-x'|\leq \rho|x-x'|
$$
where $\rho\eqdef a+c\rme^{-\omega}<1$. This implies that \eqref{eq:majo:d} holds so that condition \autoref{lem:practical:conditions}-\eqref{item:assum:W:bounded} holds and thus \autoref{lem:practical:conditions} concludes the proof.
\end{remark}

\subsubsection{Log-linear Poisson autoregression}
Consider a Markov chain $\{X_n\, , \ n \in \nset\}$ with a transition kernel $Q$ given implicitly by the following recursive equations:
\begin{align}
&Y_{n+1}|\chunk{X}{0}{n},\chunk{Y}{0}{n}\sim \mcp(\rme^{X_{n}}) \eqsp, \nonumber \\
&X_{n+1}=d+aX_{n}+b\ln\left(Y_{n+1} +1\right) \eqsp, \label{defi:logPoisson}
\end{align}
where $\mcp(\lambda)$ is a Poisson distribution with parameter $\lambda$. In this case, the state space is $\Xset=\rset$ which is equipped with the euclidean distance $d(x,x')=|x-x'|$ and the function $f_y$ is defined by: $f_y(x)=d+ax+b \ln(1+y)$.

\begin{lemma} \label{lem:logPoiss:asf}
If $|a+b|\vee |a| \vee |b|<1$, then \refhyp{assum:asympStrgFeller:general} holds.
\end{lemma}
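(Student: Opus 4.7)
My plan is to mirror the construction of \autoref{lem:threshold:A3}, replacing the Poisson intensities $x,x'$ by $\rme^x,\rme^{x'}$ and exploiting the affine form of $f_y$ in the variable $x$. Given $(x,x')\in\rset^2$ with $x\le x'$ (the other case being symmetric), I build $\bar Q$ by Poisson superposition: draw independently $Y_{n+1}\sim\mcp(\rme^x)$ and $V_{n+1}\sim\mcp(\rme^{x'}-\rme^x)$, set $Y'_{n+1}=Y_{n+1}+V_{n+1}$, and define $X_{n+1}=f_{Y_{n+1}}(x)$, $X'_{n+1}=f_{Y'_{n+1}}(x')$, $U_{n+1}=\1\{V_{n+1}=0\}$. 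Then $Y_{n+1}\sim\mcp(\rme^x)$ and $Y'_{n+1}\sim\mcp(\rme^{x'})$, so \eqref{eq:marginalConditionsQ} holds. I take
$$
\alpha(x,x')=\exp(-|\rme^x-\rme^{x'}|)\,,\qquad Q^\sharp((x,x');\cdot)=\text{law of }(f_Y(x),f_Y(x')) \text{ with } Y\sim\mcp(\rme^{x\wedge x'})\,,
$$
so that \eqref{eq:barQ:alpha:Qstar} is automatic: conditioning on $\{V_{n+1}=0\}$ forces $Y=Y'\sim\mcp(\rme^{x\wedge x'})$.

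To verify \eqref{eq:hyp:beta:gene}, I combine $1-\rme^{-u}\le u$ with the mean-value bound $|\rme^x-\rme^{x'}|\le\rme^{x\vee x'}|x-x'|$ to obtain $1-\alpha(x,x')\le\rme^{x\vee x'}\, d(x,x')$. This dictates choosing a drift function $W\colon\rset^2\to[1,\infty)$ that majorises $\rme^{x\vee x'}$ while remaining locally bounded, so that \eqref{eq:definition-gamma-x} is trivial. A natural candidate is
$$
W(x,x')=1+\rme^x+\rme^{-x}+\rme^{x'}+\rme^{-x'}\,,
$$
which dominates $\rme^{x\vee x'}$ and is smooth in $(x,x')$.

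I propose to close via \autoref{lem:practical:conditions}-\eqref{item:assum:rho:beta}. The contraction \eqref{eq:contrac} is essentially free: under $Q^\sharp$ both coordinates share the same draw $Y$, so $X_1-X'_1=a(x-x')$ and thus $|X_1-X'_1|=|a|\,|x-x'|$ almost surely, with $\rho=|a|<1$ by assumption. The real work is the additive drift $Q^\sharp W\le W+\beta$. Each term of $W$ produces an expression of the form $\rme^{\pm(d+ax)}\PE[(1+Y)^{\pm b}]$ (and its $x'$ analogue) with $Y\sim\mcp(\rme^{x\wedge x'})$, which I plan to bound by $\PE[(1+Y)^s]\le 1+\rme^{|s|(x\wedge x')}$ valid for $|s|\le 1$ (Jensen applied to the concave map $u\mapsto u^{|s|}$ for $s\in[0,1]$, together with the trivial estimate $(1+Y)^s\le 1$ when $s\in[-1,0]$).

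The main obstacle is then verifying $Q^\sharp W\le W+\beta$ uniformly in the four regimes $x,x'\to\pm\infty$, and this is precisely where all three hypotheses are consumed. The condition $|a+b|<1$ tames the $+\infty$ regime, where $Y$ concentrates near $\rme^x$ so that $\ln(1+Y)\approx x$ and $X_1\approx d+(a+b)x$, and $\rme^{X_1}$ is dominated by $\rme^x$; the condition $|a|<1$ tames the $-\infty$ regime, where $Y=0$ with overwhelming probability, $X_1\approx d+ax$, and $\rme^{-X_1}$ is dominated by $\rme^{-x}$; and the condition $|b|<1$ is exactly what is needed both to invoke Jensen on $\PE[(1+Y)^{\pm b}]$ and to ensure the resulting $\rme^{|b|(x\wedge x')}$ factor is controlled by the exponentials appearing in $W$. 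Assembling these four regime bounds should produce a finite constant $\beta$ with $Q^\sharp W\le W+\beta$, and \autoref{lem:practical:conditions}-\eqref{item:assum:rho:beta} then delivers \eqref{eq:majo:d} and \eqref{eq:majo:d:w}, completing the verification of \refhyp{assum:asympStrgFeller:general}.
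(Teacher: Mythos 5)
Your construction (the Poisson superposition coupling, the same $\alpha(x,x')=\exp(-|\rme^x-\rme^{x'}|)$ and $Q^\sharp$ driven by a common $Y\sim\mcp(\rme^{x\wedge x'})$, the a.s.\ contraction $|X_1-X'_1|=|a||x-x'|$, and the Jensen bound $\PE[(1+Y)^s]\le 1+\rme^{|s|(x\wedge x')}$ for $|s|\le 1$) is essentially identical to the paper's proof of \autoref{lem:logPoiss:asf}, the only cosmetic difference being your drift function $W(x,x')=1+\rme^x+\rme^{-x}+\rme^{x'}+\rme^{-x'}$ in place of the paper's $\rme^{|x|\vee|x'|}$, which are comparable up to constants. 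Your verification of $Q^\sharp W\le W+\beta$ is left as a sketch, but carrying it out term by term with the estimates you list yields $Q^\sharp W(x,x')\le C\rme^{|d|}\bigl(1+\rme^{\gamma(|x|\vee|x'|)}\bigr)$ with $\gamma=|a|\vee|b|\vee|a+b|<1$, which is exactly the paper's bound, so \autoref{lem:practical:conditions}-\eqref{item:assum:rho:beta} applies as you intend.
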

\begin{proof}
Define implicitly $\bar Q$ as the transition kernel Markov chain $\{Z_n\, , \ n \in \nset\}$ with $Z_n=(X_n,X'_n,U_n)$ in the following way. Given $Z_n=(x,x',u)$, if $x \leq x'$, draw independently $Y_{n+1} \sim \mcp(\rme^{x})$ and $V_{n+1}\sim \mcp(\rme^{x'}-\rme^{x})$ and set $Y'_{n+1}=Y_{n+1}+V_{n+1}$. Otherwise, draw independently $Y'_{n+1} \sim \mcp(\rme^{x'})$ and $V_{n+1}\sim \mcp(\rme^{x}-\rme^{x'})$ and set $Y_{n+1}=Y'_{n+1}+V_{n+1}$. In all cases, set
\begin{align*}
&X_{n+1}=d+ax+b\ln\left(Y_{n+1} +1\right)\eqsp,\quad \\ &X'_{n+1}=d+ax'+b\ln\left(Y'_{n+1}+1\right)\eqsp,\quad \\
&U_{n+1}=\1\{Y_{n+1}=Y'_{n+1}\}=\1\{V_{n+1}=0\}\eqsp, \\
& Z_{n+1}=(X_{n+1},X'_{n+1},U_{n+1})\eqsp.
\end{align*}
Along the same lines as above, $\bar Q$ satisfies the marginal conditions \eqref{eq:marginalConditionsQ}.
Moreover, define for all $x^\sharp=(x,x') \in \Xset^2$, $Q^\sharp(x^\sharp,\cdot)$ as the law of $(X_1,X'_1)$ where
\begin{align}
\label{eq:definition-X-sharp-log-poisson}
&X_{1}=d+ax+b\ln\left(Y+1\right)\eqsp,  \quad Y \sim \mcp(\rme^{x \wedge x'}) \eqsp, \\
\nonumber
&X'_{1}=d+ax'+b\ln\left(Y +1\right)\eqsp,
\end{align}
 and set for all $x^\sharp=(x, x') \in \rset^2$,
$$
\alpha(x^\sharp)=\exp\left\{-\rme^{x \vee x'}+\rme^{x \wedge x'} \right\}\eqsp.
$$
With these definitions, obviously, $\bar Q$, $\alpha$ and $Q^\sharp$ satisfy \eqref{eq:barQ:alpha:Qstar}.
Using twice $1-\rme^{-u}\leq u$, we obtain
\begin{align*}
1-\alpha(x^\sharp)&=1-\exp\left\{-\rme^{x \vee x'}+\rme^{x \wedge x'} \right\}
\leq \rme^{x \vee x'}-\rme^{x \wedge x'}\\
&=\rme^{x \vee x'}(1-\rme^{-|x-x'|}) \leq  W(x,x')|x-x'|\eqsp.
\end{align*}
with $W(x^\sharp)=\rme^{|x| \vee |x'|}$ so that \eqref{eq:hyp:beta:gene} holds. To check \eqref{eq:majo:d} and \eqref{eq:majo:d:w}, we apply \autoref{lem:practical:conditions} by checking \eqref{item:assum:rho:beta} in \autoref{lem:practical:conditions}.
Note first that
$$
\Pcan[\delta_x\otimes\delta_{x'}][Q^\sharp]\{|X_1-X'_1|= |a| |x-x'|\}=1\eqsp,
$$
so that \eqref{eq:contrac} is satisfied. To check \eqref{eq:driftCond:W}, we will show that
\begin{equation}\label{eq:logPoiss:limQW/W}
\lim_{|x| \vee |x'| \to \infty} \frac{Q^\sharp W(x,x')}{W(x,x')}=0\eqsp.
\end{equation}
and for all $M>0$,
\begin{equation} \label{eq:logPoiss:bound}
\sup_{|x| \vee |x'| \leq M} Q^\sharp W(x,x')<\infty\eqsp.
\end{equation}
Without loss of generality, we assume that $x \leq x'$.
Using \eqref{eq:definition-X-sharp-log-poisson}, we get
\begin{equation}\label{eq:logPoiss:technicos:three}
Q^\sharp W(x,x')=\Ecan\left(\rme^{|X_1| \vee |X'_1|}\right) \leq \Ecan(\rme^{|X_1|})+\Ecan(\rme^{|X'_1|})\eqsp.
\end{equation}
First consider the second term of the \rhs\ of \eqref{eq:logPoiss:technicos:three},
\begin{equation}\label{eq:logPoiss:technicos:one}
\Ecan(\rme^{|X'_1|})\leq \rme^{|d|} \Ecan(\rme^{|a x'+b \ln(1+Y)|})\eqsp.
\end{equation}
Now, note that if $u$ and $v$ have different signs or if $v=0$, then $|u+v| \leq |u| \vee |v|$. Otherwise, $|u+v|= (u+v)\1\{v>0\}\vee (-u-v)\1\{v< 0\}$. This implies that
$$
\rme^{|u+v|} \leq \rme^{|u|}+\rme^{|v|}+ \rme^{u+v}\1\{v>0\} + \rme^{-u-v}\1\{v<0\}\eqsp.
$$
Plugging this into \eqref{eq:logPoiss:technicos:one},
\begin{multline*}
\Ecan(\rme^{|X'_1|}) \leq \rme^{|d|} \left( \rme^{|a||x'|}+ \PE[(1+Y)^{|b|}]+\rme^{ax'}\PE[(1+Y)^{b}] \1\{b >0\} \right. \\
\left. +\rme^{-ax'}\PE[(1+Y)^{-b}] \1\{b <0\}\right)\eqsp.
\end{multline*}
Note that for all $\gamma \in [0,1]$,
$$
\PE[(1+Y)^\gamma]\leq [\PE(1+Y)]^\gamma=(1+\rme^{x})^\gamma\leq 1+\rme^{\gamma x} \leq 1+\rme^{\gamma x'} \eqsp.
$$
Moreover, since $|b|\in [0,1]$, we have $b \1\{b >0\}  \in [0,1]$ and $-b  \1\{b <0\} \in [0,1]$. Therefore,
\begin{align*}
\Ecan(\rme^{|X'_1|}) &\leq \rme^{|d|} \left( \rme^{|a||x'|}+ 1+\rme^{|b| |x|}+\rme^{a x'}(1+\rme^{b x'}) \1\{b >0\} \right. \\
& \quad \quad \left. +\rme^{-ax'}(1+\rme^{-b x'}) \1\{b <0\}\right)\\
&\leq \rme^{|d|} \left( \rme^{|a||x'|}+1+\rme^{|b| |x|}+\rme^{|a| |x'|}  +\rme^{|a + b| |x'|} \right)\\
&\leq \rme^{|d|} \left( 1+4 \rme^{\gamma(|x|\vee|x'|)} \right)\eqsp,
\end{align*}
where $\gamma=|a|\vee |b|\vee |a+b|<1$. The first term of the \rhs\ of \eqref{eq:logPoiss:technicos:three} is treated as the second term by setting $x'=x$. We then have
$$
\Ecan(\rme^{|X_1|}) \leq \rme^{|d|} \left( 1+4 \rme^{\gamma(|x|\vee|x'|)} \right)\eqsp, \\
$$
so that using \eqref{eq:logPoiss:technicos:three},
\begin{equation}\label{eq:logPoiss:technicos:two}
Q^\sharp W(x,x') \leq 2 \rme^{|d|} \left( 1+4 \rme^{\gamma(|x|\vee|x'|)} \right) \eqsp.
\end{equation}
Since $\gamma \in (0,1)$ and $W(x,x')=\rme^{|x|\vee |x'|}$, \eqref{eq:logPoiss:technicos:two} implies clearly \eqref{eq:logPoiss:limQW/W} and \eqref{eq:logPoiss:bound}. The proof is completed.
\end{proof}

\begin{proposition} \label{prop:logPoiss:unicMesInv}
If $|a+b|\vee |a| \vee |b|<1$, the Markov kernel $Q$ admits a unique invariant probability measure. Moreover, $\pi V<\infty$ where $V(x)=\rme^{|x|}$.
\end{proposition}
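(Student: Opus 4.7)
The plan is to apply \autoref{thm:uniqueInvariantProbaMeasure}. Since \refhyp{assum:asympStrgFeller:general} has already been verified in \autoref{lem:logPoiss:asf}, the remaining work is to check \refhyp{assum:weakFeller} and \refhyp{assum:reachable}; the moment bound will then follow from \autoref{lem:piV}.

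For \refhyp{assum:weakFeller}, the weak Feller property is obtained by a Slutsky argument analogous to the one in \autoref{prop:threshold:unique}: if $x_n \to x$, then $\mcp(\rme^{x_n}) \to \mcp(\rme^x)$ weakly (the pointwise probabilities $\rme^{-\lambda}\lambda^k/k!$ are continuous in $\lambda$), and $y \mapsto d+ax+b\ln(1+y)$ is continuous, so $X_1(x_n)$ converges weakly to $X_1(x)$. For the drift, take $V(x)=\rme^{|x|}$. The key observation is that $Q^\sharp$ on the diagonal $x=x'$ reproduces the transition $Q$, so the computation already carried out in \autoref{lem:logPoiss:asf} specialises to
$$
QV(x) = \PE\bigl[\rme^{|d+ax+b\ln(1+Y)|}\bigr] \leq \rme^{|d|}\bigl(1 + 4\rme^{\gamma|x|}\bigr),\qquad \gamma = |a+b|\vee|a|\vee|b| < 1,
$$
with $Y\sim\mcp(\rme^{x})$. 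Hence $QV(x)/V(x) \to 0$ as $|x|\to \infty$, which yields $QV \leq \lambda V + \beta$ for some $\lambda \in (0,1)$ and $\beta \geq 0$. Since the sublevel sets of $V$ are compact, taking $C = \{V\leq M\}$ with $M$ large enough rewrites this geometric drift as the additive form $QV \leq V - \epsilon + b\1_C$ demanded by \refhyp{assum:weakFeller}.

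For \refhyp{assum:reachable}, let $x_\infty = d/(1-a)$ be the fixed point of the deterministic map $x \mapsto d+ax$ (the update rule when $Y=0$). For any $x \in \rset$, iterating produces $x_0 = x$, $x_{k+1} = d+ax_k \to x_\infty$, so every open neighborhood $A$ of $x_\infty$ contains $x_n$ for some $n$. On the event $\{Y_1 = \cdots = Y_n = 0\}$, which has positive probability since each conditional factor $\rme^{-\rme^{X_k}}$ is strictly positive, one has $X_n = x_n \in A$. Therefore $Q^n(x,A) > 0$, verifying \refhyp{assum:reachable}.

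With all three assumptions in hand, \autoref{thm:uniqueInvariantProbaMeasure} gives the existence and uniqueness of the invariant probability measure $\pi$, and \autoref{lem:piV} applied to $QV \leq \lambda V + \beta$ concludes $\pi V \leq \beta/(1-\lambda) < \infty$. The only mildly delicate ingredient is recycling the coupling estimate from \autoref{lem:logPoiss:asf}; once one notices that $Q^\sharp$ on the diagonal coincides with $Q$, the drift for $Q$ comes for free and no fresh calculation is required.
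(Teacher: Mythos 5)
Your proof is correct and follows essentially the same route as the paper: verify \refhyp{assum:weakFeller} via weak convergence of $\mcp(\rme^{x})$ and the drift $QV(x)\leq \rme^{|d|}(1+4\rme^{\gamma|x|})$ for $V(x)=\rme^{|x|}$ recycled from \autoref{lem:logPoiss:asf}, verify \refhyp{assum:reachable} through the zero-observation path converging to $d/(1-a)$, then invoke \autoref{thm:uniqueInvariantProbaMeasure} and \autoref{lem:piV}. Your remark that $Q^\sharp$ restricted to the diagonal coincides with $Q$, and your explicit passage from $QV\leq\lambda V+\beta$ to the additive form $QV\leq V-\epsilon+b\1_C$ with $C=\{V\leq M\}$ compact, merely make explicit what the paper leaves as ``following the lines of'' the lemma.
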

\begin{remark}
\cite[Lemma 2.1]{fokianos:tjostheim:2011} have obtained that the Log-linear Poisson autoregression is close to a "perturbed" ergodic Log-linear Poisson process in the case where $a^2+b^2<1$ if $a$ and $b$ have different signs and $|a +b|<1$  otherwise. In both cases, we have $|a+b|<1$. In fact, if $a^2+b^2<1$, then $|a|\vee |b|<1$. Combining it with the fact that $a\wedge b \leq a+b \leq a \vee b$ when $a$ and $b$ have different signs, we obtain $|a+b|<1$. Our conditions thus extends conditions of \cite{fokianos:tjostheim:2011} and the results obtained here address an open question raised in \cite[page 566]{fokianos:tjostheim:2011}.
\end{remark}

\begin{proof}
According to \autoref{thm:uniqueInvariantProbaMeasure} and \autoref{lem:logPoiss:asf}, it is enough to show \refhyps{assum:weakFeller}{assum:reachable}. We consider first \refhyp{assum:weakFeller}. As above,
$X_1(x) = d +a x + b \ln(1+N(\rme^{x}))$
converges weakly to $X_1(x')$ as $x \to x'$. Therefore, $Q$ is weakly Feller. Moreover, following the lines of \autoref{lem:logPoiss:asf}, it can be readily checked that the function $V(x)=\rme^{|x|}$ satisfies:
$$
Q V(x) \leq \rme^{|d|} \left( 1+4 \rme^{\gamma(|x|)} \right)\eqsp,
$$
where $\gamma=|a+b|\vee |a| \vee |b|<1$. Thus,
\begin{equation}
\label{eq:logPoiss:drift}
QV(x) \leq \lambda V(x) +\beta\eqsp,
\end{equation}
 for some constants $(\lambda,\beta) \in (0,1) \times \rset^+$ showing \refhyp{assum:weakFeller}. Consider now $x=d/(1-a)$. Let $x \in \rset$ and let $C$ be an open set containing $x$. Then, by setting $x_0=x$ and for all $k \geq 1$, $x_k=d+a x_{k-1}$, we have $\lim_{n \to \infty} x_n=x$ so that there exists some $n$ such that for all $k\geq n$, $x_k \in C$. For such $n$, we have
\begin{multline*}
Q^n(x,C)=\Pcan[\delta_x][Q](X_n \in C)\geq \Pcan[\delta_x][Q](X_n \in C,\ Y_1=\ldots=Y_{n}=0) \\
=\Pcan[\delta_x][Q](Y_1=\ldots=Y_{n}=0)>0\eqsp.
\end{multline*}
so that \refhyp{assum:reachable} holds. Since \eqref{eq:logPoiss:drift} holds for the function $V(x)=\rme^{|x|}$,  \autoref{lem:piV} shows that $\pi V <\infty$. The proof follows.
\end{proof}

\section{Consistency of the Maximum Likelihood Estimator}
\label{sec:consist}
\subsection{Misspecified models}
Let $(\Theta,\mathsf{d})$ be a compact metric set of $\rset^p$, let $H$ be a Markov kernel from $(\Xset,\Xsigma)$ to $(\Yset,\Ysigma)$ and let $\{(x,y) \mapsto f^\theta_y(x)\, ,\, \theta \in \Theta\}$ be a family of measurable functions from $(\Xset\times \Yset, \Xsigma \otimes \Ysigma)$ to $(\Xset, \Xsigma)$. Assume that all $x \in \Xset$, $H(x;\cdot)$ is dominated by some $\sigma$-finite measure $\mu$ on $(\Yset,\Ysigma)$ and denote by $h(x;\cdot)$ its Radon-Nikodym derivative: $h(x;y)=\rmd H(x;\cdot)/\rmd \mu(y)$. Assume that $h(x;y)>0$ for all $(x,y) \in \Xset \times \Yset$ and that the sequence of random variables $\{(X_k,Y_k)\, ; \, k \in \nset\}$ satisfy the following recursions
\begin{align}
& Y_{k+1}| \mcf_k\sim H(X_k;\cdot) \eqsp, \nonumber \\
& X_{k+1}=f^\theta_{Y_{k+1}}(X_k) \eqsp, \label{eq:def:XY:theta}
\end{align}
where $\mcf_k$ is either  $\sigma(\chunk{X}{0}{k},\chunk{Y}{0}{k})$ or $\sigma(\chunk{X}{-\infty}{k},\chunk{Y}{-\infty}{k})$, depending whether the process is defined on $\nset$ or $\zset$.
Then,  the distribution of $(Y_1,\ldots,Y_{n})$ conditionally on $X_0=x$ has a density with respect to the product measure $\mu^{\otimes n}$ given by
\begin{equation}
\label{eq:lkd:Y}
\chunk{y}{1}{n} \mapsto \prod_{k=1}^{n} h(\f{\chunk{y}{1}{k-1}} (x);y_k) \eqsp,
\end{equation}
where we have used the convention $\f{\chunk{y}{1}{0}} (x)=x$ and the notations \begin{equation}
\label{eq:notationItere:f}
\f{\chunk{y}{s}{t}}=f^\theta_{y_t} \circ f^\theta_{y_{t-1}} \circ \dots \circ f^\theta_{y_s}\, , \quad s\leq t\eqsp.
\end{equation}

In this section, we study  the asymptotic properties of $\mlY{n,x}$, the conditional Maximum Likelihood Estimator (MLE) of the parameter $\theta$ based on the observations $(Y_1,\ldots, Y_n)$ and associated to the parametric family of likelihood functions given in \eqref{eq:lkd:Y}, that is, we consider
\begin{equation}
\label{eq:defi:mle}
\mlY{n,x} \in \argmax_{\theta \in \Theta} \lkdM[n,x]{\chunk{Y}{1}{n}}[\theta]\eqsp,
\end{equation}
where
\begin{equation}
\label{eq:defi:lkdM}
\lkdM[n,x]{\chunk{y}{1}{n}}[\theta]\eqdef   n^{-1}\ln \left( \prod_{k=1}^{n} h(\f{\chunk{y}{1}{k-1}} (x);y_k) \right)\eqsp.
\end{equation}
We are especially interested here in inference for {\em misspecified models}, that is, we {\em do not assume} that the distribution of the observations belongs to the set of distributions where the maximization occurs. In particular, $(Y_n)_{n \in \zset}$ are not necessarily the observation process associated to the recursion \eqref{eq:def:XY:theta}.

Consider the following assumptions:
\begin{hyp}{B} \label{assum:stat:ergo:Y}
$\sequence{Y}$ is a strict-sense stationary and ergodic stochastic process
\end{hyp}
Under \refhyp[B]{assum:stat:ergo:Y}, denote by $\PP_\star$ the distribution of $\sequence{Y}$ on $(\Yset^\zset,\Ysigma^{\zset})$. Write $\PE_\star$ the associated expectation.
\begin{hyp}{B} \label{assum:continuity:Y}
For all $(x,y) \in  \Xset \times \Yset$, the functions $\theta \mapsto f^\theta_y(x)$ and $v\mapsto h(v,y)$ are continuous.
\end{hyp}

\begin{hyp}{B} \label{assum:limit:Y}
There exists a family of $\PP_\star\as$ finite random variables  $$
\set{\f{\chunk{Y}{-\infty}{k}}}{ (\theta,k) \in \Theta \times \zset }
$$
 such that for all $x \in \Xset$,
\begin{enumerate}[(i)]
\item \label{item:lim:moins:infty} $\lim_{m \to \infty} \sup_{\theta \in \Theta} d(\f{\chunk{Y}{-m}{0}}(x),\f{\chunk{Y}{-\infty}{0}}) =0\,, \eqsp \PP_\star \as$,
\item  \label{item:lim:k} $\PP_\star \as$,
 $$
\lim_{k \to \infty} \sup_{\theta \in \Theta} |\ln h(\f{\chunk{Y}{1}{k-1}}(x);Y_k)- \ln h(\f{\chunk{Y}{-\infty}{k-1}};Y_k)| =0\eqsp,
$$
\item \label{item:momentCond}
$
\PE_\star \left[\sup_{\theta \in \Theta} \left(\ln h(\f{\chunk{Y}{-\infty}{k-1}};Y_k) \right)_+\right]< \infty
$
\end{enumerate}
\end{hyp}
In the following, we set for all $(\theta, k ) \in \Theta \times \nset$,
\begin{equation}
\label{eq:def:lkdStat}
\lkdStat{\chunk{Y}{-\infty}{k}}[\theta]\eqdef \ln h(\f{\chunk{Y}{-\infty}{k-1}};Y_k) \eqsp.
\end{equation}

\begin{remark}
When checking \refhyp[B]{assum:limit:Y}, we usually introduce $\f{\chunk{Y}{-\infty}{0}}$ by showing that for all $(\theta,x)\in \Theta \times \Xset$,  $\f{\chunk{Y}{-m}{-1}}(x)$ converges, $\PP_\star\as$, as $m$ goes to infinity and that the limit does not depend on $x$. We can therefore denote by $\f{\chunk{Y}{-\infty}{0}}$ this limit. With this definition, we then check  \refhyp[B]{assum:limit:Y}\eqref{item:lim:moins:infty}-\eqref{item:lim:k}-\eqref{item:momentCond}.
\end{remark}
\begin{remark} \label{rem:integerValued}
When the observation process is integer-valued, the function $y \to h(x;y)$ is a probability and thus, is less than one. It implies  that  for all $\theta \in \Theta$,
$$
\left(\lkdStat{\chunk{Y}{-\infty}{0}}[\theta] \right)_+=(\ln h(\f{\chunk{Y}{-\infty}{-1}};Y_0))_+=0\eqsp.
$$
Thus,  \refhyp[B]{assum:limit:Y}-\eqref{item:momentCond} is satisfied.
\end{remark}
Note that under \refhyp[B]{assum:continuity:Y}, $\mlY{n,x}$ is well-defined.
The following theorem establishes the consistency of the sequence of estimators $\{\mlY{n,x}\,, \, n \in \nset\}$.
\begin{theorem} \label{thm:consist:misspecified}
Assume \refhyps[B]{assum:stat:ergo:Y}{assum:limit:Y}. Then, for all $x \in \Xset$,
$$
\lim_{n \to \infty} \mathsf{d}(\mlY{n,x},\Theta_\star) =0\, , \quad \PP_\star\as
$$
where $\Theta_\star \eqdef \argmax_{\theta \in \Theta} \PE(\lkdStat{\chunk{Y}{-\infty}{0}}[\theta])$.
\end{theorem}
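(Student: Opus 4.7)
The plan is to run a Wald-style consistency argument, adapted to the stationary ergodic setting and the possibility that the limiting criterion takes the value $-\infty$ (because of misspecification and the one-sided integrability \refhyp[B]{assum:limit:Y}\eqref{item:momentCond}). Set $L(\theta) := \PE_\star[\lkdStat{\chunk{Y}{-\infty}{0}}[\theta]]$, which is well-defined in $[-\infty,+\infty)$ under \refhyp[B]{assum:limit:Y}\eqref{item:momentCond}, and note that $\Theta_\star = \argmax_{\theta \in \Theta} L(\theta)$. The goal is to show that $\mathsf d(\mlY{n,x},\Theta_\star) \to 0$ a.s.

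First I would prove a uniform approximation of the conditional log-likelihood by its stationary version:
\begin{equation*}
\lim_{n \to \infty} \sup_{\theta \in \Theta} \Big| \lkdM[n,x]{\chunk{Y}{1}{n}}[\theta] - \frac{1}{n} \sum_{k=1}^n \lkdStat{\chunk{Y}{-\infty}{k}}[\theta] \Big| = 0, \quad \PP_\star\as
\end{equation*}
This is a Cesàro argument: by \refhyp[B]{assum:limit:Y}\eqref{item:lim:k}, the $\sup_\theta$ of each summand difference tends to $0$ as $k\to\infty$, hence so does the average.

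Second, fix any $\theta \in \Theta$ and apply Birkhoff's ergodic theorem to the stationary ergodic sequence $\{\lkdStat{\chunk{Y}{-\infty}{k}}[\theta]\}_{k \in \zset}$ (stationarity and ergodicity come from \refhyp[B]{assum:stat:ergo:Y}, and the one-sided integrability from \refhyp[B]{assum:limit:Y}\eqref{item:momentCond} makes the extended-valued ergodic theorem applicable) to get $n^{-1} \sum_{k=1}^n \lkdStat{\chunk{Y}{-\infty}{k}}[\theta] \to L(\theta)$, $\PP_\star\as$ Combined with the first step, this yields $\lkdM[n,x]{\chunk{Y}{1}{n}}[\theta] \to L(\theta)$ pointwise in $\theta$.

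Next comes the core step, a local-uniform upper bound. Using \refhyp[B]{assum:continuity:Y} together with \refhyp[B]{assum:limit:Y}\eqref{item:lim:moins:infty}, the map $\theta \mapsto \lkdStat{\chunk{Y}{-\infty}{0}}[\theta]$ is continuous $\PP_\star\as$ (continuity of $\f{\chunk{Y}{-m}{0}}$ in $\theta$, uniform in $m$, transfers to the limit, and $h$ is continuous in its first argument). For every $\theta^\star \in \Theta$, the suprema $\sup_{\theta \in \ball{\theta^\star}{1/m}} \lkdStat{\chunk{Y}{-\infty}{0}}[\theta]$ decrease to $\lkdStat{\chunk{Y}{-\infty}{0}}[\theta^\star]$ as $m\to\infty$; since the positive part is integrable by \refhyp[B]{assum:limit:Y}\eqref{item:momentCond}, monotone convergence gives
\begin{equation*}
\lim_{m \to \infty} \PE_\star\Big[\sup_{\theta \in \ball{\theta^\star}{1/m}} \lkdStat{\chunk{Y}{-\infty}{0}}[\theta] \Big] = L(\theta^\star) \in [-\infty,+\infty).
\end{equation*}
Applying Birkhoff once more to the stationary ergodic process $\sup_{\theta \in \ball{\theta^\star}{1/m}} \lkdStat{\chunk{Y}{-\infty}{k}}[\theta]$ produces, for every $\varepsilon>0$, a neighborhood $V_{\theta^\star}$ such that $\limsup_n \sup_{\theta \in V_{\theta^\star}} \lkdM[n,x]{\chunk{Y}{1}{n}}[\theta] \leq L(\theta^\star) + \varepsilon$ almost surely. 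By compactness of $\Theta$ and hence of $\Theta \setminus U$ for any open $U \supset \Theta_\star$, a finite subcover argument yields $\limsup_n \sup_{\theta \in \Theta \setminus U} \lkdM[n,x]{\chunk{Y}{1}{n}}[\theta] < L^\star := \max_\Theta L$, $\PP_\star\as$ Coupled with the lower bound $\lkdM[n,x]{\chunk{Y}{1}{n}}[\mlY{n,x}] \geq \lkdM[n,x]{\chunk{Y}{1}{n}}[\theta^\star] \to L^\star$ for any fixed $\theta^\star \in \Theta_\star$, this forces $\mlY{n,x} \in U$ eventually, proving the claim.

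The main obstacle is the last step: the potential non-integrability of the negative part means $L$ is only upper semicontinuous and extended-real-valued, so one cannot hope for uniform convergence on all of $\Theta$, and the argument must proceed via the local upper envelopes combined with the compactness covering trick. The other delicate technical point is keeping the almost sure statements consistent across uncountably many $\theta$, which the monotone-convergence-on-shrinking-balls device (needing only countably many $m$) is designed to handle.
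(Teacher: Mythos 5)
Your proof is correct and takes essentially the same route as the paper: the paper checks exactly your ingredients (positive-part integrability, upper-semicontinuity of $\theta\mapsto\lkdStat{\chunk{Y}{-\infty}{0}}[\theta]$ from \refhyp[B]{assum:continuity:Y} and \refhyp[B]{assum:limit:Y}\eqref{item:lim:moins:infty}, and the Ces\`aro uniform approximation from \refhyp[B]{assum:limit:Y}\eqref{item:lim:k}) and then invokes \autoref{thm:consist-appendix}, whose proof is precisely your Birkhoff-with-one-sided-integrability, shrinking-ball monotone convergence, finite-subcover and sandwich argument. In effect you have inlined the appendix result (\autoref{thm:consist-appendix} together with \autoref{prop:ergo-unique}) rather than citing it.
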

\begin{proof}
The proof directly follows from \autoref{thm:consist-appendix} provided that
\begin{enumerate}[(a)]
\item \label{item:one}$\PE_\star[\sup_{\theta \in \Theta} (\lkdStat{\chunk{Y}{-\infty}{0}}[\theta])_+]<\infty$,
\item \label{item:two}$\PP_\star\as$, the function $\theta \mapsto \lkdStat{\chunk{Y}{-\infty}{0}}[\theta]$ is upper-semicontinuous,
\item \label{item:three} $\lim_{n \to \infty} \sup_{\theta \in \Theta} |\lkdM[n,x]{\chunk{Y}{1}{n}}[\theta]-\lkdMStat[n]{\chunk{Y}{-\infty}{n}}[\theta]|=0$, $\PP_\star\as$ where
    $$
    \lkdMStat[n]{\chunk{Y}{-\infty}{n}}[\theta]=n^{-1}\sum_{k=1}^n \lkdStat{\chunk{Y}{-\infty}{k}}[\theta]\eqsp.
    $$
\end{enumerate}
But \eqref{item:one} follows from \eqref{item:momentCond}, \eqref{item:two} follows by combining \eqref{item:lim:moins:infty} and \refhyp[B]{assum:continuity:Y} since a uniform limit of continuous functions is continuous and  \eqref{item:three} is direct from \eqref{item:lim:k} and the definitions of $\lkdM[n,x]{\chunk{Y}{1}{n}}[\theta]$ and $\lkdMStat[n]{\chunk{Y}{-\infty}{n}}[\theta]$. The proof is completed.
\end{proof}
We end this section by providing a practical condition for checking the assumption \refhyp[B]{assum:limit:Y} when $x \mapsto f^\theta_y(x)$ is Lipshitz.
\begin{lemma} \label{lem:lipshitz}
Assume that there exists a measurable function $\varrho:\Yset \to  \rset^+$ such that for all $(\theta,y,x,x') \in \Theta \times \Yset \times \Xset^2$,
$$
d(f^\theta_y(x),f^\theta_y(x'))\leq \varrho(y) d(x,x') \eqsp.
$$
Moreover, assume that for all $x \in \Xset$,
\begin{equation*}
\text{$\PE_\star\left[\sup_{\theta \in \Theta}\ln^+ d(x,f^\theta_{Y_0}(x))\right]<\infty$,
$\PE_\star(\ln^+ \varrho(Y))<\infty$, and $\PE_\star(\ln \varrho(Y))<0$.}
\end{equation*}
Then, assumption \refhyp[B]{assum:limit:Y}-\eqref{item:lim:moins:infty} holds.
\end{lemma}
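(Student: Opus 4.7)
My plan is to construct $\f{\chunk{Y}{-\infty}{0}}$ as the $\PP_\star\as$ uniform-in-$\theta$ limit of a Cauchy sequence, exploiting the Lipshitz hypothesis together with the negative Lyapunov condition $\PE_\star(\ln \varrho(Y))<0$. Using \eqref{eq:notationItere:f} to write $\f{\chunk{Y}{-m-1}{0}}(x) = \f{\chunk{Y}{-m}{0}}(f^\theta_{Y_{-m-1}}(x))$ and applying the Lipshitz bound $m+1$ times, I get the one-step telescoping estimate
\[
A_m \eqdef \sup_{\theta \in \Theta} d\bigl(\f{\chunk{Y}{-m}{0}}(x),\f{\chunk{Y}{-m-1}{0}}(x)\bigr) \leq \Bigl(\prod_{k=0}^{m} \varrho(Y_{-k})\Bigr)\, \sup_{\theta \in \Theta} d\bigl(x, f^\theta_{Y_{-m-1}}(x)\bigr)\eqsp.
\]
The supremum passes cleanly through the product because $\varrho$ does not depend on $\theta$.

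Next I would show $\sum_m A_m < \infty$ $\PP_\star\as$ Taking logarithms, the pointwise ergodic theorem applied to the stationary ergodic sequence $\{\ln \varrho(Y_{-k})\}_{k\in\zset}$ (which is well-defined as an extended real since $\PE_\star(\ln^+ \varrho(Y))<\infty$) yields $(m+1)^{-1}\sum_{k=0}^{m}\ln\varrho(Y_{-k}) \to \PE_\star(\ln \varrho(Y)) < 0$ $\PP_\star\as$, so $\prod_{k=0}^{m}\varrho(Y_{-k})$ decays exponentially on an a.s.\ set. Moreover, the integrability hypothesis $\PE_\star[\sup_\theta \ln^+ d(x,f^\theta_{Y_0}(x))]<\infty$ combined with stationarity and the first Borel--Cantelli lemma (summing $\PP_\star(\sup_\theta \ln^+ d(x,f^\theta_{Y_0}(x)) \geq \epsilon m)$ in $m$ for arbitrary $\epsilon>0$) gives $m^{-1}\sup_\theta \ln^+ d(x,f^\theta_{Y_{-m-1}}(x)) \to 0$ $\PP_\star\as$ Combining, $\ln A_m \leq -cm$ for all large $m$ and some $c>0$ on an almost sure set, hence summability.

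Given $\sum_m A_m<\infty$, the Cauchy criterion $\sup_\theta d(\f{\chunk{Y}{-m}{0}}(x), \f{\chunk{Y}{-m'}{0}}(x)) \leq \sum_{k=m}^{m'-1} A_k$ together with completeness of $(\Xset,d)$ produces a $\PP_\star\as$ uniform-in-$\theta$ limit, which I take as the definition of $\f{\chunk{Y}{-\infty}{0}}$. That the limit does not depend on the starting point $x$ follows from the same style of Lipshitz estimate: $\sup_\theta d(\f{\chunk{Y}{-m}{0}}(x),\f{\chunk{Y}{-m}{0}}(x'))\leq \prod_{k=0}^{m}\varrho(Y_{-k})\cdot d(x,x')\to 0$ $\PP_\star\as$, so any two initial points produce the same limit. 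This is exactly the content of \refhyp[B]{assum:limit:Y}\eqref{item:lim:moins:infty}.

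The main obstacle is obtaining the $o(m)$ bound on the forcing term $\sup_\theta \ln^+ d(x,f^\theta_{Y_{-m-1}}(x))$ \emph{uniformly} in $\theta$. The cleanest route uses that the integrability assumption is already stated for the supremum over $\theta$, so a single Borel--Cantelli argument handles every $\theta$ simultaneously and no measurable-selection issue arises; everything else reduces to Birkhoff and telescoping.
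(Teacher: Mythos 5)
Your proposal is correct and follows essentially the same route as the paper: the same telescoping Lipshitz estimate, the ergodic theorem for the product $\prod_{k=0}^m \varrho(Y_{-k})$, and a Borel--Cantelli control of $\sup_{\theta\in\Theta}\ln^+ d(x,f^\theta_{Y_{-m-1}}(x))$ (which the paper packages as \autoref{lem:useful} and combines with the Cauchy root test rather than your direct exponential bound). The conclusion via summability, completeness, and the $x$-independence argument is identical.
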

\begin{proof}
We have for all $m\geq 0$,
\begin{equation}
\label{eq:diff:x:y}
d(\f[\theta]{\chunk{Y}{-m}{0}}(x),\f[\theta]{\chunk{Y}{-m}{0}}(y)) \leq d(x,y) \prod_{\ell=0}^{m} \varrho(Y_{-\ell})
\end{equation}
Taking $y=f^\theta_{Y_{-m-1}}(x)$, we obtain
$$
d(\f[\theta]{\chunk{Y}{-m}{0}}(x),\f[\theta]{\chunk{Y}{-m-1}{0}}(x)) \leq d(x,f^\theta_{Y_{-m-1}}(x))   \prod_{\ell=0}^{m} \left[\varrho(Y_{-\ell})\right]
$$
Now, since $\PE_\thv(\ln \varrho(Y_0))<0$, $\limsup_{m \to \infty} \left( \prod_{\ell=0}^{m} \left[\varrho(Y_{-\ell})\right]\right)^{1/m}<1$
and \autoref{lem:useful} implies that
$$
\limsup_{m \to \infty} \left(  \sup_{\theta \in \Theta} d(x,f^\theta_{Y_{-m-1}}(x))
\right)^{1/m}\leq 1 .
$$
By the Cauchy root test, the series
$
\sum \sup_{\theta \in \Theta} d(\f[\theta]{\chunk{Y}{-m}{0}}(x),\f[\theta]{\chunk{Y}{-m+1}{0}}(x))
$
is convergent. This implies that
$\lim_{m \to \infty} \f[\theta]{\chunk{Y}{-m}{0}}(x)$ exists, $\PP_\star\as$ which
does not depend on $x$ by \eqref{eq:diff:x:y}.
This limit is denoted $\f[\theta]{\chunk{Y}{-\infty}{0}}$. The convergence of the series
also implies
$$
\lim_{m \to \infty} \sup_{\theta \in \Theta}d(\f[\theta]{\chunk{Y}{-m}{0}}(x),\f[\theta]{\chunk{Y}{-\infty}{0}}) = 0 \eqsp, \quad \PP_\star\as
$$
so that  \refhyp[B]{assum:limit:Y}-\eqref{item:lim:moins:infty} holds.
\end{proof}
\subsection{Well-specified models}
In this section, we focus on well-specified models, that is, \sequence{Y}[n][\nset] are assumed to be the observation process of a model defined by the recursions \eqref{eq:def:XY:theta} with $\theta=\thv \in \Theta$. In well-specified models, we stress the dependence in $\thv$ by using the notations
\begin{equation}
\label{eq:notation:well:spec}
\PP^\thv\eqdef \PP_\star\, , \quad \PE^\thv \eqdef \PE_\star\eqsp.
\end{equation}
%
According to \autoref{sec:exist:unic}, to obtain \refhyp[B]{assum:stat:ergo:Y}, we only need to check that, for all $\theta \in\Theta$, \refhyps[A]{assum:weakFeller}{assum:asympStrgFeller:general} hold with $f=f^{\theta}$. If in addition, we assume that \refhyps[B]{assum:continuity:Y}{assum:limit:Y} hold and that $\Theta_\star=\{\thv\}$, then, \autoref{thm:consist:misspecified} yields: for all $x\in\Xset$,
$$
\lim_{n \to \infty} \mlY{n,x}=\thv\, , \quad \PP^\thv\as
$$
We now give conditions for having $\Theta_\star=\{\thv\}$.
\begin{proposition} \label{prop:ident}
Let $\{(X_k,Y_k)\, , \, k \in \zset\}$ be a stationary stochastic process indexed by $\zset$ which satisfies the recursions \eqref{eq:def:XY:theta} for some $\theta=\thv \in \Theta$ with $\mcf_k=\sigma(X_\ell,\, Y_\ell\, ; \, \ell \leq n\,, \ell \in \zset)$. Assume that \refhyps[B]{assum:stat:ergo:Y}{assum:limit:Y} hold and that $X_0=\f[\thv]{\chunk{Y}{-\infty}{0}}$ then, $H(X_0;\cdot)$ is the distribution of $Y_1$ conditionally on $\sigma(Y_\ell\, ; \ell \leq 0)$. If in addition,
\begin{enumerate}[(a)]
\item \label{item:ident:H} $x \mapsto H(x;\cdot)$ is one-to-one, \ie, if $H(x;\cdot) = H(x',\cdot)$, then $x=x'$,
\item \label{item:ident:f}$\f[\thv]{\chunk{Y}{-\infty}{0}}=\f[\theta]{\chunk{Y}{-\infty}{0}},\eqsp\PP^\thv\as$,  implies that $\theta=\thv$,
\end{enumerate}
then $\Theta_\star=\{\thv\}$.
\end{proposition}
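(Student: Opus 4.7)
The strategy is standard maximum-likelihood identifiability via a conditional Jensen (Gibbs) inequality applied inside the expectation defining $\Theta_\star$, followed by injectivity of $x\mapsto H(x;\cdot)$ and of $\theta\mapsto \f[\theta]{\chunk{Y}{-\infty}{0}}$ to pull the identification back to the parameter level.

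\textbf{Step 1: the preliminary conditional-distribution claim.} Since $X_0=\f[\thv]{\chunk{Y}{-\infty}{0}}$, the random variable $X_0$ is $\sigma(Y_\ell\,;\,\ell\le 0)$-measurable. The recursion gives $Y_1\mid \mcf_0\sim H(X_0;\cdot)$, and because $\sigma(Y_\ell\,;\,\ell\le 0)\subset\mcf_0$ with $H(X_0;B)$ already measurable w.r.t.\ the smaller $\sigma$-field, iterated conditioning yields $\PE^{\thv}[\1_B(Y_1)\mid\sigma(Y_\ell\,;\,\ell\le 0)]=H(X_0;B)$. This is exactly the first conclusion of the proposition.

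\textbf{Step 2: Gibbs inequality for the limiting criterion.} By stationarity of $\sequence{Y}$,
\[
\PE^{\thv}\bigl[\lkdStat{\chunk{Y}{-\infty}{0}}[\theta]\bigr]=\PE^{\thv}\bigl[\lkdStat{\chunk{Y}{-\infty}{1}}[\theta]\bigr]=\PE^{\thv}\bigl[\ln h(\f{\chunk{Y}{-\infty}{0}};Y_1)\bigr].
\]
Conditioning on $\sigma(Y_\ell\,;\,\ell\le 0)$, using Step 1 and the fact that $\f{\chunk{Y}{-\infty}{0}}$ is measurable with respect to this $\sigma$-field, the conditional expectation equals
\[
\int \ln h\bigl(\f{\chunk{Y}{-\infty}{0}};y\bigr)\,h\bigl(\f[\thv]{\chunk{Y}{-\infty}{0}};y\bigr)\,\mu(\rmd y).
\]
Applying Jensen's inequality to $\ln$ against the probability density $h(\f[\thv]{\chunk{Y}{-\infty}{0}};\cdot)$ (i.e.\ $\int p\ln(q/p)\,\rmd\mu\le 0$ with equality iff $p=q$ $\mu$-a.e.) yields, for every $\theta\in\Theta$,
\[
\PE^{\thv}\bigl[\lkdStat{\chunk{Y}{-\infty}{0}}[\theta]\bigr]\le \PE^{\thv}\bigl[\lkdStat{\chunk{Y}{-\infty}{0}}[\thv]\bigr],
\]
with equality iff, $\PP^{\thv}$\as, $H(\f{\chunk{Y}{-\infty}{0}};\cdot)=H(\f[\thv]{\chunk{Y}{-\infty}{0}};\cdot)$. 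In particular $\thv\in\Theta_\star$.

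\textbf{Step 3: from equality to $\theta=\thv$.} For any $\theta\in\Theta_\star$ the displayed equality case holds, so assumption \eqref{item:ident:H} (injectivity of $x\mapsto H(x;\cdot)$) forces $\f{\chunk{Y}{-\infty}{0}}=\f[\thv]{\chunk{Y}{-\infty}{0}}$, $\PP^{\thv}$\as, and then assumption \eqref{item:ident:f} forces $\theta=\thv$. Hence $\Theta_\star=\{\thv\}$. The only subtlety to check is that the integrals above are well defined (the positive part is integrable by \refhyp[B]{assum:limit:Y}-\eqref{item:momentCond}, while the possibly infinite negative part only strengthens the inequality), so this is not an obstacle; the whole argument is essentially a conditional Kullback--Leibler non-negativity, and the real content of the proposition lies in the two identifiability hypotheses \eqref{item:ident:H}--\eqref{item:ident:f} which I use only at the very last step.
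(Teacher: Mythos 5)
Your proposal is correct and follows essentially the same route as the paper: both establish the conditional-distribution claim by iterated conditioning using the $\sigma(Y_\ell\,;\,\ell\le 0)$-measurability of $X_0$, then identify $\Theta_\star$ via nonnegativity of the conditional Kullback--Leibler divergence (your Gibbs/Jensen step), and finally invoke the two injectivity hypotheses to conclude $\theta=\thv$. Your explicit remarks on the stationarity index shift and on integrability of the positive part are just slightly more detailed versions of steps the paper leaves implicit.
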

\begin{remark}
Condition \autoref{prop:ident}-\eqref{item:ident:f} is similar as \cite[Assumption (A5)]{davis:liu:2012}. For the sake of clarity, we present here a self-contained proof for proving under these conditions that $\Theta_\star=\{\thv\}$.
\end{remark}
\begin{proof}
For all $A \in \Xsigma$,
\begin{multline}
\label{eq:evry2}
\CPEu[\thv]{\1_{A}(Y_1)}{\chunk{Y}{-\infty}{0}}
=\CPEu[\thv]{\CPEu[\thv]{\1_{A}(Y_1)}{X_0,\chunk{Y}{-\infty}{0}}}{\chunk{Y}{-\infty}{0}}\\
=\CPEu[\thv]{\CPEu[\thv]{\1_{A}(Y_1)}{X_0}}{\chunk{Y}{-\infty}{0}}=\CPEu[\thv]{\1_{A}(Y_1)}{X_0}=H(X_0;A)\eqsp,
\end{multline}
where we have used that $X_0$ is $\sigma(Y_\ell\, , \, \ell \leq 0)$-measurable. This concludes the first part of \autoref{prop:ident}. Now, for all $\theta \in\Theta$,
\begin{multline}
\PE^\thv\left(\ln \frac{h(\f[\thv]{\chunk{Y}{-\infty}{0}};Y_1)}{h(\f{\chunk{Y}{-\infty}{0}};Y_1)}\right)\\
=\PE^\thv\left(\CPEu[\thv]{\ln \frac{h(\f[\thv]{\chunk{Y}{-\infty}{0}};Y_1)}
{h(\f{\chunk{Y}{-\infty}{0}};Y_1)}}{\chunk{Y}{-\infty}{0}}\right)\eqsp. \label{eq:logPoiss:kulback}
\end{multline}
Under the stated assumptions,  $H(X_0;\cdot)=H(\f[\thv]{\chunk{Y}{-\infty}{0}};\cdot)$ and \eqref{eq:evry2}
shows that $H(\f[\thv]{\chunk{Y}{-\infty}{0}};\cdot)= \CPPu[\thv]{\cdot}{\chunk{Y}{-\infty}{0}}$.
Therefore, the RHS of \eqref{eq:logPoiss:kulback} is nonnegative as the expectation of a conditional Kullback-Leibler divergence. This shows that $\thv \in\Theta_\star=\argmax_{\theta \in \Theta} \PE^\thv\left(\ln h(\f[\theta]{\chunk{Y}{-\infty}{0}};Y_1)\right)$. Assume now that $\theta \in \Theta_\star$. Then, according to \eqref{eq:logPoiss:kulback},  $\PP^\thv\as$, the probability measures $H(\f[\thv]{\chunk{Y}{-\infty}{0}};\cdot)$ and $H(\f{\chunk{Y}{-\infty}{0}};\cdot)$ are equal,  so that  under \eqref{item:ident:H},
\begin{equation}
\label{eq:equal:f}
\f[\thv]{\chunk{Y}{-\infty}{0}}=\f{\chunk{Y}{-\infty}{0}} \eqsp, \quad \PP^\thv\as
\end{equation}
Under \eqref{item:ident:f}, this implies that $\theta=\thv$.
\end{proof}
\subsection{Examples}

\subsubsection{The Poisson threshold in misspecified models}
Let $K$ be a compact set of $\rset^5$ and let $\Theta$ be the following (compact) set of parameters
\begin{multline}
\label{eq:def-parameter-set}
\Theta= \left\{\theta= (\omega,a,b,c,d) \in K\, :  \right. \\
\left. \min(\omega,a,b,a+c,b+d)\geq \underline{\alpha},\,  a\vee (a+c)\leq \bar{\alpha}<1\right\} \eqsp.
\end{multline}
where $(\underline{\alpha},\bar{\alpha}) \in (0,\infty) \times (0,1)$.
 Assume that the observations $(Y_n)_{n \in \zset}$ are integer-valued and satisfy the following assumptions:
\begin{hyp}{C} \label{assum:stat:Y}
$\sequence{Y}$ is a strict-sense stationary and ergodic stochastic process
\end{hyp}
\begin{hyp}{C} \label{assum:moment:Y}
$
\PE_\star[  \ln(1 + Y_0) ] < \infty\eqsp.
$
\end{hyp}
The Poisson threshold autoregression model described in \autoref{example:Poisson:threshold:defi} may be rewritten as in \eqref{eq:def:XY:theta}, by setting $\Xset=[\underline{\alpha},\infty)$, $\Yset=\nset$ and
\begin{align}
&f^\theta_y(x)=\omega+ax+by+(cx+dy)\1\{y \notin (L,U)\} \eqsp,\label{eq:PoissThres:def:f}\\
&h(x;y)=\frac{\rmd H(x;\cdot)}{\rmd \mu}(y)=\exp(-x) x^y/{y!} \eqsp,\label{eq:PoissThres:def:h}\\
&\theta=(\omega,a,b,c,d)\eqsp, \nonumber
\end{align}
where $\mu$ is the counting measure on $\nset$. Note that
$f^\theta_y(x)=\omega + a^\theta(y) x +b^\theta(y)$
where
$a^\theta(y)=a+c\1\{y \notin (L,U)\}$ and $b^\theta(y)=b y+dy\1\{y \notin (L,U)\}$ so that for all $(\theta,y) \in \Theta \times \Yset$, $|f^\theta_y(x)-f^\theta_y(x')| \leq \bar{\alpha} |x-x'|$.
Moreover, using \eqref{eq:notationItere:f}, we have for all $s\leq t$,
\begin{equation}
\label{eq:logPoiss:itere:f}
\f{\chunk{y}{s}{t}}(x)= x\prod_ {\ell=s}^t a^\theta(y_\ell)+\sum_{j=0}^{t-s} [\omega + b^\theta(y_{t-j})] \prod_{\ell=1}^{j-1}a^\theta(y_{t-\ell}) \eqsp.
\end{equation}
With these definitions, let $\theta_{n,x}$ be the Maximum Likelihood estimator associated to the likelihood function $\lkdM[n,x]{\chunk{Y}{1}{n}}[\theta]$ as defined in  \eqref{eq:defi:mle} and \eqref{eq:defi:lkdM}.
\begin{theorem} \label{thm:PoissThres:misspecified-Consist-MLE}
Assume \refhyps[C]{assum:stat:Y}{assum:moment:Y}. Then, for all $x \in \Xset$,
$\lim_{n \to \infty} \mathsf{d}(\mlY{n,x}, \Theta_\star) = 0$, $\PP_\thv\as$
where
\begin{align}
&\Theta_\star\eqdef\argmax_{\theta \in \Theta} \PE_\star\left( \lkdStat{\chunk{Y}{-\infty}{0}}[\theta]\right)\eqsp, \label{eq:def-Theta-star-Y:PoisThres}
\end{align}
where $\lkdStat{\chunk{Y}{-\infty}{0}}[\theta]$, defined in \eqref{eq:def:lkdStat}, can be written as:
\begin{align}
&\lkdStat{\chunk{Y}{-\infty}{0}}[\theta] =Y_0 \ln(\f{\chunk{Y}{-\infty}{-1}})-\f{\chunk{Y}{-\infty}{-1}}-\ln Y_0!\\
&\f{\chunk{Y}{-\infty}{n}}= \sum_{j=0}^{\infty} [\omega + b^\theta(Y_{n-j})] \prod_{\ell=1}^{j-1}a^\theta(Y_{n-\ell})
\eqsp,\quad \forall (n,\theta) \in \zset \times \Theta\eqsp. \label{eq:def-stat-PoissThres}
\end{align}
\end{theorem}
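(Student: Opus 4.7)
The plan is to apply \autoref{thm:consist:misspecified} to the Poisson threshold model, so what needs to be verified is that \refhyps[C]{assum:stat:Y}{assum:moment:Y} together with the compactness and sign constraints encoded in $\Theta$ imply \refhyps[B]{assum:stat:ergo:Y}{assum:limit:Y}. Assumption \refhyp[B]{assum:stat:ergo:Y} is exactly \refhyp[C]{assum:stat:Y}. For \refhyp[B]{assum:continuity:Y}, the function $\theta \mapsto f^\theta_y(x)$ defined in \eqref{eq:PoissThres:def:f} is polynomial in $\theta$ and hence continuous, and $v \mapsto h(v;y)=\rme^{-v}v^y/y!$ is continuous on the state space $\Xset=[\underline{\alpha},\infty)$, which is bounded away from $0$.

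The bulk of the proof is \refhyp[B]{assum:limit:Y}. I would deduce item \eqref{item:lim:moins:infty} from \autoref{lem:lipshitz} with the constant Lipschitz function $\varrho(y) \equiv \bar{\alpha}$. The contraction bound $|f^\theta_y(x)-f^\theta_y(x')| = |a+c\1\{y \notin (L,U)\}|\,|x-x'| \leq \bar{\alpha}|x-x'|$ uses only the definition of $\Theta$; one has $\PE_\star[\ln^+\varrho(Y_0)]=0$ and $\PE_\star[\ln \varrho(Y_0)]=\ln \bar\alpha<0$ trivially. The remaining moment condition reduces, via $\sup_{\theta\in\Theta}d(x,f^\theta_{Y_0}(x))\leq C_1(x)+C_2 Y_0$ for constants determined by the compact $\Theta$, to $\PE_\star[\ln(1+Y_0)]<\infty$, which is \refhyp[C]{assum:moment:Y}. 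This produces the $\PP_\star$-a.s.\ finite limit $\f{\chunk{Y}{-\infty}{0}}$.

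For item \eqref{item:lim:k}, iterating the uniform contraction and using the identity $\f{\chunk{Y}{1}{k-1}}(\f{\chunk{Y}{-\infty}{0}})=\f{\chunk{Y}{-\infty}{k-1}}$ (obtained by continuity from $\f{\chunk{Y}{-m}{k-1}}(x)=\f{\chunk{Y}{1}{k-1}}\circ\f{\chunk{Y}{-m}{0}}(x)$) yields
\[
\sup_{\theta\in\Theta}\bigl|\f{\chunk{Y}{1}{k-1}}(x)-\f{\chunk{Y}{-\infty}{k-1}}\bigr| \leq \bar{\alpha}^{k-1}\sup_{\theta\in\Theta}\bigl|x-\f{\chunk{Y}{-\infty}{0}}\bigr|.
\]
Since both iterates stay in $[\underline{\alpha},\infty)$ and $h$ is the Poisson mass, $|\ln h(v;Y_k)-\ln h(v';Y_k)|\leq (1+Y_k/\underline{\alpha})|v-v'|$, so the quantity to control is bounded by $C\,\bar{\alpha}^{k-1}(1+Y_k)$ times an $\PP_\star$-a.s.\ finite random variable. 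The standard fact that a stationary sequence $X_k$ with $\PE_\star[X_0^+]<\infty$ satisfies $X_k/k \to 0$ a.s., applied to $X_k=\ln(1+Y_k)$ using \refhyp[C]{assum:moment:Y}, gives $\bar{\alpha}^{k-1}(1+Y_k)\to 0$ $\PP_\star$-a.s. Item \eqref{item:momentCond} is automatic by \autoref{rem:integerValued} since $Y_k$ is integer-valued. I expect this tail-control step to be the main technical obstacle; everything else is algebraic.

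The explicit formulas at the end follow by passing to $s\to -\infty$ in \eqref{eq:logPoiss:itere:f}: the leading term $x\prod_{\ell=s}^n a^\theta(Y_\ell)$ vanishes because each factor is at most $\bar{\alpha}<1$, while the series $\sum_{j\geq 0}[\omega+b^\theta(Y_{n-j})]\prod_{\ell=1}^{j-1}a^\theta(Y_{n-\ell})$ has generic term dominated by $C(1+Y_{n-j})\bar{\alpha}^{j-1}$, hence converges $\PP_\star$-a.s.\ by the same stationary tail argument as above. Substituting $v=\f{\chunk{Y}{-\infty}{-1}}$ into $\ln h(v;Y_0)=-v+Y_0\ln v-\ln Y_0!$ gives the stated form of $\lkdStat{\chunk{Y}{-\infty}{0}}[\theta]$, and \autoref{thm:consist:misspecified} concludes.
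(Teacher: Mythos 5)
Your proposal is correct and follows essentially the same route as the paper: reduce to \autoref{thm:consist:misspecified}, get \refhyp[B]{assum:limit:Y}-\eqref{item:lim:moins:infty} from \autoref{lem:lipshitz} with $\varrho\equiv\bar{\alpha}$ under \refhyp[C]{assum:moment:Y}, dispose of \eqref{item:momentCond} via \autoref{rem:integerValued}, and handle \eqref{item:lim:k} by the Lipschitz bound $(1+Y_k/\underline{\alpha})$ on $\ln h$, the geometric contraction of the iterates, and \autoref{lem:useful}. The only cosmetic difference is that you use the composition identity $\f{\chunk{Y}{1}{k-1}}(\f{\chunk{Y}{-\infty}{0}})=\f{\chunk{Y}{-\infty}{k-1}}$ where the paper expands the affine iterates explicitly; both yield the same $\bar{\alpha}^{k-1}$ bound.
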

\begin{proof}
 According to \autoref{thm:consist:misspecified}, it is sufficient to check \refhyps[B]{assum:continuity:Y}{assum:limit:Y}. \refhyp[B]{assum:continuity:Y} clearly holds.
Assumption \refhyp[C]{assum:moment:Y} allows to apply \autoref{lem:lipshitz} so that \refhyp[B]{assum:limit:Y}-\eqref{item:lim:k} holds. Using \autoref{rem:integerValued} shows that
Assumption \refhyp[B]{assum:limit:Y}-\eqref{item:momentCond} is satisfied. It remains to check \refhyp[B]{assum:limit:Y}-\eqref{item:lim:moins:infty}. By \eqref{eq:PoissThres:def:h}, for all $(x,x') \in [\underline{\alpha},\infty)^2$ and $y \in \Yset$,
\begin{align*}
|\ln h(x;y)-\ln h(x';y)| &\leq \left(y \sup_{(x,x') \in [\underline{\alpha},\infty)^2} \frac{|\ln(x)-\ln(x')|}{|x-x'|}+1\right) |x-x'|\\
& \leq \left(y/\underline{\alpha}+1\right) |x-x'|
\end{align*}
Thus,
\begin{align*}
&\sup_{\theta \in \Theta}|\ln h(\f{\chunk{Y}{1}{k-1}}(x);Y_k)- \ln h(\f{\chunk{Y}{-\infty}{k-1}};Y_k)|\\
&\quad \leq \left(Y_k/\underline{\alpha}+1\right)\sup_{\theta \in \Theta}|\f{\chunk{Y}{1}{k-1}}(x)-\f{\chunk{Y}{-\infty}{k-1}}| \\
&\quad =\left(Y_k/\underline{\alpha}+1\right) \sup_{\theta \in \Theta}\left| x\prod_ {\ell=1}^{k-1} a^\theta(Y_\ell)+\f{\chunk{Y}{-\infty}{0}}\prod_{i=0}^{k-1}a^\theta(Y_{i}) \right| \\
&\quad \leq\left(Y_k/\underline{\alpha}+1\right) \left| x+  \bar{\alpha}\f{\chunk{Y}{-\infty}{0}} \right| \bar{\alpha}^{k-1}\eqsp,
\end{align*}
which converges to $0$ as $k$ goes to infinity by applying \autoref{lem:useful} under \refhyp[C]{assum:moment:Y}.
\end{proof}
\subsection{The Poisson threshold in well-specified models}
Let $K$ be a compact set of $\rset^5$ and let $\Theta$ be the following (compact) set of parameters
\begin{multline}
\label{eq:def-parameter-set}
\Theta= \left\{\theta= (\omega,a,b,c,d) \in K\, :  \right. \\
\left. \min(\omega,a,b,a+c,b+d)\geq \underline{\alpha},\,  (a+b+c+d)\vee a\leq \bar{\alpha}<1\right\} \eqsp.
\end{multline}
where $(\underline{\alpha},\bar{\alpha}) \in (0,\infty) \times (0,1)$.
We assume that $(Y_k)$ is the observation process of Poisson threshold model as described in  \autoref{example:Poisson:threshold:defi} with $(\omega,a,b,c,d)=(\omega_\star,a_\star,b_\star,c_\star,d_\star)=\thv$.
\begin{proposition}
Assume that $\thv \in \Theta$ and that $(L,U) \cap \nset \neq \emptyset$. Then, for all $x \in\Xset$,
$\lim_{n \to \infty} \mlY{n,x}=\theta_\star$, $\PP^\thv\as$.
\end{proposition}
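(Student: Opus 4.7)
The plan is to invoke \autoref{thm:PoissThres:misspecified-Consist-MLE} with $\PP_\star=\PP^{\thv}$, so that $\mathsf{d}(\mlY{n,x},\Theta_\star)\to 0$, $\PP^{\thv}$-a.s., and then collapse $\Theta_\star$ to $\{\thv\}$ via \autoref{prop:ident}. Since $\thv\in\Theta$ satisfies $(a_\star+b_\star+c_\star+d_\star)\vee a_\star<1$, \autoref{prop:threshold:unique} produces a unique invariant distribution $\pi$ for $Q^{\thv}$ with $\pi V<\infty$ for $V(x)=x$, and \autoref{prop:ergo-Y:general} yields a strict-sense stationary ergodic version of $\sequence{Y}[k][\zset]$, covering \refhyp[C]{assum:stat:Y}. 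Using $Y_0\mid X_0\sim\mcp(X_0)$ we get $\PE^{\thv}[\ln(1+Y_0)]\leq\PE^{\thv}[Y_0]=\pi V<\infty$, covering \refhyp[C]{assum:moment:Y}.

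To place the stationary process in the framework of \autoref{prop:ident}, I first identify $X_0$ with $\f[\thv]{\chunk{Y}{-\infty}{0}}$ almost surely: iterating $X_n=f^{\thv}_{Y_n}(X_{n-1})$ backward $m$ steps and comparing with $\f[\thv]{\chunk{Y}{-m+1}{0}}(x)$ via \eqref{eq:diff:x:y}, the gap is bounded by $|X_{-m}-x|\,\bar{\alpha}^m$; stationarity together with Markov's inequality and Borel--Cantelli force $X_{-m}\bar{\alpha}^m\to 0$ a.s., and combined with the pathwise limit supplied by \autoref{lem:lipshitz} this yields the identification. Condition \eqref{item:ident:H} of \autoref{prop:ident} is then immediate since $H(x;\{0\})=\rme^{-x}$ determines $x$.

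The crux is condition \eqref{item:ident:f}. Assuming $\f[\thv]{\chunk{Y}{-\infty}{0}}=\f[\theta]{\chunk{Y}{-\infty}{0}}$, $\PP^{\thv}$-a.s., a one-step shift yields $f^{\thv}_{Y_1}(X_0)=f^\theta_{Y_1}(X_0)$, $\PP^{\thv}$-a.s. Since conditionally on $X_0\geq\underline{\alpha}>0$, $Y_1\sim\mcp(X_0)$ charges every non-negative integer, this upgrades to $f^{\thv}_n(x)=f^\theta_n(x)$ for every $n\in\nset$ and $\pi$-almost every $x$. The law $\pi$ is non-degenerate: were it a Dirac at $x_*$, the relation $f^{\thv}_n(x_*)\equiv x_*$ applied at $n=0$ and at $n=\lceil U\rceil$ would force $(b_\star+d_\star)\lceil U\rceil=0$, contradicting $b_\star+d_\star\geq\underline{\alpha}>0$. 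Picking two distinct points $x_1,x_2$ in the support of $\pi$ and specializing $n$: taking $n=0$ (outside $(L,U)$ since $L>0$) identifies $\omega$ and $a+c$; taking $y_0\in(L,U)\cap\nset$ (nonempty by hypothesis) identifies $a$, hence $c$, then $b$; and taking $n=\lceil U\rceil$ identifies $d$. Thus $\theta=\thv$, \autoref{prop:ident} yields $\Theta_\star=\{\thv\}$, and the claim follows.

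The main obstacle is this identification step: disentangling $a$ from $a+c$ and $b$ from $b+d$ genuinely requires the hypothesis $(L,U)\cap\nset\neq\emptyset$, without which the indicator $\1\{y\notin(L,U)\}$ is constantly $1$ on the Poisson support and only the compound coefficients $a+c$ and $b+d$ would remain observable.
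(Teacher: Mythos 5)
Your proposal is correct and follows the paper's overall skeleton --- consistency via \autoref{thm:PoissThres:misspecified-Consist-MLE}, then $\Theta_\star=\{\thv\}$ via \autoref{prop:ident}, with \autoref{prop:threshold:unique} supplying stationarity, the moment bound and (through the same backward-iteration device as the paper, which invokes \autoref{lem:useful} where you use Markov plus Borel--Cantelli) the identification $X_0=\f[\thv]{\chunk{Y}{-\infty}{0}}$ --- but it handles the crux, condition \autoref{prop:ident}-\eqref{item:ident:f}, by a genuinely different route. The paper iterates the equality \emph{two} steps, writes $X'_t=f^\theta_{Y_t}\circ f^\theta_{Y_{t-1}}(X_{t-2})$, conditions on $(Y_{t-1},Y_t)=(k,\ell)$ given $\chunk{Y}{-\infty}{t-2}$, and separates the parameters by comparing growth rates as $k\to\infty$ and then $\ell\to\infty$; the two-step composition is precisely what lets it avoid any discussion of the stationary law $\pi$, since the needed variation comes from the conditional joint law of $(Y_{t-1},Y_t)$. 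You stop at the one-step identity $f^{\thv}_n(x)=f^\theta_n(x)$ for all $n\in\nset$ and $\pi$-a.e.\ $x$, which by itself would be insufficient (a Dirac $\pi$ constrains each affine map at a single point only), and you supply the missing ingredient with a dedicated non-degeneracy argument: $\pi=\delta_{x_*}$ would force $f^{\thv}_n(x_*)=x_*$ for every $n$, and comparing $n=0$ with $n=\lceil U\rceil$ contradicts $b_\star+d_\star\geq\underline{\alpha}>0$. With two distinct support points, your exact evaluations at $n=0$, $n=y_0\in(L,U)\cap\nset$ and $n=\lceil U\rceil$ recover $\omega,a,c,b,d$ in turn, and $(L,U)\cap\nset\neq\emptyset$ plays the same role in both proofs. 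Your route is arguably more elementary and transparent --- exact affine comparisons instead of asymptotic equivalents (where the paper's displayed claim $b^\theta(k)\sim bk$ should really read $(b+d)k$ for large $k$, so its chain of identifications needs a small repair) --- at the modest price of the extra lemma that $\pi$ is not a point mass, which you prove correctly.
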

\begin{proof} Let $\{(X_k,Y_k)\, , \, k \in \zset\}$ satisfying the recursions given by \autoref{example:Poisson:threshold:defi} with $(\omega,a,b,c,d)=(\omega_\star,a_\star,b_\star,c_\star,d_\star)=\thv$.
 \autoref{prop:threshold:unique} shows that
\refhyps[C]{assum:stat:Y}{assum:moment:Y} hold so that \autoref{thm:PoissThres:misspecified-Consist-MLE} applies. It thus remains to show that $\Theta_\star=\{\thv\}$. This follows from \autoref{prop:ident} provided we show that
\begin{enumerate}[(a)]
\item \label{eq:X} $X_0=\f[\thv]{\chunk{Y}{-\infty}{0}}$
\item \label{eq:H} $H(x;\cdot)=H(x';\cdot)$ implies that $x=x'$,
\item \label{eq:f}$\f[\thv]{\chunk{Y}{-\infty}{0}}=\f[\theta]{\chunk{Y}{-\infty}{0}},\eqsp\PP^\thv\as$,  implies that $\theta=\thv$,
\end{enumerate}
Define
$$
a_\star(y)=a_\star+c_\star \1\{y \notin (L,U)\}\, , \quad b_\star(y)=b_\star y+d_\star y\1\{y \notin (L,U)\}\eqsp,
$$
First note that for all $m\geq 0$,
$$
X_0=\f[\thv]{\chunk{Y}{-m}{0}}(X_{-m})= X_{-m}\prod_ {\ell=-m}^0 a_\star(Y_\ell)+\sum_{j=0}^{m} [\omega_\star + b_\star(Y_{-j})] \prod_{\ell=1}^{j}a_\star(Y_{-\ell})  \eqsp.
$$
By applying \autoref{lem:useful} and  \autoref{prop:threshold:unique} , we have
$$
X_{-m}\prod_ {\ell=-m}^0 a_\star(Y_\ell) \leq \bar{\alpha}^{m+1}X_{-m} \to_{m \to \infty}0\,, \quad \PP^\thv\as
$$
so that
\begin{equation}
\label{eq:X0:Y}
X_0= \lim_{m \to \infty} \f[\thv]{\chunk{Y}{-m}{0}}(X_{-m}) = \f[\thv]{\chunk{Y}{-\infty}{0}}\, , \quad \PP^\thv\as
\end{equation}
where $\f[\thv]{\chunk{Y}{-\infty}{0}}$ is defined in \eqref{eq:def-stat-PoissThres}. Thus, \eqref{eq:X} holds. \eqref{eq:H} also clearly holds since $H(x,\cdot)$ is a Poisson distribution of parameter $x$. It remains to check \eqref{eq:f}. If $\PP^\thv\as$,  $\f[\thv]{\chunk{Y}{-\infty}{0}}=\f{\chunk{Y}{-\infty}{0}}$, then, by stationarity of the \sequence{Y}, we have: for all $t\in \zset$, $X_t=X'_t\, ,\quad \PP^\thv\as$
where we set $X'_t\eqdef\f{\chunk{Y}{-\infty}{t}}$. This implies that $X'_t=f^\theta_{Y_t} \circ f^\theta_{Y_{t-1}}(X'_{t-2})= f^\theta_{Y_t} \circ f^\theta_{Y_{t-1}}(X_{t-2})$, $\PP^\thv\as$, so that,
\begin{multline*}
\omega + a^\theta(Y_t) \left[ \omega +a^\theta(Y_{t-1})X_{t-2}+b^\theta(Y_{t-1})\right]+b^\theta(Y_{t})\\
=\omega_\star + a_\star(Y_t) \left[ \omega_\star+a_\star(Y_{t-1})X_{t-2}+b_\star(Y_{t-1})\right]+b_\star(Y_{t}) \eqsp, \quad \PP^\thv\as
\end{multline*}
Since $\CPPu[\thv]{(Y_{t-1},Y_{t})=(k,\ell)}{\chunk{Y}{-\infty}{t-2}} \neq 0$, for all $(k,\ell) \in \nset^2$ and $X_{t-2}$ is $\sigma^\theta(Y_{\ell}\, , \ell \leq t-2)$-measurable, we obtain that, for all $(k,\ell) \in \nset^2$, $\PP^\thv\as$,
\begin{multline*}
\omega + a^\theta(k) \left[ \omega +a^\theta(\ell)X_{t-2}+b^\theta(\ell)\right]+b^\theta(k)\\
=\omega_\star + a_\star(k) \left[ \omega_\star +a_\star(\ell)X_{t-2}+b_\star(\ell)\right]+b_\star(k)
\end{multline*}
Fix $\ell \in \nset$. Then, recalling that $a^\theta(k)$ is bounded in $k$ and that $b^\theta(k) \sim_{k \to \infty} b k$, we obtain that $b=b_\star$. Fix now $k \in \nset$ and take the equivalent of the previous equation as $\ell$ goes to infinity, we then obtain $a^\theta(k)b \ell=a_\star(k)b_\star \ell$ for all $k \in \nset$ which can also be written as
$$
a+c\1_{k \notin (L,U)}=a_\star+c_\star\1_{k \notin (L,U)}
$$
so that $a=a_\star$ and $c=c_\star$ by using that $(L,U) \cap \nset\neq \emptyset$. Finally, using $b=b_\star$, $a=a_\star$ and $c=c_\star$, we have $\PP^\thv\as$ for all $k \in \nset$,
$$
\omega + a_\star(k) X_{t-1}+b_\star k + d k\1_{k \notin (L,U)}=\omega_\star + a_\star(k) X_{t-1}+b_\star k + d_\star k \1_{k \notin (L,U)}
$$
so that
$$
\omega +dk \1_{k \notin (L,U)}=\omega_\star +d_\star k\1_{k \notin (L,U)}\eqsp.
 $$
 which again implies that $\omega=\omega_\star$ and $d=d_\star$ since $(L,U) \cap \nset \neq \emptyset$.
\end{proof}

\subsection{The Log-linear Poisson autoregression in misspecified Models}
Let $\Theta$ be the following (compact) set of parameters
\begin{equation}
\label{eq:def-parameter-set}
\Theta= \set{\theta= (d,a,b) \in \rset^3}{|d|\leq \tilde{d},\quad |a| \leq \tilde{a}<1, \quad |b|\leq \tilde{b}} \eqsp.
\end{equation}
where $\tilde{d},\ \tilde{a},\ \tilde{b}$ are positive constants of $\rset$.
 Assume that the observations $(Y_n)_{n \in \zset}$ are integer-valued and satisfy the assumptions \refhyps[C]{assum:stat:Y}{assum:moment:Y}.
The Log-linear Poisson autoregression model described in \eqref{defi:logPoisson} may be rewritten as in \eqref{eq:def:XY:theta}, by setting $\Xset=\rset$, $\Yset=\nset$, $\theta=(d,a,b)$, and
\begin{align}
&f^\theta_y(x)=d+ax+b \ln(1+y) \eqsp,\label{eq:logPoiss:def:f}\\
&h(x;y)=\frac{\rmd H(x;\cdot)}{\rmd \mu}(y)=\exp(-\rme^x) \rme^{xy}/{y!} \eqsp,\label{eq:logPoiss:def:h}
\end{align}
where $\mu$ is the counting measure on $\nset$. Using \eqref{eq:notationItere:f}, we have for all $s\leq t$,
\begin{equation}
\label{eq:logPoiss:itere:f}
\f{\chunk{y}{s}{t}}(x)=d \frac{1-a^{t-s+1}}{1-a}+a^{t-s+1}x+b\sum_{j=0}^{t-s} a^j \ln(1+y_{t-j})\eqsp.
\end{equation}
With these definitions, let $\theta_{n,x}$ be the Maximum Likelihood estimator associated to the likelihood function $\lkdM[n,x]{\chunk{Y}{1}{n}}[\theta]$ as defined in  \eqref{eq:defi:mle} and \eqref{eq:defi:lkdM}.

\begin{theorem} \label{thm:misspecified-Consist-MLE}
Assume \refhyps[C]{assum:stat:Y}{assum:moment:Y}. Then, for all $x \in \Xset$,
$\lim_{n \to \infty} \mathsf{d}(\mlY{n,x}, \Theta_\star) = 0$, $\PP_\thv\as$,
where
\begin{align}
&\Theta_\star\eqdef\argmax_{\theta \in \Theta} \PE_\star\left( Y_0 \f{\chunk{Y}{-\infty}{-1}}-\rme^{\f{\chunk{Y}{-\infty}{-1}}}-\ln Y_0!\right)\eqsp, \label{eq:def-Theta-star-Y}\\
&\f{\chunk{Y}{-\infty}{n}} \eqdef \frac{d}{1-a} + b \sum_{j=0}^\infty a^j \ln( 1 + Y_{n-j} )\eqsp,\quad \forall (n,\theta) \in \zset \times \Theta\eqsp. \label{eq:def-stat-log-intensity}
\end{align} 
\end{theorem}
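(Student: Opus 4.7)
The plan is to apply \autoref{thm:consist:misspecified}, following the same template as the proof of \autoref{thm:PoissThres:misspecified-Consist-MLE}. Stationarity and ergodicity \refhyp[B]{assum:stat:ergo:Y} is exactly \refhyp[C]{assum:stat:Y}, continuity \refhyp[B]{assum:continuity:Y} is immediate from \eqref{eq:logPoiss:def:f}--\eqref{eq:logPoiss:def:h}, and the candidate limit $\f{\chunk{Y}{-\infty}{n}}$ is the explicit series \eqref{eq:def-stat-log-intensity}, which converges absolutely $\PP_\star\as$ since $\tilde a<1$ and $\PE_\star[\ln(1+Y_0)]<\infty$. Only \refhyp[B]{assum:limit:Y} then remains to be verified.

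First I would dispatch the two easy parts. Part \eqref{item:lim:moins:infty} follows from \autoref{lem:lipshitz} applied with the constant Lipshitz factor $\varrho(y)\equiv \tilde a$: indeed $|f^\theta_y(x)-f^\theta_y(x')|\leq \tilde a|x-x'|$ uniformly in $\theta$, $\PE_\star[\ln\tilde a]<0$ and $\PE_\star[\ln^+\tilde a]=0$ trivially, and $\sup_{\theta\in\Theta}d(x,f^\theta_{Y_0}(x))\leq \tilde d+(1+\tilde a)|x|+\tilde b\ln(1+Y_0)$ has integrable $\ln^+$ under \refhyp[C]{assum:moment:Y}. Part \eqref{item:momentCond} is handled by \autoref{rem:integerValued} because $h(v;\cdot)$ is a Poisson probability mass function, hence bounded by $1$.

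The hard part will be \refhyp[B]{assum:limit:Y}\eqref{item:lim:k}, because $\ln h(v;y)=vy-\rme^v-\ln y!$ involves the exponential $\rme^v$ and is only locally Lipshitz in $v$. My starting point is the bound
$$
|\ln h(v;y)-\ln h(v';y)|\leq \bigl(\rme^{v\vee v'}+y\bigr)|v-v'|,
$$
combined with the telescoping identity derived directly from \eqref{eq:logPoiss:itere:f},
$$
\f{\chunk{Y}{1}{k-1}}(x)-\f{\chunk{Y}{-\infty}{k-1}}=a^{k-1}\bigl(x-\f{\chunk{Y}{-\infty}{0}}\bigr),
$$
which supplies a uniform-in-$\theta$ geometric factor $\tilde a^{k-1}$ for the $v$-difference. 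To absorb the exponential term, I would introduce the stationary process $M_n=\tilde d/(1-\tilde a)+\tilde b\sum_{j=0}^\infty \tilde a^j\ln(1+Y_{n-j})$, which is integrable under \refhyp[C]{assum:moment:Y} and uniformly majorizes both $|\f{\chunk{Y}{-\infty}{k-1}}|$ and $|\f{\chunk{Y}{1}{k-1}}(x)|-|x|$ in $\theta$. It then suffices to prove that $\rme^{M_{k-1}}\tilde a^{k-1}\to 0$ and $Y_k\tilde a^{k-1}\to 0$ almost surely; both follow from \autoref{lem:useful} applied respectively to the stationary sequences $(\rme^{M_k})$ (log-moment finite since $\PE_\star M_0<\infty$) and $(Y_k)$ (log-moment finite by \refhyp[C]{assum:moment:Y}), together with the Cauchy root test. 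Plugging these three ingredients into the bound above establishes \refhyp[B]{assum:limit:Y}\eqref{item:lim:k}, and \autoref{thm:consist:misspecified} delivers the consistency statement.
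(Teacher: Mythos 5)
Your proposal is correct and follows essentially the same route as the paper's proof: reduction to \autoref{thm:consist:misspecified}, \autoref{rem:integerValued} for \refhyp[B]{assum:limit:Y}-\eqref{item:momentCond}, the exact identity $\f{\chunk{Y}{1}{k-1}}(x)-\f{\chunk{Y}{-\infty}{k-1}}=a^{k-1}(x-\f{\chunk{Y}{-\infty}{0}})$ combined with a mean-value bound on $v\mapsto\ln h(v;y)$, and \autoref{lem:useful} applied to the same stationary majorant as in \eqref{eq:pratique}. The only (harmless) deviation is that you verify \refhyp[B]{assum:limit:Y}-\eqref{item:lim:moins:infty} via \autoref{lem:lipshitz} with $\varrho\equiv\tilde a$ rather than by the paper's direct computation; this works equally well since the limit produced there is identified with the series \eqref{eq:def-stat-log-intensity} through \eqref{eq:logPoiss:itere:f}.
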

\begin{proof}
 According to \autoref{thm:consist:misspecified}, it is sufficient to check \refhyps[B]{assum:continuity:Y}{assum:limit:Y}. \refhyp[B]{assum:continuity:Y} clearly holds.
 Using \autoref{rem:integerValued}, since $\Yset=\nset$, we only need to check \refhyp[B]{assum:limit:Y}-\eqref{item:lim:moins:infty} and  \refhyp[B]{assum:limit:Y}-\eqref{item:lim:k}.
First note that
\begin{equation}\label{eq:pratique}
\sup_{\theta \in \Theta} |\f{\chunk{Y}{-\infty}{0}}|\leq \tilde{d}/(1-\tilde{a})+\tilde{b} \sum_{j=0}^\infty {\tilde{a}}^j \ln( 1 + Y_{-j}  )<\infty,\quad \PP_\star\as
\end{equation}
which is finite according to \refhyp[C]{assum:moment:Y} by using \autoref{lem:useful}. Now, write for all $\theta=(d,a,b) \in \Theta$,
\begin{align*}
&|\f{\chunk{Y}{-m}{0}}(x)-\f{\chunk{Y}{-\infty}{0}}|\\
&\quad =|a|^{m+1}\left|-\frac{d}{1-a}+x+b \sum_{\ell=0}^\infty a^\ell \ln(1+Y_{-m-1-\ell}) \right|\\
&\quad \leq |\tilde a|^{m+1}\left(\frac{\tilde d}{1-\tilde a}+|x|+\tilde b \sum_{\ell=0}^\infty \tilde{a}^\ell \ln(1+Y_{-m-1-\ell}) \right)\eqsp.
\end{align*}
By \refhyp[C]{assum:moment:Y} and by applying \autoref{lem:useful}, the \rhs\ (which does not depend on $\theta$) converges to 0 as $m$ goes to infinity. Thus, \eqref{item:lim:moins:infty} holds. We now turn to \eqref{item:lim:k}.
\begin{multline}
\sup_{\theta \in \Theta} |\ln h(\f{\chunk{Y}{1}{k-1}}(x);Y_k)- \lkdStat{\chunk{Y}{-\infty}{k}}[\theta]|\leq \\
 Y_k \sup_{\theta \in \Theta}\left| \f{\chunk{Y}{1}{k-1}}(x) - \f{\chunk{Y}{-\infty}{k-1}} \right| +\sup_{\theta \in \Theta}\left| \rme^{\f{\chunk{Y}{1}{k-1}}(x)} - \rme^{\f{\chunk{Y}{-\infty}{k-1}}} \right|\eqsp.
 \label{eq:bound-unif-log-lkh-Y}
\end{multline}
Consider the first term in the rhs. It follows immediately from \eqref{eq:logPoiss:itere:f} and \eqref{eq:def-stat-log-intensity} that
\begin{equation}
\label{eq:difference-log-intensity}
 \f{\chunk{Y}{1}{k-1}}(x) - \f{\chunk{Y}{-\infty}{k-1}}= a^{k-1} \left( x - \f{\chunk{Y}{-\infty}{0}} \right) \eqsp.
\end{equation}
This implies that, for all $k \geq 1$,
\begin{equation*}
Y_k \sup_{\theta \in \Theta}\left| \f{\chunk{Y}{1}{k-1}}(x) - \f{\chunk{Y}{-\infty}{k-1}} \right| \leq Y_k \ \tilde{a}^{k-1}(x +\sup_{\theta \in \Theta} |\f{\chunk{Y}{-\infty}{0}}|) \eqsp,
\end{equation*}
which converges $\PP\as$ to 0 as $k$ goes to infinity according to \eqref{eq:pratique} and by applying \autoref{lem:useful} under  \refhyp[C]{assum:moment:Y}. Moreover, \eqref{eq:difference-log-intensity} also implies that
\begin{multline*}
\left| \rme^{\f{\chunk{Y}{0}{k-1}}(x)} - \rme^{\f{\chunk{Y}{-\infty}{k-1}}} \right|
=  \rme^{\f{\chunk{Y}{-\infty}{k-1}}} \left| \rme^{a^{k-1} \left( x - \f{\chunk{Y}{-\infty}{0}} \right)} - 1 \right| \\
\leq |a|^{k-1}   \left| x - \f{\chunk{Y}{-\infty}{0}} \right| \rme^{\left| x - \f{\chunk{Y}{-\infty}{0}} \right|+\f{\chunk{Y}{-\infty}{k-1}}} \eqsp,
\end{multline*}
so that the second term of the rhs of \eqref{eq:bound-unif-log-lkh-Y} is bounded according to
\begin{multline*}
\sup_{\theta \in \Theta}\left| \rme^{\f{\chunk{Y}{0}{k-1}}(x)} - \rme^{\f{\chunk{Y}{-\infty}{k-1}}} \right| \\
 \leq \sup_{\theta \in \Theta}\left( \left| x - \f{\chunk{Y}{-\infty}{0}} \right| \rme^{\left| x - \f{\chunk{Y}{-\infty}{0}} \right|} \right)  \times  \tilde{a}^{k-1} \sup_{\theta \in \Theta} \rme^{|\f{\chunk{Y}{-\infty}{k-1}}|}\eqsp.
\end{multline*}
To complete the proof, it is thus sufficient to show that
$$
\lim_{k \to \infty} \tilde{a}^k \exp\left\{\sup_{\theta \in \Theta} |\f{\chunk{Y}{-\infty}{k-1}}|\right\} = 0,  \quad \PP_\thv\as
$$
But this is straightforward by applying \autoref{lem:useful} since by  \eqref{eq:pratique} and by setting $V_k\eqdef \exp\left\{\sup_{\theta \in \Theta} |\f{\chunk{Y}{-\infty}{k-1}}|\right\}$, we have
\begin{multline*}
\PE_\star\left[  \left(\ln V_1 \right)_+\right]
 \leq \frac{\tilde{d}}{1-\tilde{a}}+\tilde{b} \sum_{j=0}^\infty {\tilde{a}}^j \PE_\star\left[ \ln( 1 + Y_{-j}  ) \right]= \frac{\tilde{d}+\tilde{b} \PE_\star[\ln(1+Y_{0})]}{1-\tilde{a}}\eqsp,
\end{multline*}
which is finite by \refhyp[C]{assum:moment:Y}.
\end{proof}

\subsection{Log-linear Poisson autoregression in well-specified models}
Let $\Theta$ be the following (compact) set of parameters
\begin{equation}
\label{eq:def-parameter-set}
\Theta= \{ (d,a,b) \in \rset^3;\ |d|\leq \tilde{d},\quad |a+b|\vee |a|\vee |b|\leq \tilde{\alpha}<1\} \eqsp.
\end{equation}
where $(\tilde{d},\ \tilde{\alpha})\in \rset^+_* \times (0,1)$.
We assume that \sequence{Y} is the observation process of the Log-linear Poisson autoregression model described in \eqref{defi:logPoisson} with parameters $(d,a,b)=(d_\star,a_\star,b_\star)=\thv$.
\begin{proposition}
Assume that $\thv \in \Theta$. Then, for all $x \in\Xset$,
$$
\lim_{n \to \infty} \mlY{n,x}=\theta_\star\, , \quad \PP^\thv\as
$$
\end{proposition}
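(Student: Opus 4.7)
The plan mirrors the proof of the analogous well-specified result for the Poisson threshold model. By \autoref{prop:logPoiss:unicMesInv}, for $\theta=\thv$ the kernel $Q$ admits a unique invariant distribution $\pi$ satisfying $\pi V<\infty$ with $V(x)=\rme^{|x|}$; combined with \autoref{prop:ergo-Y:general}, this produces a strict-sense stationary and ergodic version of $\sequence{Y}[k][\zset]$ under $\PP^\thv$, so that \refhyp[C]{assum:stat:Y} holds. The integrability \refhyp[C]{assum:moment:Y} follows from $\PE^\thv[\ln(1+Y_0)]\leq \PE^\thv[Y_0]=\PE^\thv[\rme^{X_0}]\leq \pi V<\infty$. \autoref{thm:misspecified-Consist-MLE} then yields $\mathsf{d}(\mlY{n,x},\Theta_\star)\to 0$, $\PP^\thv\as$, so it remains to establish $\Theta_\star=\{\thv\}$ via \autoref{prop:ident}.

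The three conditions (a), (b), (c) of \autoref{prop:ident} must be verified. Condition (b) is immediate: $H(x;\cdot)$ is Poisson with mean $\rme^x$ and $x\mapsto \rme^x$ is injective. For (a), iterating \eqref{defi:logPoisson} backwards from time $-m-1$ and using \eqref{eq:logPoiss:itere:f} gives
\begin{equation*}
X_0 \;=\; a_\star^{m+1}X_{-m-1}+\frac{d_\star(1-a_\star^{m+1})}{1-a_\star}+b_\star\sum_{j=0}^{m} a_\star^{j}\ln(1+Y_{-j}) \eqsp.
\end{equation*}
Since $|a_\star|<1$ and $X_{-m-1}$ is stationary with $\pi V<\infty$, \autoref{lem:useful} forces $a_\star^{m+1}X_{-m-1}\to 0$, $\PP^\thv\as$, so letting $m\to\infty$ identifies $X_0$ with $\f[\thv]{\chunk{Y}{-\infty}{0}}$ as given in \eqref{eq:def-stat-log-intensity}.

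The main obstacle is condition (c). Setting $X'_n\eqdef\f{\chunk{Y}{-\infty}{n}}$, the hypothesis of (c), combined with stationarity of $\sequence{Y}$, propagates to $X_n=X'_n$ for every $n\in\zset$, $\PP^\thv\as$. Substituting the one-step recursions $X_n=d_\star+a_\star X_{n-1}+b_\star\ln(1+Y_n)$ and $X'_n=d+a X'_{n-1}+b\ln(1+Y_n)$ and subtracting yields
\begin{equation*}
(d-d_\star)+(a-a_\star)X_{n-1}+(b-b_\star)\ln(1+Y_n)\;=\;0 \eqsp, \quad \PP^\thv\text{-a.s.}
\end{equation*}
Conditioning on $\sigma(\chunk{Y}{-\infty}{n-1})$, the variable $X_{n-1}$ is measurable (by the shifted form of (a)) and $Y_n$ is then conditionally Poisson of positive parameter $\rme^{X_{n-1}}$, so $\ln(1+Y_n)$ has a non-degenerate conditional law; this forces $b=b_\star$. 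The surviving identity $(d-d_\star)+(a-a_\star)X_{n-1}=0$ then yields $a=a_\star$ provided $X_{n-1}$ is not $\PP^\thv$-a.s.\ constant, a property inherited from the non-degeneracy of $\ln(1+Y_{n-1})$ appearing in the recursion with coefficient $b_\star\neq 0$; finally $d=d_\star$. The technical crux is therefore this identifiability step, which is purely an exercise in conditional measurability once the shift-propagation $X_n=X'_n$ has been established.
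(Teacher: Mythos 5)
Your proposal is correct and follows the same global architecture as the paper's proof: invoke \autoref{prop:logPoiss:unicMesInv} (plus \autoref{prop:ergo-Y:general}) to get \refhyps[C]{assum:stat:Y}{assum:moment:Y}, apply \autoref{thm:misspecified-Consist-MLE}, and reduce identifiability to the three conditions of \autoref{prop:ident}, with (a) obtained by backward iteration and \autoref{lem:useful} and (b) immediate. The only genuine divergence is in condition (c). The paper works directly with the closed-form series \eqref{eq:def-stat-log-intensity}: it equates the two stationary log-intensities, then extracts coefficients by conditioning successively on $\sigma(Y_m;m\leq -1)$ and $\sigma(Y_m;m\leq -2)$, using that $Y_0$ (resp.\ $Y_{-1}$) is conditionally Poisson with positive intensity, to force $b=b_\star$, then $a=a_\star$, then $d=d_\star$. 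You instead propagate $X_n=X'_n$ to all $n$ by stationarity and difference the one-step recursions, which buys a cleaner linear identity $(d-d_\star)+(a-a_\star)X_{n-1}+(b-b_\star)\ln(1+Y_n)=0$ at the price of needing non-degeneracy of $X_{n-1}$ to conclude $a=a_\star$; this is in fact the route the paper takes for the Poisson threshold well-specified model, transplanted to the log-linear case. The two arguments are of equal strength: your non-degeneracy of $X_{n-1}$ requires $b_\star\neq 0$, but the paper's second conditioning step equally needs $b\neq 0$ (the coefficient of $\ln(1+Y_{-1})$ there is $b(a_\star^{\,}-a)$), so this caveat — which you flag explicitly and the paper leaves implicit, and which is unavoidable since the model is genuinely unidentifiable when $b_\star=0$ — is shared by both proofs rather than a gap specific to yours.
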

\begin{proof}
Let $\{(X_k,Y_k)\, , \, k \in \zset\}$ satisfying \eqref{defi:logPoisson} with $(d,a,b)=(d_\star,a_\star,b_\star)=\thv$.
 \autoref{prop:logPoiss:unicMesInv} shows that
\refhyps[C]{assum:stat:Y}{assum:moment:Y} hold so that \autoref{thm:misspecified-Consist-MLE} applies. It thus remains to show that $\Theta_\star=\{\thv\}$. This follows from \autoref{prop:ident} provided we show that
\begin{enumerate}[(a)]
\item \label{eq:X} $X_0=\f[\thv]{\chunk{Y}{-\infty}{0}}$, $\PP^\thv\as$,
\item \label{eq:H} $H(x;\cdot)=H(x';\cdot)$ implies that $x=x'$,
\item \label{eq:f}$\f[\thv]{\chunk{Y}{-\infty}{0}}=\f[\theta]{\chunk{Y}{-\infty}{0}},\eqsp\PP^\thv\as$,  implies that $\theta=\thv$,
\end{enumerate}
First note that for all $m\geq 0$,
$$
X_0=\f[\thv]{\chunk{Y}{-m}{0}}(X_{-m})=d_\star \frac{1-a_\star^{m+1}}{1-a_\star}+a_\star^{m+1}X_{-m}+b_\star\sum_{j=0}^{m}a_\star^{j}\ln(1+Y_{-j-1}) \eqsp.
$$
By applying \autoref{lem:useful} and \autoref{prop:logPoiss:unicMesInv}, we have $\lim_{m \to \infty}  a_\star^{m+1}X_{-m}=0\, , \, \PP^\thv\as$, so that
\begin{equation}
\label{eq:X0:Y}
X_0= \lim_{m \to \infty} \f[\thv]{\chunk{Y}{-m}{0}}(X_{-m}) = \f[\thv]{\chunk{Y}{-\infty}{0}}\, , \quad \PP^\thv\as
\end{equation}
where $\f[\thv]{\chunk{Y}{-\infty}{0}}$ is defined in \eqref{eq:def-stat-log-intensity}. Thus, \eqref{eq:X} holds. \eqref{eq:H} also clearly holds since $H(x,\cdot)$ is a Poisson distribution of parameter $\rme^x$. It remains to check \eqref{eq:f}. If $\PP^\thv\as$,  $\f[\thv]{\chunk{Y}{-\infty}{0}}=\f{\chunk{Y}{-\infty}{0}}$, then, by definition of $\f{\chunk{Y}{-\infty}{0}}$,
$$
\frac{d_\star}{1-a_\star} -\frac{d}{1-a}+  \sum_{j=0}^\infty (b_\star a_\star^j -b  a^j)\ln( 1 + Y_{-j} )=0\, , \quad \PP^\thv\as
$$
Conditionally on $\sigma(Y_m; m \leq -1)$, $Y_{0}$ is a Poisson random variable with a positive intensity; thus, the lhs is constant only if $b_\star=b$. This implies that
$$
\frac{d_\star}{1-a_\star} -\frac{d}{1-a}+ b \sum_{j=1}^\infty ( a_\star^j - a^j)\ln( 1 + Y_{-j} )=0\, , \quad \PP^\thv\as
$$
By the same argument, the lhs is constant conditionally on $\sigma(Y_m; m \leq -2)$ only if $a_\star=a$. In that case, the previous equality writes: $d_\star-d=0$ which completes the proof.
\end{proof}

\section{Proofs of \autoref{thm:uniqueInvariantProbaMeasure}, \autoref{lem:piV} and
\autoref{lem:practical:conditions}}
\label{sec:proof}

The proof roughly follows the lines of \cite{henderson:matteson:woodard:2011} with the difference that we relax the Lipshitz assumption and introduce a drift function $W$.
 In all this section, $(\Xset,d)$ is a Polish (complete, separable and metric) space and denote by $\Xsigma$ its associated Borel $\sigma$-field. A {\em totally separating system of metrics} $\{d_n\, ,\ n \in \nset\}$
for $\Xset$ is a set of metrics such that for all fixed $x,x' \in \Xset$, the sequence $\{d_n(x,x')\, , \  n \in \nset\}$
is nondecreasing in $n$ and $\lim_{n\to \infty} d_n (x,x') =\1_{x\neq x'}$. A metric $d$ on $\Xset$ induces a Wasserstein distance between probability measures $µ_1$ and $µ_2$ on $(\Xset,\Xsigma)$ defined by:
\begin{equation}\label{eq:defi:Wasserstein}
\wass[d]{µ_1 - µ_2} =\inf \left\{ \int \mu(\rmd x,\rmd x') d(x,x'):\, \mu \in \mcm(\mu_1,\mu_2)\right\}\eqsp,
\end{equation}
where $\mcm(\mu_1,\mu_2)$ is the set of probability measures $\mu$ on $(\Xset^2,\Xsigma^{\otimes 2})$ such that
$$
\mu(A\times \Xset)=\mu_1(A)\, , \quad \mu( \Xset \times A)=\mu_2(A)\eqsp, \quad
\text{for all $A \in \Xsigma$.}
$$
Since $\Xset$ is a separable metric space, the Kantorowich-Rubinstein duality theorem applies (see for example \cite[Theorem 11.8.2]{dudley:2002}) and we have
\begin{equation} \label{eq:dualityKantoRubin}
\wass[d]{µ_1 - µ_2}= \sup \left\{ µ_1(f)- µ_2(f)\, :\ \normLip[d]{f}\leq 1 \right\}\eqsp,
\end{equation}
where
$$
\normLip[d]{f}=\sup\left\{ \frac{|f(x) - f(x')|}{d(x,x')}\, : \ x,x'\in \Xset\, , \ x\neq x'\right\}\eqsp.
$$
Recall the definition of an asymptotically strong Feller kernel, first introduced by \cite{hairer:mattingly:2006}:
\begin{definition}
A Markov kernel $Q$ is asymptotically strong Feller if, for all  $x\in \Xset$, there exist a totally
separating system of metrics $\{d_n, n \in \nset\}$ for $\Xset$ and a sequence of integers $\{t_n, n \in \nset\}$ such that
$$
\lim_{\gamma \to 0} \limsup_{n \to \infty} \sup_{x' \in \ball{x}{\gamma}} \wass[d_n]{Q^{t_n}(x,\cdot)-Q^{t_n}(x',\cdot)}=0\eqsp.
$$
where $ \ball{x}{\gamma}$ is the open ball of radius $\gamma$ \wrt\ $d$ and centered at $x$.
\end{definition}
The following theorem is taken from \cite{hairer:mattingly:2006} and provide conditions for obtaining uniqueness of the invariant probability measure.
\begin{theorem} \label{thm:hairerMattingly}
Assume that the Markov kernel $Q$ is asymptotically strong Feller and admits a reachable
point $x\in \Xset$. Then, $Q$ has at most one stationary distribution.
\end{theorem}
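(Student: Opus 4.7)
I would argue by contradiction. Suppose $Q$ admits two distinct invariant probability measures. By the ergodic decomposition theorem we may take both to be ergodic, say $\mu_1 \neq \mu_2$; distinct ergodic invariant measures are mutually singular, so there exist disjoint Borel sets $A_1, A_2 \in \Xsigma$ with $\mu_i(A_i)=1$ for $i=1,2$.

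First I would verify that the reachable point $x$ necessarily lies in the support of every invariant probability measure $\mu$. For any open neighborhood $U$ of $x$, the nonnegative function $h(y) \eqdef \sum_{n\geq 0} 2^{-n-1} Q^n(y,U)$ is everywhere strictly positive by the definition of reachability; integrating against $\mu$ and using invariance yields $\int h\,\rmd\mu = \mu(U)$, which forces $\mu(U)>0$. In particular $x \in \mathrm{supp}(\mu_1)\cap \mathrm{supp}(\mu_2)$.

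Next I would invoke Birkhoff's ergodic theorem for the canonical Markov chain with kernel $Q$, applied to the indicators $\1_{A_i}$: for $\mu_i$-almost every $y$, the Cesaro averages $N^{-1}\sum_{k=0}^{N-1} Q^k(y,A_i)$ tend to $\mu_i(A_i)=1$. Combining this with $x \in \mathrm{supp}(\mu_i)$ shows that for every $\gamma>0$ and $\epsilon>0$ one may select points $y_i \in \ball{x}{\gamma}$ and a common integer $N$ such that both Cesaro averages exceed $1-\epsilon$. The asymptotic strong Feller property supplies a totally separating system of metrics $\{d_n\}$ and integers $\{t_n\}$ with $\limsup_n \sup_{x'\in \ball{x}{\gamma}} \wass[d_n]{Q^{t_n}(x,\cdot)-Q^{t_n}(x',\cdot)}\to 0$ as $\gamma\to 0$. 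Since $\wass[d_n]{\cdot}$ is convex in each of its arguments, this asymptotic closeness is inherited by the Cesaro-averaged measures $N^{-1}\sum_{k=0}^{N-1} Q^{k+t_n}(y_i,\cdot)$, which become arbitrarily close in $\wass[d_n]$ when $n$ is large and $\gamma$ is small. On the other hand, these Cesaro-averaged measures place mass at least $1-\epsilon$ on the disjoint sets $A_1,A_2$; since $d_n(u,v)\uparrow \1\{u\neq v\}$, Kantorovich-Rubinstein duality combined with monotone convergence gives $\wass[d_n]{\nu_1-\nu_2} \nearrow \tfrac12 \|\nu_1-\nu_2\|_{\mathrm{TV}}$, and the total variation distance between the two Cesaro averages approaches $2$ as $\epsilon\to 0$. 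Choosing $\gamma$ small, then $n$ large, then $N$ large produces an explicit contradiction.

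The main obstacle is the coordination of three limits: $\gamma\downarrow 0$ governing asymptotic strong Feller, $N\uparrow\infty$ for Birkhoff, and $n\uparrow\infty$ both for the asymptotic strong Feller sequence $\{t_n\}$ and for the saturation $d_n(u,v)\uparrow \1\{u\neq v\}$. A careful diagonal extraction along subsequences is needed so that the Wasserstein upper bound from asymptotic strong Feller genuinely dominates the total variation lower bound coming from ergodic disjointness, rather than merely producing an inequality that fails to tighten.
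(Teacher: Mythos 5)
The paper does not actually prove this statement: it is quoted from Hairer and Mattingly (2006), so the relevant benchmark is their argument. Your general strategy --- reduce to two distinct \emph{ergodic} invariant measures $\mu_1\neq\mu_2$, use their mutual singularity, show the reachable point $x$ lies in the support of every invariant measure, and then play the asymptotic strong Feller estimate at $x$ against the singularity --- is exactly the Hairer--Mattingly route, and your preliminary steps are correct (in particular the argument that $\mu(U)>0$ for every open neighbourhood $U$ of $x$, via the everywhere positive function $h(y)=\sum_{n\geq 0}2^{-n-1}Q^n(y,U)$ and invariance).

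The crucial step, however, is broken. You claim that, by convexity of $\wass[d_n]{\cdot}$, smallness of $\sup_{x'\in\ball{x}{\gamma}}\wass[d_n]{Q^{t_n}(x,\cdot)-Q^{t_n}(x',\cdot)}$ is ``inherited'' by the Cesaro-averaged measures $N^{-1}\sum_{k=0}^{N-1}Q^{k+t_n}(y_i,\cdot)$. Writing $Q^{k+t_n}=Q^kQ^{t_n}$, these averages are mixtures $\int Q^{t_n}(z,\cdot)\,\nu_i^N(\rmd z)$ with $\nu_i^N=N^{-1}\sum_{k}Q^{k}(y_i,\cdot)$; convexity only bounds their $d_n$-distance by $\int\wass[d_n]{Q^{t_n}(z,\cdot)-Q^{t_n}(z',\cdot)}\,\pi(\rmd z,\rmd z')$ for a coupling $\pi$ of $\nu_1^N$ and $\nu_2^N$, and the asymptotic strong Feller property says nothing about pairs $(z,z')$ outside $\ball{x}{\gamma}\times\ball{x}{\gamma}$, whereas $\nu_i^N$ spreads its mass over essentially the whole space. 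Writing instead $Q^{k+t_n}=Q^{t_n}Q^{k}$ does not help: closeness of $\delta_{y_1}Q^{t_n}$ and $\delta_{y_2}Q^{t_n}$ in $\wass[d_n]$ is not preserved by applying the further kernel $N^{-1}\sum_k Q^k$, since nothing makes $Q$ a contraction, or even continuous, for the metrics $d_n$. So the contradiction never materializes, and no ordering or diagonal extraction of the limits in $\gamma$, $n$, $N$ repairs it. The fix is to drop Birkhoff and Cesaro averages altogether and use invariance of the ergodic measures themselves, as in Hairer--Mattingly: with $B=\ball{x}{\gamma}$ and $\alpha_i=\mu_i(B)>0$, write $\wass[d_n]{\mu_1-\mu_2}=\wass[d_n]{\mu_1Q^{t_n}-\mu_2Q^{t_n}}$, couple a fixed fraction of the mass that both measures place on $B$ using the asymptotic strong Feller bound and couple the remaining mass arbitrarily (cost at most $1$ because $d_n\leq 1$), which gives $\limsup_{n\to\infty}\wass[d_n]{\mu_1-\mu_2}\leq 1-c$ for some $c>0$ depending only on $\gamma$; this contradicts the fact that, $d_n$ increasing to the discrete metric and $\mu_1$, $\mu_2$ being mutually singular, $\lim_{n\to\infty}\wass[d_n]{\mu_1-\mu_2}=1$.
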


\begin{proof}[Proof of \autoref{thm:uniqueInvariantProbaMeasure}]
Under \refhyp[A]{assum:weakFeller}, \cite[Theorem 2]{tweedie:1988} show that $Q$ admits at least one stationary distribution. Since by \refhyp[A]{assum:reachable} $Q$ admits a reachable point, we conclude by applying \autoref{thm:hairerMattingly} provided that we can prove $Q$ is asymptotically strong Feller. Denote \begin{equation}
\label{eq:definition-T}
T =\inf\set{i \in \nset}{U_i=0}
\end{equation}
with the convention $\inf \emptyset =\infty$. We preface the proof by the following technical lemma:
\begin{lemma} \label{lem:passageEversEetoile}
Let $\{(X_k,X'_k,U_k)\, , k \in\nset\}$ be a Markov chain on $(\Xset^2\times\{0,1\})$ with Markov kernel $\bar Q$, introduced in \refhyp{assum:asympStrgFeller:general}.  Then, for all real-valued nonnegative measurable function $\varphi$ on $\Xset^2$, $n \in \nset^*$, $x,x' \in \Xset$ and $u \in [0,1]$,
\begin{equation}\label{eq:passageEversEetoile}
 \Ecan[\delta_x \otimes \delta_{x'} \otimes \mathcal{B}(u)][\bar Q]\left[\varphi(X^\sharp_n) \1_{\{T>n\}}\right]=  \Ecan[\delta_x \otimes \delta_{x'} ][Q^\sharp]\left[\varphi(X^\sharp_n) \prod_{i=0}^{n-1} \alpha(X^\sharp_i)\right]\eqsp,
\end{equation}
where $Q^\sharp$ and $\alpha$ are introduced in \refhyp{assum:asympStrgFeller:general}, $X^\sharp_i\eqdef (X_i,X'_i)$ and $\mathcal{B}(u)$ is the Bernoulli distribution with parameter $u$.

\end{lemma}

\begin{proof}
The proof is by induction. Note first that \eqref{eq:passageEversEetoile} obviously holds for $n=1$. Now, assume that \eqref{eq:passageEversEetoile} holds for some $n\geq1$. Then, noting that $\1_{\{T>n+1\}}=\prod_{i=1}^{n+1} U_i$ (where $T$ is defined in \eqref{eq:definition-T})
\begin{align*}
 &\Pcan[\delta_x\otimes\delta_{x'}\otimes{\mathcal B}(u) ][\bar Q]\left[\varphi(X^\sharp_{n+1}) \1_{\{T>n+1\}}\right]=\Ecan[\delta_x\otimes\delta_{x'}\otimes{\mathcal B}(u) ][\bar Q]\left[\varphi(X^\sharp_{n+1}) \prod_{i=1}^{n+1} U_i\right]\\
 &\quad =\Ecan[\delta_x\otimes\delta_{x'}\otimes{\mathcal B}(u) ][\bar Q]\left[ \Ecan[][\bar Q](U_{n+1}\varphi(X^\sharp_{n+1}) \vert X^\sharp_n,U_n) \prod_{i=1}^{n} U_i\right]\\
 &\quad =\Ecan[\delta_x\otimes\delta_{x'}\otimes{\mathcal B}(u) ][\bar Q]\left[  \alpha(X^\sharp_n)\ Q^\sharp \varphi(X^\sharp_{n}) \prod_{i=1}^{n} U_i\right] \eqsp.
 \end{align*}
 where the last equality follows from \eqref{eq:barQ:alpha:Qstar}.
Applying the induction assumption to the \rhs\ of the inequality, we obtain
 \begin{align*}
& \Pcan[\delta_x\otimes\delta_{x'}\otimes{\mathcal B}(u) ][\bar Q]\left[\varphi(X^\sharp_{n+1}) \1_{\{T>n+1\}}\right]=\Ecan[\delta_x\otimes \delta_{x'}][Q^\sharp]\left[ \alpha(X^\sharp_n) \ Q^\sharp \varphi(X^\sharp_{n}) \prod_{i=0}^{n-1} \alpha(X^\sharp_i)\right]\\
&\quad =\Ecan[\delta_x\otimes \delta_{x'}][Q^\sharp]\left[ \alpha(X^\sharp_n) \, \Ecan[][Q^\sharp](\varphi(X^\sharp_{n+1})\vert X^\sharp_n) \prod_{i=0}^{n-1} \alpha(X^\sharp_i)\right] \\
& \quad =\Ecan[\delta_x\otimes \delta_{x'}][Q^\sharp]\left[ \varphi(X^\sharp_{n+1})\prod_{i=0}^{n} \alpha(X^\sharp_i)\right]\eqsp.
\end{align*}
The proof is completed.
\end{proof}
Now, consider $d_n(x,x')= 1 \wedge [n d(x,x')]$. Obviously, for all fixed $x,x' \in \Xset$, the sequence $\{d_n(x,x')\, , \ n \in \nset\}$ is  nondecreasing and $\lim_{n \to \infty} d_n(x,x')=\1\{x \neq x'\}$ so that $\{d_n\, , \ n \in \nset\}$ is a totally separating system of metrics. Moreover, the Kantorovich-Rubinstein duality theorem \eqref{eq:defi:Wasserstein}, \eqref{eq:dualityKantoRubin} and the marginal conditions \eqref{eq:marginalConditionsQ} yield:  for all $(x,x',u) \in \Xset\times  \ball{x}{\gamma_x}\times [0,1]$,
\begin{multline}
\label{eq:asf:technicos:one}
\wass[d_n]{\delta_x Q^n -\delta_{x'}Q^n} \leq \Ecan[\delta_x\otimes\delta_{x'}\otimes{\mathcal B}(u) ][\bar Q](d_n(X_n,X'_n)) \\
\leq \Pcan[\delta_x\otimes\delta_{x'}\otimes{\mathcal B}(u) ][\bar Q](T\leq n)+  \Ecan[\delta_x\otimes\delta_{x'}\otimes{\mathcal B}(u) ][\bar Q](d_n(X_n,X'_n) \1\{T>n\})\eqsp,
\end{multline}
where we have used that $d_n(x,y) \leq 1$. First consider the second term of the \rhs. Applying \autoref{lem:passageEversEetoile}, combined with $d_n(x,y) \leq n d(x,y)$, $\alpha(x,x')\leq 1$ for all $(x,x') \in \Xset^2$ and \eqref{eq:majo:d}, yields
\begin{align}
\label{eq:asf:technicos:two}
\Ecan[\delta_x\otimes\delta_{x'}\otimes{\mathcal B}(u) ][\bar Q](d_n(X_n,X'_n) \1\{T>n\})&\leq  n \Ecan[\delta_{x}\otimes \delta_{x'}][ Q^\sharp]\left[d(X_n,X'_n) \prod_{i=0}^{n-1} \alpha(X_i,X'_i)\right] \nonumber\\\leq  n D \rho^n d(x,x')\eqsp.
\end{align}
We now turn to the first term of the \rhs\ of \eqref{eq:asf:technicos:one}. By \eqref{eq:hyp:beta:gene}, we get
\begin{multline*}
\Pcan[\delta_x\otimes\delta_{x'}\otimes{\mathcal B}(u) ][\bar Q](T\leq n)=\sum_{k=0}^{n-1} \Pcan[\delta_x\otimes\delta_{x'}\otimes{\mathcal B}(u) ][\bar Q](T> k,\ U_{k+1}=0)\\
\leq \sum_{k=0}^{n-1} \Ecan[\delta_x\otimes\delta_{x'}\otimes{\mathcal B}(u) ][\bar Q][\1\{T> k\}d(X_{k},X'_{k})W(X_{k},X'_{k})]\\
\end{multline*}
Applying \autoref{lem:passageEversEetoile} to the \rhs\ , combined with $\alpha\leq 1$ and \eqref{eq:majo:d:w},
we obtain
\begin{multline*}
\Pcan[\delta_x\otimes\delta_{x'}\otimes{\mathcal B}(u) ][\bar Q](T\leq n)\leq
\sum_{k=0}^{n-1} \Ecan[\delta_x\otimes\delta_{x'}][Q^\sharp]\left[d(X_{k},X'_{k})W(X_{k},X'_{k})\prod_{i=0}^{k-1} \alpha(X_{i},X'_{i})\right] \\
\leq D d^{\zeta_1}(x,x') W^{\zeta_2}(x,x')\sum_{k=0}^{n-1} \rho^k \leq  \frac{D d^{\zeta_1}(x,x') W^{\zeta_2}(x,x')}{1-\rho} \eqsp.
\end{multline*}
Plugging this and \eqref{eq:asf:technicos:two} into \eqref{eq:asf:technicos:one} yields: for all $x \in \Xset$ and all $x' \in \ball{x}{\gamma}$ where $\gamma <\gamma_x$,
$$
\wass[d_n]{\delta_x Q^n -\delta_{x'}Q^n} \leq D \left( n \rho^n \gamma+  \gamma^{\zeta_1} \sup_{y \in \ball{x}{\gamma_x} }W^{\zeta_2}(x,y)/(1-\rho) \right) \eqsp,
$$
where $\gamma_x$ is defined in \eqref{eq:definition-gamma-x}. Thus, for all $x \in \Xset$,
$$
\lim_{\gamma \to 0} \limsup_{n \to \infty} \sup_{x' \in \ball{x}{\gamma}} \wass[d_n]{Q^{n}(x,\cdot)-Q^{n}(x',\cdot)}=0\eqsp.
$$
The proof is completed.
\end{proof}

\begin{proof}[Proof of \autoref{lem:piV}]
Since for all $M>0$, the function $x \mapsto x \wedge M$ is concave, we have for all $n \in \nset$,
$$
Q^n (V \wedge M) \leq (Q^n V) \wedge M \leq [\lambda^n V +b/(1-\lambda)] \wedge M \eqsp.
$$
By integrating with respect to $\pi$, we obtain that
$$
\pi (V\wedge M) = \pi Q^n (V \wedge M) \leq \pi \left\{[\lambda^n V +\beta/(1-\lambda)] \wedge M\right\}\eqsp.
$$
The Lebesgue convergence theorem yields by letting $n$ goes to infinity
$$
\pi (V\wedge M) \leq \beta/(1-\lambda) \wedge M\eqsp.
$$
The proof follows by letting $M$ goes to infinity.
\end{proof}

\appendix

\section{Ergodicity of one-sided and two-sided sequences}
Let $(\Xset,\Xsigma)$ be a measurable space.
Denote by $\shift:\Xset^\nset \rightarrow \Xset^\nset $ and $\tilde \shift:\Xset^\zset \rightarrow \Xset^\zset $ the shift operators defined by: for all  $\mathbf{x}=(x_t)_{t \in \nset} \in \Xset^\nset$ and all $\mathbf{\tilde x}=(\tilde x_t)_{t \in \zset} \in \Xset^\zset$,
\begin{align}\label{eq:def-shift}
\shift(\mathbf{x})&= (y_t)_{t \in\nset},\quad \mbox{where} \quad y_t=x_{t+1},\quad \forall t \in \nset \eqsp,\\
\tilde \shift(\mathbf{\tilde x})&= (\tilde y_t)_{t \in\zset},\quad \mbox{where} \quad \tilde y_t=\tilde x_{t+1},\quad \forall t \in \zset \eqsp.
\end{align}
Note that $\tilde \shift$ is invertible while $\shift$ is not.
Let $P$ be a Markov kernel on $(\Xset, \Xsigma)$. Denote by $\PP_\mu$ the probability induced on $(\Xset^\nset,\Xsigma^{\otimes \nset})$ by a Markov chain of  initial distribution $\mu$ and Markov kernel $P$ and write $\PE_\mu$ the associated expectation operator. If $\mu=\pi$ is an invariant distribution for $P$, we can define a probability $\tilde \PP_\pi$ induced on $(\Xset^\zset,\Xsigma^{\otimes \zset})$ by the Markov kernel $P$ and initial distribution $\pi$. Similarly, we write $\tilde \PE_\pi$ the associated expectation operator. Moreover, $\tilde \PP_\pi$ extends $\PP_\pi$ on $\zset$ in the sense that for all $A\in \Xsigma^{\otimes \nset}$, $\PP_\pi(A)=\tilde \PP_\pi(\Xset^{\zset_-^*} \times A)$, which can also be written as
\begin{equation}
\label{eq:relationTildePetP}
\PP_\pi=\tilde \PP_\pi \circ p^{-1} \eqsp,
\end{equation}
 where $p$ is the mapping from $(\Xset^\zset,\Xsigma^{\otimes \zset})$ to $(\Xset^\nset,\Xsigma^{\otimes \nset})$ defined by
\begin{equation}\label{eq:defiF}
p(\omega)=(\omega_n)_{n \in \nset} \quad \mbox{where}\quad \omega=(\omega_n)_{n \in \zset}\eqsp.
\end{equation}
Define for all $k \in \nset$, $X_k: \Xset^\nset \to \Xset$ by
$$
X_k(\omega)=\omega_k \, , \quad \mbox{where} \quad \omega=(\omega_\ell)_{\ell \in \nset}\in \Xset^{\nset}\eqsp,
 $$
and similarly, define for all $k \in \zset$, $\tilde X_k: \Xset^\zset \to \Xset$ by
\begin{equation}
\label{eq:defiTildeXk}
\tilde X_k(\tilde \omega)=\tilde \omega_k\, , \quad \mbox{where}\quad \tilde \omega=(\omega_\ell)_{\ell \in \zset}\in \Xset^{\zset}\eqsp.
\end{equation}
Recall that $(\Omega,\mcf, \PP,\tau)$ is a {\em measure-preserving} dynamical system if $(\Omega,\mcf, \PP)$ is a probability space and $\tau: \Omega \to \Omega$ is measurable such that $\PP \circ \tau^{-1}=\PP$. Moreover, a measure-preserving dynamical system $(\Omega,\mcf, \PP,\tau)$ is said to be {\em ergodic} if for all invariant subset $A \in \mcf$, i.e. $\1_A=\1_A \circ \shift$, we have $\PP(A)=0$ or $1$. Recall that if $\1_B=\1_B \circ \shift\, , \eqsp \PP\as$, then, there exists an invariant set $A$ such that $\1_A=\1_B\,,\eqsp \PP\as$ In the following, $\tau^k: \Omega \to \Omega$ is the mapping $\tau$ iterated $k$ times, that is $\tau^k=\tau \circ \ldots \circ \tau$ and by convention $\tau^0(\omega)=\omega$ for all $\omega \in \Omega$.
\begin{theorem} \label{thm:unicInvMeasImplyErgodic}
Assume that the Markov kernel $P$ has a unique stationary distribution $\pi$. Then, the dynamical system $(\Xset^\nset,\Xsigma^{\otimes \nset}, \PP_\pi,\shift)$ is ergodic.
\end{theorem}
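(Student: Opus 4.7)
The plan is to reduce a shift-invariant set on path space to a $P$-invariant subset of $\Xset$, and then exploit the uniqueness of $\pi$. Fix $A \in \Xsigma^{\otimes \nset}$ with $\1_A = \1_A \circ \shift$; the aim is to show $\PP_\pi(A) \in \{0,1\}$. First I would introduce the bounded measurable map $h : \Xset \to [0,1]$ defined by $h(x) := \PP_{\delta_x}(A)$. Using shift-invariance together with the Markov property,
\[
h(x) = \PE_{\delta_x}[\1_A \circ \shift] = \PE_{\delta_x}[h(X_1)] = Ph(x),
\]
so $h$ is $P$-harmonic. Iterating the same computation and using $\1_A = \1_A \circ \shift^n$ for every $n\geq0$ yields the explicit martingale representation $\PE_\pi[\1_A \mid \mathcal{G}_n] = h(X_n)$ $\PP_\pi\as$, where $\mathcal{G}_n := \sigma(X_0,\ldots,X_n)$.

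Next I would apply L\'evy's upward convergence theorem: $h(X_n) \to \1_A$, $\PP_\pi\as$ Since under $\PP_\pi$ each $X_n$ has law $\pi$, the distribution of $h(X_n)$ is the pushforward $h_\star \pi$ for every $n$, and must therefore coincide with the limiting law $\PP_\pi(A)\delta_1 + (1-\PP_\pi(A))\delta_0$. In particular $h\in\{0,1\}$ $\pi\as$, so the set $B := \{x \in \Xset : h(x) = 1\}$ satisfies $\pi(B) = \PP_\pi(A)$.

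The last step is to turn the harmonicity $h = Ph$ into absorbing properties of $B$ and exploit them through the uniqueness of $\pi$. For each $x \in B$, the identity $1 = Ph(x) = \int P(x, \rmd y)\, h(y)$ combined with $h \leq 1$ forces $h = 1$ $P(x,\cdot)\as$, hence $P(x, B) = 1$; symmetrically $P(x, B) = 0$ for $\pi$-a.e.\ $x \in B^c$. Assuming $\pi(B) \in (0,1)$ for contradiction, a short computation (absorbing the $\pi$-null set $\{h \notin \{0,1\}\}$ into negligible terms) shows that both probability measures $\pi(\cdot \cap B)/\pi(B)$ and $\pi(\cdot \cap B^c)/\pi(B^c)$ are $P$-invariant, and they are mutually singular, hence distinct; this contradicts uniqueness of $\pi$. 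Therefore $\pi(B) \in \{0,1\}$, i.e.\ $\PP_\pi(A) \in \{0,1\}$, which is the desired ergodicity. I expect the main technical care to lie in this last step: converting the $\pi$-a.e.\ absorbing behaviour of $B$ into an exact invariance identity for $\pi(\cdot \cap B)/\pi(B)$ requires a careful book-keeping of the $\pi$-null exceptional set on which $h$ may fail to be $\{0,1\}$-valued.
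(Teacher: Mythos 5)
Your argument is correct and follows essentially the same route as the paper: the martingale $\CPE[\pi]{\1_A}{\sigma(X_0,\ldots,X_n)}=h(X_n)$ with $h(x)=\PP_{\delta_x}(A)$ converges almost surely to $\1_A$, one deduces that $h$ is $\{0,1\}$-valued $\pi$-a.e.\ with $\pi(\{h=1\})=\PP_\pi(A)$, and restricting $\pi$ to $\{h=1\}$ and to its complement produces two mutually singular stationary distributions, contradicting uniqueness. The only (harmless) difference is in the middle step: you verify invariance of the restricted measures through harmonicity of $h$ and the resulting absorption of $B=\{h=1\}$ under $P$, whereas the paper works on path space and uses the identity $\1_A=\1_{\Gamma_A}(X_0)=\1_{\Gamma_A}(X_1)$, $\PP_\pi\as$, to reach the same conclusion.
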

\begin{proof}
Let $A \in \Xsigma^{\otimes \nset}$ be an invariant set for $(\Xset^\nset,\Xsigma^{\otimes \nset}, \PP_\pi,\shift)$, that is:
$\1_A =\1_A \circ \shift$.
We will show that $\PP_\pi(A)=0$ or $1$ by contradiction. Assume indeed that $\PP_\pi(A) \in (0,\,1)$. Using the Markov property and the fact that $A$ is invariant,
$$
\PE_{X_k}(\1_A)=\cesp[\pi]{\1_A \circ \shift^k}{\mcf_k} = \cesp[\pi]{\1_A }{\mcf_k} \,, \quad \PP_\pi\as\
$$
where $\mcf_k=\sigma(X_0,\ldots,X_k)$. Therefore, $\{ (\PE_{X_k}(\1_A),\mcf_k), k \in \nset \}$ is a uniformly integrable martingale.
By  \cite[Corollary 2.2]{hall:heyde:1980}, $\lim_{k \to \infty} \PE_{X_k}(\1_A) = \1_A$, $\PP_\pi$\as\ and $\lim_{k  \to \infty} \PE_{\pi}|\PE_{X_k}(\1_A) -\1_A|=0$. Then,
\begin{multline*}
\PE_\pi(|\1_A-\PE_{X_0}(\1_A)|)=\PE_\pi(|\1_A-\PE_{X_0}(\1_A)|\circ \shift^k)\\
=\PE_\pi(|\1_A-\PE_{X_k}(\1_A)|)=\lim_{k \to \infty}\PE_\pi(|\1_A-\PE_{X_k}(\1_A)|)=0 \eqsp.
\end{multline*}
so that $\1_A = \PP_{X_0}(A)$, $\PP_\pi$\as\ Setting
\begin{equation}
\label{eq:defiGammaA}
\Gamma_A\eqdef\{x \in \Xset, \; \PP_{x}(A)=1\}\eqsp,
\end{equation}
we then obtain $\1_A=\1_{\Gamma_A}(X_0)$, $\PP_\pi$\as\ Combining it with the fact that $A$ is invariant, we get for all $k \in \nset$,
\begin{equation}
\label{eq:1_A=1_Gamma}
\1_A =\1_A \circ \shift^k=\1_{\Gamma_A}(X_0\circ \shift^k)= \1_{\Gamma_A}(X_k)\,, \quad \PP_\pi\as
\end{equation}

Now,  let $\pi_{A}(\cdot)= \alpha^{-1}\pi(\Gamma_A\cap \cdot)$ where $\alpha=\PP_\pi(A)\neq 0$. By definition of $\pi_A$ and by using \eqref{eq:1_A=1_Gamma} with $k=0$ and $k=1$, we get for all $B\in \Xsigma$,
\begin{align*}
\PP_{\pi_A}(X_1\in B)&=\alpha^{-1}\PP_\pi(\{X_1\in B\}\cap \{X_0\in \Gamma_A\})\\
&=\alpha^{-1}\PP_\pi(\{X_1\in B\}\cap \{X_1\in \Gamma_A\}) \\
&=\alpha^{-1}\PP_\pi(X_1\in B\cap \Gamma_A)=\alpha^{-1}\pi(B\cap \Gamma_A)
=\pi_A(B) \eqsp,
\end{align*}
showing that $\pi_A$ is a stationary distribution for the Markov kernel $Q$. Since $A$ is an invariant set, $A^c$ is also an invariant set and thus, $\pi_{A^c}$ is also a stationary distribution for the Markov kernel $Q$. Since by assumption there exists a unique stationary distribution, we have that $\pi_A=\pi_{A^c}$ which is not possible since these probability measures have disjoint supports (indeed by \eqref{eq:defiGammaA}, we have $\Gamma_A\cap \Gamma_{A^c}=\emptyset$).
\end{proof}

\begin{theorem} \label{thm:ergod:N:Z}
Assume that the dynamical system $(\Xset^\nset,\Xsigma^{\otimes \nset}, \PP_\pi,\shift)$ is ergodic. Then, the dynamical system $(\Xset^\zset,\Xsigma^{\otimes \zset}, \tilde \PP_\pi,\tilde \shift)$ is ergodic.
\end{theorem}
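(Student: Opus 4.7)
The plan is to reduce ergodicity of the two-sided system to that of the one-sided system via the projection $p$. Start with an arbitrary $\tilde\shift$-invariant set $\tilde A \in \Xsigma^{\otimes \zset}$, i.e.\ $\tilde\shift^{-1}(\tilde A)=\tilde A$, and aim to prove $\tilde \PP_\pi(\tilde A) \in \{0,1\}$. The natural idea is to exhibit a $\shift$-invariant set $A \subset \Xset^\nset$ with $\tilde A = p^{-1}(A)$, since then $\tilde \PP_\pi(\tilde A) = \PP_\pi(A)$ by \eqref{eq:relationTildePetP}, and ergodicity of $(\Xset^\nset,\Xsigma^{\otimes\nset}, \PP_\pi, \shift)$ finishes the job. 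The obstacle is that a generic $\tilde\shift$-invariant $\tilde A$ may depend essentially on the negative coordinates and therefore not obviously factor through $p$; the heart of the proof is to show that, up to $\tilde\PP_\pi$-null sets, it does.

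The key step is to show that $\1_{\tilde A}$ is $\tilde\PP_\pi$-a.s.\ measurable with respect to $\sigma(\tilde X_n : n \in \nset)$. I would do this by a standard cylinder approximation: given $\epsilon>0$, choose a cylinder $B_\epsilon$ depending only on the coordinates with indices in $\{-m_\epsilon,\dots,m_\epsilon\}$ such that $\tilde\PP_\pi(\tilde A \triangle B_\epsilon) < \epsilon$. Since $\tilde\PP_\pi$ is $\tilde\shift$-invariant and $\tilde\shift^{-m_\epsilon}(\tilde A) = \tilde A$, this yields
\[
\tilde\PP_\pi\bigl(\tilde A \triangle \tilde\shift^{-m_\epsilon}(B_\epsilon)\bigr) < \epsilon,
\]
and $\tilde\shift^{-m_\epsilon}(B_\epsilon)$ depends only on coordinates with indices in $\{0,\dots,2m_\epsilon\}\subset\nset$. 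Letting $\epsilon\to 0$ produces a $\sigma(\tilde X_n : n \in \nset)$-measurable function that equals $\1_{\tilde A}$ almost surely, hence the existence of $A \in \Xsigma^{\otimes\nset}$ with $\tilde A = p^{-1}(A)$ up to a $\tilde\PP_\pi$-null set.

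The final step transfers invariance to $A$. From \eqref{eq:def-shift} and \eqref{eq:defiF} one checks directly that $p\circ \tilde\shift = \shift \circ p$. Combined with $\tilde A = p^{-1}(A)$ $\tilde\PP_\pi$-a.s.\ and $\tilde A = \tilde\shift^{-1}(\tilde A)$, this gives $p^{-1}(A) = p^{-1}(\shift^{-1}(A))$ modulo a $\tilde\PP_\pi$-null set; applying \eqref{eq:relationTildePetP} yields $\PP_\pi(A \triangle \shift^{-1}(A)) = 0$. Replacing $A$ by a strictly $\shift$-invariant set $A'$ with $\1_{A'} = \1_A$ $\PP_\pi$-a.s.\ (a standard fact recalled in the excerpt), ergodicity of the one-sided system gives $\PP_\pi(A') \in \{0,1\}$, whence
\[
\tilde\PP_\pi(\tilde A) \;=\; \PP_\pi(A) \;=\; \PP_\pi(A') \;\in\; \{0,1\}.
\]
The only substantive difficulty is the approximation argument producing one-sided measurability; everything else is bookkeeping using the equivariance $p\circ\tilde\shift = \shift\circ p$ and the compatibility \eqref{eq:relationTildePetP} of the two measures.
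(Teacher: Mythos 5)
Your proposal is correct and follows essentially the same route as the paper: approximate $\1_{\tilde A}$ by a function of coordinates with indices bounded below, use invariance of $\tilde A$ together with $\tilde\shift$-invariance of $\tilde\PP_\pi$ to shift the approximation onto the nonnegative coordinates, pass to the limit to get a $\sigma(\tilde X_n : n\in\nset)$-measurable version, and then transfer to the one-sided system via $p\circ\tilde\shift=\shift\circ p$ and \eqref{eq:relationTildePetP}, finishing with the standard replacement of an a.s.-invariant set by a strictly invariant one. The only cosmetic difference is that you approximate by finite-dimensional cylinder sets while the paper approximates by $\mcf_{-k_\epsilon}$-measurable variables (Gray's approximation lemma); both arguments are otherwise identical.
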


\begin{proof}

Let $A$ be an invariant set for the dynamical system $(\Xset^\zset,\Xsigma^{\otimes \zset}, \tilde \PP_\pi,\tilde \shift)$, that is $\1_A=\1_A \circ \tilde \shift$. We now show that $\tilde \PP_\pi(A) =0$ or $1$.

 Note first that $\Xsigma^{\otimes \zset}=\sigma(\mcf_{-k}\, , \, k \in \nset)$ where $\mcf_\ell=\sigma(\tilde X_i\,,\, \ell\leq i <\infty)$ and $\tilde X_i$ is defined in \eqref{eq:defiTildeXk}. This allows to apply the approximation Lemma (see for example \cite[Corollary 1.5.3]{gray:2009}) showing that for all $\epsilon>0$, there exists $k_\epsilon \in\nset$ and a $\mcf_{-k_\epsilon}$-measurable random variable $Z_\epsilon$ such that $\tilde \PE_\pi(|Z_\epsilon|)<\infty$ and
$\tilde \PE_\pi|\1_A-Z_\epsilon|\leq \epsilon$. Then, setting $Y_\epsilon=Z_\epsilon \circ \tilde \shift^{k_\epsilon} \in \mcf_0$ and using that $A$ is an invariant set, we obtain
$$
\tilde \PE_\pi|\1_A-Y_\epsilon|=\tilde \PE_\pi|\1_A \circ \tilde \shift^{k}-Z_\epsilon\circ \tilde \shift^{k}|=\tilde \PE_\pi|\1_A-Z_\epsilon|\leq \epsilon \eqsp.
$$
 The positive real number $\epsilon$ being arbitrary, there exists $Y$ such that $\tilde \PE_\pi|Y|<\infty$ and $\1_A=Y$, $\tilde \PP_\pi$\as\ which implies that $1=\tilde \PP_\pi (\1_A=Y)\leq \tilde \PP_\pi(Y \in \{0,1\})\leq 1$. Thus, there exists $B \in \mcf_0$ such that \begin{equation}
  \label{eq:BandA}
  \1_B=Y=\1_A\,, \quad \tilde \PP_\pi\as
  \end{equation}
 Eq.~\eqref{eq:BandA} and the invariance of $A$ then shows that
 $$
 \tilde \PP_\pi(\1_B \circ \tilde \shift=\1_A \circ \tilde \shift=\1_A=\1_B)=1\eqsp.
 $$

Now, note that $\mcf_0=\sigma(p)$ where $p$ is defined in \eqref{eq:defiF}. Then, since $B \in \mcf_0$, there exists $C \in \Xsigma^{\otimes \nset}$ such that $B=p^{-1}(C)$ and thus,
\begin{align*}
1&=\tilde \PP_\pi(\1_B=\1_B\circ \tilde \shift)=\tilde \PP_\pi(\1\{p(\cdot) \in C\}=\1\{p\circ \tilde \shift(\cdot) \in C\})\\&=\tilde \PP_\pi(\1_C \circ p=\1_C\circ p \circ \tilde \shift)\\
&\stackrel{(i)}{=}\tilde \PP_\pi(\1_C \circ p=\1_C\circ \shift \circ p)=\tilde \PP_\pi \circ p^{-1}(\1_C =\1_C\circ \shift)\stackrel{(ii)}{=}\PP_\pi(\1_C =\1_C\circ \shift)\eqsp,
\end{align*}
where $\stackrel{(i)}{=}$ follows from $p \circ \tilde \shift=\shift \circ p$ and $\stackrel{(ii)}{=}$ from $\PP_\pi=\tilde \PP_\pi \circ p^{-1}$ (see \eqref{eq:relationTildePetP}). The dynamical system $(\Xset^\nset,\Xsigma^{\otimes \nset}, \PP_\pi,\shift)$ being ergodic, it implies that $\PP_\pi(C) =0$ or $1$ which concludes the proof since
$$
\PP_\pi(C)=\tilde \PP_\pi \circ p^{-1}(C)=\tilde \PP_\pi(B)=\tilde \PP_\pi(A)\eqsp.
$$
\end{proof}
\begin{proposition}\label{prop:ergo-unique}
Let  $(\Xset^\zset,\Xsigma^{\otimes \zset}, \PP,\shift)$  be a measure-preserving dynamical system. Then, the following statements are equivalent:
    \begin{enumerate}[(a)]
    \item \label{item:cns-ergo-general} $(\Xset^\zset,\Xsigma^{\otimes \zset}, \PP,\shift)$  is ergodic.
    \item \label{item:cns-birkhoff} for all measurable function $h: \Xset^{\zset} \to \rset$ satisfying $\PE(h_+)<\infty$,
\begin{equation}\label{eq:birkhoff-dyn}
 n^{-1} \sum_{k=0}^{n-1} h \circ \shift^k \existLim{n \to \infty} \PE(h),\quad \PP\as
\end{equation}
    \end{enumerate}
\end{proposition}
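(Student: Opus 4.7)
The plan is to establish the two implications separately, with both directions being essentially immediate consequences of Birkhoff's pointwise ergodic theorem on the one hand and the definition of ergodicity on the other.

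For the implication \eqref{item:cns-ergo-general} $\Rightarrow$ \eqref{item:cns-birkhoff}, I would invoke the general form of Birkhoff's pointwise ergodic theorem (see, for example, Krengel's \emph{Ergodic Theorems}) which asserts that on any measure-preserving system, if $h$ is a measurable function with $\PE(h_+)<\infty$, then the Ces\`aro averages $n^{-1}\sum_{k=0}^{n-1} h\circ \shift^k$ converge $\PP\as$ to $\PE[h\mid\mathcal I]$, where $\mathcal I\subseteq \Xsigma^{\otimes \zset}$ is the sub-$\sigma$-field of shift-invariant sets. When the system is ergodic, every $A\in\mathcal I$ satisfies $\PP(A)\in\{0,1\}$, so $\mathcal I$ is $\PP$-trivial and $\PE[h\mid\mathcal I]=\PE(h)$ $\PP\as$ (well-defined in $[-\infty,+\infty)$ since $\PE(h_+)<\infty$). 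Plugging this into the Birkhoff conclusion gives \eqref{eq:birkhoff-dyn}.

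For the converse \eqref{item:cns-birkhoff} $\Rightarrow$ \eqref{item:cns-ergo-general}, I would argue as follows. Let $A\in \Xsigma^{\otimes \zset}$ be an invariant set, so that $\1_A=\1_A\circ \shift$. Applying the hypothesis to $h=\1_A$ (for which $\PE(h_+)=\PP(A)\le 1 <\infty$) yields
\begin{equation*}
n^{-1}\sum_{k=0}^{n-1} \1_A\circ \shift^k \existLim{n \to \infty} \PP(A)\eqsp,\quad \PP\as
\end{equation*}
By invariance, every summand on the left equals $\1_A$, so the average itself equals $\1_A$ for every $n$. Hence $\1_A=\PP(A)$ $\PP\as$, which forces $\PP(A)\in\{0,1\}$, establishing ergodicity.

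Neither direction presents a genuine obstacle; the only point to be careful about is the statement of Birkhoff's theorem under the one-sided moment condition $\PE(h_+)<\infty$ rather than $h\in\mathrm{L}^1$, since $\PE(h)$ is then possibly $-\infty$. This is the standard extension obtained by truncating $h$ from above at level $M$, applying the $\mathrm{L}^1$ form of Birkhoff to $h\wedge M$, and then letting $M\to\infty$ using monotone convergence on the conditional expectations; this is the only non-completely-trivial step and would be invoked rather than reproved.
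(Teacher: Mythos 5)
Your converse direction is exactly the paper's argument: apply \eqref{eq:birkhoff-dyn} to $h=\1_A$ for an invariant set $A$, use $\1_A\circ\shift^k=\1_A$ to get $\1_A=\PP(A)$ $\PP$-a.s., hence $\PP(A)\in\{0,1\}$. For the direct implication you take a genuinely different (and shorter) route: you invoke the one-sided Birkhoff theorem, valid whenever $\PE(h_+)<\infty$, giving a.s.\ convergence of the averages to $\PE[h\mid\mathcal I]$ with $\mathcal I$ the invariant $\sigma$-field, and then use triviality of $\mathcal I$ under ergodicity. The paper instead stays self-contained on top of the usual $\mathrm{L}^1$ ergodic theorem: it splits into the cases $\PE(h_-)<\infty$ (where Birkhoff applies directly) and $\PE(h_-)=\infty$ (where $\PE(h)=-\infty$), and in the latter case bounds $\limsup_n n^{-1}\sum_{k=0}^{n-1}h\circ\shift^k$ by the ergodic average of the integrable truncation $h\1\{h>-M\}$, letting $M\to\infty$ via monotone convergence. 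Your route buys brevity at the price of citing the less commonly stated one-sided form of the theorem; the paper's buys self-containedness.

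One caveat: your parenthetical sketch of how the one-sided extension would be proved has the truncation going the wrong way. When $\PE(h_-)=\infty$, the function $h\wedge M$ still has a non-integrable negative part, so the $\mathrm{L}^1$ form of Birkhoff does not apply to it. The correct move is to truncate from below, e.g.\ with $h\vee(-M)$ or $h\1\{h>-M\}$ as in the paper, bound the averages of $h$ from above by those of the truncation, and then let $M\to\infty$. Since you explicitly invoke rather than reprove the extension, this does not invalidate your proof, but the sketch as written would not work.
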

\begin{proof}
We first show that \eqref{item:cns-ergo-general} implies \eqref{item:cns-birkhoff}.
Assume that $\PE(h_+)<\infty$. If $\PE(h_-)<\infty$, then, \eqref{eq:birkhoff-dyn} follows from Birkhoff's ergodic theorem. If $\PE(h_-)=\infty$, then $\PE(h)=-\infty$. Moreover, since for all nonnegative real number $M$,
$$
-M \leq h\1\{h>-M\} \leq h_+ \eqsp,
$$
the monotone convergence theorem applied to the nondecreasing and nonnegative function, $h_+ - h\1\{h>-M\}$ yields
$$
\lim_{M \to \infty} \PE(h\1\{h>-M\}) =\PE(\lim_{M \to \infty}h\1\{h>-M\})=\PE(h)=-\infty, \quad \PP\as
$$
so that $\PE(h\1\{h>-M\})\existLim{M \to \infty} -\infty$. The proof follows from
\begin{multline*}
\limsup_{n \to \infty} n^{-1} \sum_{k=0}^{n-1} h \circ \shift^k \\
\leq \limsup_{n \to \infty} n^{-1} \sum_{k=0}^{n-1} h \circ \shift^k \1(h \circ \shift^k>-M) =\PE(h\1\{h>-M\})\eqsp,
\end{multline*}
by letting $M$ goes to infinity. Conversely, assume \eqref{item:cns-birkhoff}. Let $A \in \Xsigma^{\otimes \zset}$ such that $\1_A \circ \shift=\1_A$. Then,
$$
 n^{-1} \sum_{k=0}^{n-1} \1_A \circ \shift^k \existLim{n \to \infty} \PP(A),\quad \PP\as
$$
which implies, since $\PP\as$,  $\1_A \circ \shift^k=\1_A$,
$$
\1_A=\PP(A),\quad \PP\as
$$
Since $\1_A$ takes value in $\{0,1\}$, then necessarily $\PP(A)=0$ or $1$. The proof is concluded.
\end{proof}

\section{Consistency of Max-estimators using stationary approximations}
\label{sec:consist-max-estim}
Let $\Xset$ be a Polish space equipped with its Borel sigma-field $\Xsigma$ and let $\shift$  the shift operator as defined in \eqref{eq:def-shift}. Assume that $(\Xset^{\zset}, \Xsigma^{\otimes \zset},\PP, \shift )$ is a measure-preserving ergodic dynamical system. Denote by $\PE$ the expectation operator associated to $\PP$.

Let $(\lkdStat{}[\theta]\, , \, \theta \in \Theta)$ be a family of measurable functions $\lkdStat{}[\theta]: \Xset^{\zset} \rightarrow \rset$, indexed by $\theta \in \Theta$ where $(\Theta,\mathsf{d})$ is a compact metric space and denote $\lkdMStat[n]{}[\theta]\eqdef n^{-1} \sum_{k=0}^{n-1} \lkdStat{}[\theta] \circ \shift^k$. Moreover, consider $(\lkdM[n]{}[\theta]\, , \, n \in \nset^*, \, \theta \in \Theta)$ a family of upper-semicontinuous functions $\lkdM[n]{}[\theta]: \Xset^{\zset} \rightarrow \rset$ indexed by $n \in \nset^*$ and $\theta \in \Theta$.
Consider the following assumptions:
\begin{hyp}{C} \label{assum:momentStat}
$\PE\left(\sup_{\theta \in \Theta} \lkdStat[+]{}[\theta]\right)<\infty$,
\end{hyp}

\begin{hyp}{C} \label{assum:continuity}
$\PP\as$, the function $\theta \mapsto \lkdStat{}[\theta]$ is upper-semicontinuous,
\end{hyp}

\begin{hyp}{C} \label{assum:approx-lkd-stat}
$\lim_{n \to \infty} \sup_{\theta \in \Theta} |\lkdM[n]{}[\theta]-\lkdMStat[n]{}[\theta] |=0\, ,\quad \PP\as$
\end{hyp}
Let $\set{\mlStat{n}}{n \in \nset^*} \subset \Theta$ and $\set{\ml{n}}{n \in \nset^*}\subset \Theta$ such that for all $n\geq 1$,
$$
\mlStat{n}\in  \argmax_{\theta \in \Theta} \lkdMStat[n]{}[\theta]\,, \quad \ml{n}\in \argmax_{\theta \in \Theta} \lkdM[n]{}[\theta]\eqsp.
$$
Assumptions \refhyps[C]{assum:momentStat}{assum:continuity} are quite standard and can be adapted directly from \cite{pfanzagl:1969} (which treated the case of independent \sequence{X}[n][\nset]). For the sake of clarity, we provide here a short and self-contained proof.
\begin{theorem} \label{thm:consist-appendix} Assume \refhyps[C]{assum:momentStat}{assum:continuity}.
\begin{enumerate}[(i)]
\item \label{item:consist-stat-ergo}Then,
$\lim_{n \to \infty} \mathsf{d}(\mlStat{n},\Theta_\star)=0,\eqsp \PP\as$
where $\Theta_\star \eqdef \argmax_{\theta \in \Theta} \PE(\lkdStat{}[\theta])$.
\item \label{item:approx-lkd-stat} Assume in addition that \refhyp[C]{assum:approx-lkd-stat} holds. Then, $\lim_{n \to \infty} \mathsf{d}(\ml{n},\Theta_\star) =0\, ,\eqsp \PP\as$ Moreover,
\begin{align} \label{eq:lim-theta-n}
&\lim_{n \to \infty} \lkdM[n]{}[\ml{n}]=\sup_{\theta \in \Theta} \PE(\lkdStat{}[\theta]) \, , \quad \PP\as\\
\forall \theta \in \Theta,\quad &\lim_{n \to \infty} \lkdM[n]{}[\theta]=\PE(\lkdStat{}[\theta]) \, , \quad \PP\as \label{eq:limite-vrais-theta-fixe}
\end{align}
\end{enumerate}
\end{theorem}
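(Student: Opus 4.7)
The plan is to follow the classical Wald-style argument for M-estimators, decomposing the proof into the convergence of $\mlStat{n}$ (part (i)) and then transferring to $\ml{n}$ via the uniform approximation (part (ii)). The essential ingredient is Birkhoff's ergodic theorem, which is exactly what Proposition~\ref{prop:ergo-unique}-\eqref{item:cns-birkhoff} provides under the integrable-positive-part condition.

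For part~\eqref{item:consist-stat-ergo}, I would first localize by introducing, for $\theta_0 \in \Theta$ and $\rho>0$, the random variable $g_{\theta_0,\rho}\eqdef \sup_{\theta \in \ball{\theta_0}{\rho}\cap \Theta} \lkdStat{}[\theta]$. Since $\theta \mapsto \lkdStat{}[\theta]$ is upper-semicontinuous $\PP\as$ by \refhyp[C]{assum:continuity}, the family $g_{\theta_0,\rho}$ decreases to $\lkdStat{}[\theta_0]$ as $\rho \searrow 0$, and the envelope $\sup_\theta \lkdStat[+]{}[\theta]$ is integrable by \refhyp[C]{assum:momentStat}. Monotone convergence therefore yields $\PE[g_{\theta_0,\rho}] \searrow \PE[\lkdStat{}[\theta_0]]$ in $[-\infty,\infty)$. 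Applying Proposition~\ref{prop:ergo-unique} to $h = g_{\theta_0,\rho}$ (whose positive part is integrable), I obtain
\begin{equation*}
\limsup_{n \to \infty} \sup_{\theta \in \ball{\theta_0}{\rho}} \lkdMStat[n]{}[\theta] \leq \lim_{n \to \infty} n^{-1}\sum_{k=0}^{n-1} g_{\theta_0,\rho} \circ \shift^k = \PE[g_{\theta_0,\rho}]\, , \quad \PP\as
\end{equation*}

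Next, I would observe that $\theta \mapsto \PE[\lkdStat{}[\theta]]$ is itself upper-semicontinuous on $\Theta$ (by reverse Fatou applied using the same integrable envelope), so by compactness of $\Theta$ the set $\Theta_\star$ is non-empty and $M_\star \eqdef \sup_\theta \PE[\lkdStat{}[\theta]]$ is attained. For any open neighborhood $V \supseteq \Theta_\star$ and any $\theta_0 \in V^c \cap \Theta$, one has $\PE[\lkdStat{}[\theta_0]] < M_\star$, hence $\rho(\theta_0)$ can be chosen so that $\PE[g_{\theta_0,\rho(\theta_0)}] < M_\star$. A finite subcover of the compact $V^c \cap \Theta$ by the balls $\ball{\theta_0}{\rho(\theta_0)}$ then yields $\limsup_n \sup_{\theta \in V^c \cap \Theta} \lkdMStat[n]{}[\theta] < M_\star$ $\PP\as$ Meanwhile, picking any $\theta^\star \in \Theta_\star$, the ergodic theorem gives $\lkdMStat[n]{}[\theta^\star] \to M_\star$, so $\sup_\theta \lkdMStat[n]{}[\theta] \to M_\star$ and $\mlStat{n}$ must eventually lie in $V$.

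For part~\eqref{item:approx-lkd-stat}, the uniform approximation \refhyp[C]{assum:approx-lkd-stat} implies $\sup_\theta |\lkdM[n]{}[\theta] - \lkdMStat[n]{}[\theta]| \to 0$, which preserves both the upper bound on $V^c$ and the convergence $\sup_\theta \lkdM[n]{}[\theta] \to M_\star$, giving $\mathsf{d}(\ml{n},\Theta_\star) \to 0$ and \eqref{eq:lim-theta-n} at once; equation \eqref{eq:limite-vrais-theta-fixe} is immediate from \refhyp[C]{assum:approx-lkd-stat} combined with the ergodic theorem applied to $\lkdStat{}[\theta]$ itself. The main obstacle is the first paragraph: correctly handling the monotone convergence when $\PE[\lkdStat{}[\theta_0]]$ may be $-\infty$ and ensuring $\Theta_\star \neq \emptyset$ even in that degenerate case; once the localization lemma $\PE[g_{\theta_0,\rho}] \searrow \PE[\lkdStat{}[\theta_0]]$ is established, the remainder is a standard compactness argument.
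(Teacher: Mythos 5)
Your proposal is correct and takes essentially the same route as the paper's proof: localize the stationary criterion via suprema over balls dominated by the integrable envelope of \refhyp[C]{assum:momentStat}, apply the generalized Birkhoff theorem (\autoref{prop:ergo-unique}) to these envelopes, use monotone convergence and upper-semicontinuity to identify the limit and to get upper-semicontinuity of $\theta \mapsto \PE(\lkdStat{}[\theta])$, conclude (i) by a finite-subcover compactness argument, and transfer to $\ml{n}$ through \refhyp[C]{assum:approx-lkd-stat}. The one point to make explicit is that your claim $\sup_{\theta\in\Theta}\lkdMStat[n]{}[\theta]\to M_\star$ (needed for \eqref{eq:lim-theta-n}) requires the finite-cover bound over \emph{all} of $\Theta$, not only over $V^c\cap\Theta$; this follows immediately from your localization lemma applied to a cover of $\Theta$ (choosing $\rho$ at points of $\Theta_\star$ so that $\PE[g_{\theta_0,\rho}]\leq M_\star+\eta$), exactly as the paper does by establishing \eqref{eq:first-ineg-consist-stat} for an arbitrary compact $K$ and taking $K=\Theta$.
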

\begin{proof}
\begin{subproof}{Proof of \eqref{item:consist-stat-ergo}}
First note that according to  Proposition \ref{prop:ergo-unique} and \refhyp[C]{assum:momentStat}, for all $\theta \in \Theta$, $\lim_{n \to \infty} \lkdMStat[n]{}[\theta]$ exists $\PP\as$, and
\begin{equation}\label{eq:conv-vraisStat-theta}
\lim_{n \to \infty} \lkdMStat[n]{}[\theta]= \lim_{n \to \infty}  n^{-1} \sum_{k=0}^{n-1} \lkdStat{}[\theta] \circ \shift^k=\PE(\lkdStat{}[\theta]),\quad \PP\as
\end{equation}

Let $K$ be a compact subset of $\Theta$. For all $\theta_0 \in K$, $\PP\as$,
\begin{align}
&\limsup_{\rho \to 0} \limsup_{n \to \infty} \sup_{\theta \in \ball{\theta_0}{\rho}} n^{-1} \sum_{k=0}^{n-1}\lkdStat{}[\theta] \circ \shift^k  \nonumber \\
&\quad \leq \limsup_{\rho \to 0} \limsup_{n \to \infty} n^{-1} \sum_{k=0}^{n-1} \sup_{\theta \in \ball{\theta_0}{\rho}} \lkdStat{}[\theta] \circ \shift^k
= \limsup_{\rho \to 0} \PE\left(\sup_{\theta \in \ball{\theta_0}{\rho}} \lkdStat{}[\theta]\right)\eqsp, \label{eq:proof-consist-stat-0}
\end{align}
where the last equality follows from \refhyp[C]{assum:momentStat} and Proposition \ref{prop:ergo-unique}. Moreover, by the monotone convergence theorem applied to the nonincreasing function $\rho\mapsto \sup_{\theta \in \ball{\theta_0}{\rho}} \lkdStat{}[\theta]$, we have
\begin{align}
\limsup_{\rho \to 0} \PE\left(\sup_{\theta \in \ball{\theta_0}{\rho}} \lkdStat{}[\theta]\right) =  \PE\left(\limsup_{\rho \to 0} \sup_{\theta \in \ball{\theta_0}{\rho}} \lkdStat{}[\theta]\right) \leq \PE(\lkdStat{}[\theta_0])\eqsp, \label{eq:proof-consist-stat-1}
\end{align}
where the last inequality follows from \refhyp[C]{assum:continuity}. Combining \eqref{eq:proof-consist-stat-0} and \eqref{eq:proof-consist-stat-1}, we obtain that for all $\eta>0$ and $\theta_0 \in K$, there exists $\rho^{\theta_0}>0$ satisfying
$$
\limsup_{n \to \infty} \sup_{\theta \in \ball{\theta_0}{\rho^{\theta_0}}} n^{-1} \sum_{k=0}^{n-1}\lkdStat{}[\theta] \circ \shift^k \leq \PE(\lkdStat{}[\theta_0])+\eta \leq \sup_{\theta \in K}\PE(\lkdStat{}[\theta])+\eta \, , \quad \PP\as
$$
Since $K$ is a compact subset of $\Theta$, we can extract a finite subcover of $K$ from $\bigcup_{\theta_0 \in K} \ball{\theta_0}{\rho^{\theta_0}}$, so that
\begin{equation}
\limsup_{n \to \infty} \sup_{\theta \in K} n^{-1} \sum_{k=0}^{n-1}\lkdStat{}[\theta] \circ \shift^k \leq \sup_{\theta \in K}\PE(\lkdStat{}[\theta])+\eta \, , \quad \PP\as
\end{equation}
Since $\eta$ is arbitrary, we obtain
\begin{equation}\label{eq:first-ineg-consist-stat}
\limsup_{n \to \infty} \sup_{\theta \in K} n^{-1} \sum_{k=0}^{n-1}\lkdStat{}[\theta] \circ \shift^k \leq \sup_{\theta \in K}\PE(\lkdStat{}[\theta]) \, , \quad \PP\as
\end{equation}
Moreover, $\PP\as$, by \eqref{eq:proof-consist-stat-1}, we get
\begin{equation*}
\limsup_{\rho \to 0} \sup_{\theta \in \ball{\theta_0}{\rho}} \PE\left( \lkdStat{}[\theta]\right)\leq \limsup_{\rho \to 0} \PE\left(\sup_{\theta \in \ball{\theta_0}{\rho}} \lkdStat{}[\theta]\right)\leq \PE(\lkdStat{}[\theta_0])\eqsp,
\end{equation*}
This shows that $\theta \mapsto \PE( \lkdStat{}[\theta])$ is upper-semicontinuous. As a consequence,  $\Theta_\star\eqdef \argmax_{\theta \in \Theta} \PE(\lkdStat{}[\theta])$ is a closed and nonempty subset of $\Theta$ and therefore, for all $\epsilon>0$, $K_\epsilon \eqdef \{\theta \in \Theta; \mathsf{d}(\theta,\Theta_\star)\geq \epsilon\}$ is a compact subset of $\Theta$. Using again the upper-semicontinuity of $\theta \mapsto \PE( \lkdStat{}[\theta])$, there exists $\theta_\epsilon \in K_\epsilon$ such that for all $\thv \in \Theta_\star$,
$$
\sup_{\theta \in K_\epsilon}\PE(\lkdStat{}[\theta])=\PE( \lkdStat{}[\theta_\epsilon])<\PE( \lkdStat{}[\thv])\eqsp.
$$
Finally, combining this inequality with \eqref{eq:first-ineg-consist-stat}, we obtain that $\PP\as$,
\begin{multline}
\limsup_{n \to \infty} \sup_{\theta \in K_\epsilon}\lkdMStat[n]{}[\theta]=\limsup_{n \to \infty} \sup_{\theta \in K_\epsilon} n^{-1} \sum_{k=0}^{n-1}\lkdStat{}[\theta] \circ \shift^k
\leq \sup_{\theta \in K_\epsilon}\PE(\lkdStat{}[\theta])\\
<\PE( \lkdStat{}[\thv])\stackrel{(1)}{=} \lim_{n \to \infty}  \lkdMStat[n]{}[\thv]
\leq \liminf_{n \to \infty}  \lkdMStat[n]{}[\mlStat{n}]\eqsp, \label{eq:second-proof-consist}
\end{multline}
where $(1)$ follows from \eqref{eq:conv-vraisStat-theta}. This inequality ensures that $\mlStat{n} \notin K_\epsilon$ for all $n$ larger to some $\PP\as$ finite integer-valued random variable. This completes the proof of (i)  since $\epsilon$ is arbitrary.
\end{subproof}
\begin{subproof}{Proof of \eqref{item:approx-lkd-stat}}
First note that \eqref{eq:limite-vrais-theta-fixe} follows from \eqref{eq:conv-vraisStat-theta} and \refhyp[C]{assum:approx-lkd-stat}.

Let $\thv$ be any point in $\Theta_\star$. Then, $\PP\as$,
\begin{multline*}
\PE(\lkdStat{}[\thv])\stackrel{(1)}{=}\liminf_{n\to\infty}\lkdMStat[n]{}[\thv] \stackrel{(2)}{\leq} \liminf_{n\to\infty}\lkdMStat[n]{}[\mlStat{n}] \leq \limsup_{n\to\infty}\lkdMStat[n]{}[\mlStat{n}]\\
=\limsup_{n \to \infty} \sup_{\theta \in \Theta}\lkdMStat[n]{}[\theta] \stackrel{(3)}{\leq}  \sup_{\theta \in \Theta}\PE(\lkdStat{}[\theta])=\PE(\lkdStat{}[\thv])\eqsp,
\end{multline*}
where $(1)$ follows from \eqref{eq:conv-vraisStat-theta}, $(2)$ is direct from the definition of $\mlStat{n}$ and $(3)$ is obtained by applying \eqref{eq:first-ineg-consist-stat} with  $K=\Theta$. Thus,
\begin{equation}\label{eq:third-proof-consist}
\lkdMStat[n]{}[\mlStat{n}]\existLim{n\to\infty}\PE(\lkdStat{}[\thv]),\quad \PP\as
\end{equation}
Denote $\delta_n\eqdef \sup_{\theta \in \Theta} |\lkdM[n]{}[\theta]-\lkdMStat[n]{}[\theta] |$. We get
\begin{equation}\label{eq:sandwich}
\lkdMStat[n]{}[\mlStat{n}]-\delta_n \stackrel{(1)}{\leq} \lkdM[n]{}[\mlStat{n}] \stackrel{(2)}{\leq}  \lkdM[n]{}[\ml{n}] \stackrel{(1)}{\leq}  \lkdMStat[n]{}[\ml{n}] +\delta_n \stackrel{(3)}{\leq}  \lkdMStat[n]{}[\mlStat{n}] +\delta_n\eqsp.
\end{equation}
where $(1)$ follows from the definition of $\delta_n$, $(2)$ from the definition of $\ml{n}$ and $(3)$ from the definition of $\mlStat{n}$. Combining the above inequalities with \eqref{eq:third-proof-consist} and \refhyp[C]{assum:approx-lkd-stat} yields \eqref{eq:lim-theta-n}. \eqref{eq:sandwich} also implies that
$$
\lkdMStat[n]{}[\ml{n}]\existLim{n \to \infty}\PE(\lkdStat{}[\thv]),\quad \PP\as
$$
which yields, using \eqref{eq:second-proof-consist},
$$
\limsup_{n \to \infty} \sup_{\theta \in K_\epsilon}\lkdMStat[n]{}[\theta] <\liminf_{n \to \infty}\lkdMStat[n]{}[\ml{n}]=\limsup_{n \to \infty}\lkdMStat[n]{}[\ml{n}]=\PE( \lkdStat{}[\thv])\,,\quad \PP\as
$$
where $K_\epsilon \eqdef \{\theta \in \Theta; \mathsf{d}(\theta,\Theta_\star)\geq \epsilon\}$. Therefore, $\ml{n} \notin K_\epsilon$ for all $n$ larger to some $\PP\as$-finite integer-valued random variable. The proof is completed since $\epsilon$ is arbitrary.
\end{subproof}
\end{proof}

\begin{lemma} \label{lem:useful}
Let \sequence{V}[n][\nset] be a sequence of strict-sense stationary random variables on the same probability space $(\Omega, \mcf, \PP)$. Denote by $\PE$ the associated expectation operator and assume that $\PE{[(\ln |V_0|)_+]}<\infty$. Then, for all $\eta \in (0,1)$,
$$
\lim_{k \to \infty} \eta^k V_k =0\, , \quad \PP\as
$$
\end{lemma}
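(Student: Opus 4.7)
The plan is to reduce the almost-sure decay of $\eta^k V_k$ to a bound on the growth rate of $\ln^+|V_k|$, then apply Borel--Cantelli using stationarity and the assumed integrability of $(\ln|V_0|)_+$.

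First, I would fix an arbitrary $\epsilon>0$ and write, by stationarity,
\[
\sum_{k=0}^\infty \PP\bigl((\ln|V_k|)_+ > \epsilon k\bigr)=\sum_{k=0}^\infty \PP\bigl((\ln|V_0|)_+ > \epsilon k\bigr).
\]
Using the standard layer-cake bound $\sum_{k\ge 0}\PP(W>\epsilon k)\le 1+\epsilon^{-1}\PE[W]$ for any nonnegative $W$, applied to $W=(\ln|V_0|)_+$, the right-hand side is bounded by $1+\epsilon^{-1}\PE[(\ln|V_0|)_+]<\infty$ by hypothesis. The Borel--Cantelli lemma then gives $\PP\bigl((\ln|V_k|)_+>\epsilon k\text{ i.o.}\bigr)=0$, so $\limsup_{k\to\infty}(\ln|V_k|)_+/k\le \epsilon$, $\PP\as$ Intersecting over a countable sequence $\epsilon\downarrow 0$ yields
\[
\limsup_{k\to\infty}\frac{(\ln|V_k|)_+}{k}=0,\qquad \PP\as
\]

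Now fix $\eta\in(0,1)$ and set $\delta\eqdef -\ln\eta/2 > 0$. On the almost-sure event above, there exists a (random) integer $k_0$ such that $(\ln|V_k|)_+\le \delta k$ for all $k\ge k_0$. Writing $\ln|\eta^k V_k|=k\ln\eta+\ln|V_k|\le k\ln\eta+(\ln|V_k|)_+$, we get for all $k\ge k_0$,
\[
\ln|\eta^k V_k|\le k\ln\eta+\delta k=\tfrac{k\ln\eta}{2}\longrightarrow -\infty,
\]
which implies $\eta^k V_k\to 0$, $\PP\as$

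There is no real obstacle here: stationarity and integrability of $(\ln|V_0|)_+$ combine cleanly through Borel--Cantelli. The only mild point to handle is the possibility that $V_k=0$ (in which case $\ln|V_k|=-\infty$ is harmless in the inequality above, since I only bound $\ln|V_k|$ from above by its positive part), and that ergodicity is not assumed -- but it is not needed, because Borel--Cantelli relies solely on the marginal distribution of each $V_k$, which is fixed by stationarity.
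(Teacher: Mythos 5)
Your proof is correct and follows essentially the same route as the paper: stationarity reduces everything to the marginal law of $V_0$, finiteness of $\PE[(\ln|V_0|)_+]$ makes the relevant tail probabilities summable, and Borel--Cantelli concludes. The only cosmetic difference is that you first extract the intermediate statement $\limsup_k (\ln|V_k|)_+/k=0$ a.s.\ and then handle each $\eta$ deterministically, whereas the paper applies Borel--Cantelli directly to the events $\{\eta^k|V_k|\geq\epsilon\}$; the ingredients and the logic are the same.
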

\begin{proof}
Let $\eta \in (0,1)$. For all $\epsilon>0$,
\begin{align*}
&\sum_{k=1}^\infty\PP(\eta^k |V_k|\geq \epsilon)=\sum_{k=1}^\infty \PP(\ln |V_0|-\ln \epsilon \geq -k\ln \eta )\\
&\quad \leq \sum_{k=1}^\infty \PP\left( \frac{(\ln(V_0))_+-\ln \epsilon}{-\ln \eta}\geq k\right)<\infty\eqsp,
\end{align*}
where the last inequality follows from $\PE{[(\ln |V_0|)_+]}<\infty$. The proof follows by applying the Borel-Cantelli lemma.
\end{proof}

\section*{References}
\bibliographystyle{elsarticle-harv}

\end{document}